\def\rit{{\Bbb R}}
\def\cit{{\Bbb C}}
\def\tit{{\Bbb T}}
\def\eps{\varepsilon}
\def\Remark{{\noindent \it Remark: }}
\def\Remarks{{\noindent \it Remarks: }}
\newtheorem{theorem}{Theorem}[section]
\newtheorem{lemma}[theorem]{Lemma}
\newtheorem{e-proposition}[theorem]{Proposition}
\newtheorem{e-definition}[theorem]{Definition\rm}
\newtheorem{theoreme}{Th\'eor\`eme}[section]
\newtheorem{proposition}[theoreme]{Proposition}
\def\og{\leavevmode\raise.3ex\hbox{$\scriptscriptstyle\langle\!\langle$~}}
\def\fg{\leavevmode\raise.3ex\hbox{~$\!\scriptscriptstyle\,\rangle\!\rangle$}}
\def\beq{\begin{equation}}
\def\eeq{\end{equation}}
\begin{document}

\centerline{\Large \bf Asymptotic behavior of solutions }

\bigskip

\centerline{\Large \bf of the linearized Euler  equations near a shear layer}

\bigskip

\centerline{D. Bian\footnote{School of Mathematics and Statistics, Beijing Institute of Technology, $100081$ Beijing, China. Email: 
biandongfen@bit.edu.cn}, 
E. Grenier\footnote{Unit\'e de Math\'ematiques Pures et Appliqu\'ees, CNRS  UMR $5669$, Ecole Normale Sup\'erieure de Lyon,
$46$ all\'ee d'Italie, $69007$ Lyon, France. Email: Emmanuel.Grenier@ens-lyon.fr}}



\subsubsection*{Abstract}


In this article, thanks to a  new and detailed study of the Green's function of Rayleigh equation 
near the extrema of the velocity of a shear layer, 
we obtain optimal bounds on the asymptotic behaviour of solutions to the linearized incompressible Euler equations 
both in the whole plane, the half plane and the periodic case,
and improve the description of the so called "vorticity depletion property" 
discovered by F. Bouchet and H. Morita \cite{Bouchet-Morita-2010} by putting into light
a localization property of the solutions of Rayleigh equation near an extremal velocity.


\section{Introduction}


In this paper we focus on the asymptotic behaviour of  the solutions of the linearized incompressible Euler equations 
near a shear layer $U(y)$. More precisely we study the large time behaviour of solutions $v(t,x)$ to
\beq \label{Eulerlin1} 
\partial_t v + (U \cdot \nabla) v + (v \cdot \nabla) U  + \nabla p = 0,
\eeq
\beq \label{Eulerlin2}
\nabla \cdot v = 0 .
\eeq
Though similar results are valid in the whole space and in the periodic case, and can be obtained using the same ideas, 
we focus on the half plane case, namely $y \ge 0$,
together with the classical boundary condition
\beq \label{Eulerlin3} 
v_2 = 0 \qquad \hbox{for} \qquad y = 0.
\eeq
In all this paper,  $U(y) = (U_s(y),0)$ is a given shear layer, and $U_s(y)$ is a $C^\infty(\rit^+)$ function, 
satisfying $U_s(0) = 0$, $U_s'(0) \ne 0$, 
 such that $U_s(y)$ converges exponentially fast to some given constant $U_+$ as
 $y \to + \infty$, and such that $U_s''(y)$ converges exponentially fast to $0$  as $y$ goes to $+\infty$.

We will moreover assume that there exists at most a finite number of $y$ such that $U_s'(y) = 0$,
that, for all these $y$,  $U_s''(y) \ne 0$, and that the corresponding values $U_s(y)$ are all different.
These values of $y$, if they exist, will be called "extremal layers" and denoted $y_{extr}^l$ for $l = 1,\dots,L$.
The corresponding velocities $c_{extr}^l = U_s(y_{extr}^l)$ will be called "extremal velocities".

These assumptions include, for instance,  $U_s(y) = U_+ (1 - e^{-y})$  and $U_s(y) = y e^{-y}$.

\medskip

In the recent years, there have been many new mathematical results and breakthroughs on the linear and nonlinear stability of 
particular shear flows 
(like Couette flow, Poiseuille flow, Kolmogorov flow, or shear flows near Couette) for the incompressible Euler equations 
in the periodic domains $\mathbb{T}\times [0,1]$, $\mathbb{T}\times \mathbb{R}$ or $\mathbb{T}^2$, in the wake of the seminal
work of J. Bedrossian and N. Masmoudi \cite{Bedrossian-Masmoudi-2015} on Couette flow,  see for example 
\cite{Bianchini-Zelati-Dolce,Deng-Zilinger-2020,Ionescu-Jia-AM,Ionescu-Jia-CMP-2020,Ionescu-Iyer-Jia,Wei-Zhang-Zhao-CPAM-2018,Wei-Zhang-Zhao-APDE-2019,Wei-Zhang-Zhao-AdvM-2020}. 
 
\medskip

The study of the asymptotic behavior of solutions to (\ref{Eulerlin1},\ref{Eulerlin2},\ref{Eulerlin3}) for general shear layer profiles
$U_s(y)$ has been pioneered in physics by F. Bouchet and H. Morita 
in \cite{Bouchet-Morita-2010}. They conjectured that, in the absence of unstable eigenvalues, the vertical velocity should decay like
$\langle t \rangle^{-2}$, the horizontal velocity like $\langle t \rangle^{-1}$, and that the vorticity should remain of order $O(1)$.
They also noticed that, asymptotically, the horizontal Fourier transform $\omega_\alpha(t,y)$ of the vorticity $\omega(t,x,y)$ should satisfy
\beq \label{behavioromega}
\omega_\alpha(t,y) = \omega^\infty_\alpha(y) e^{- i \alpha U_s(y) t} + O(\langle t^{-\gamma} \rangle)
\eeq
for some limiting vorticity profile $\omega^\infty_\alpha(y)$ and for some unspecified positive exponent $\gamma$. They moreover
conjecture that 
\beq \label{behavioromega2}
\omega^\infty_\alpha(y_{extr}^l) = 0 
\eeq
for any $1 \le l \le L$, a phenomena they called "depletion property". In other words, in large times, the vorticity should be small near each $y_{extr}^l$.

The question of the value of $\gamma$, together with the description of $\omega_\alpha^\infty$ near $y_{extr}^l$ 
are left open in \cite{Bouchet-Morita-2010}. The aim of this paper is to address in details these two questions and to obtain optimal bounds
on the decay of the horizontal and vertical velocities.

Previously, in \cite{Wei-Zhang-Zhao-APDE-2019},   for symmetric flows, D. Wei, Z. Zhang, W. Zhao
 have proven the $\langle t \rangle^{-2}$ decay of the vertical velocity  
 and the $\langle t \rangle^{-1}$ decay of the horizontal one in $L^2$ norm, together
with (\ref{behavioromega2}), but left open the  studies of $\gamma$ and of 
 $\omega^\infty_\alpha$ near the extremal layers. The authors have also investigated special flows, like Kolmogorov flows
 \cite{Wei-Zhang-Zhao-AdvM-2020} or the  "$\tanh$" flow \cite{Zhang3}.
 
Later,  in \cite{Ionescu-Iyer-Jia}, A. Ionescu, S. Iyer and H. Jia  extended the proof of the decay of the velocities
to general flows, and made progresses on the value of $\gamma$
and on the behavior of $\omega_\alpha^\infty$ near the extremal layers.
 They in particular proved that the vorticity $\omega(t,x,y)$ may be split 
 into a "local part" $\omega_{loc}$ and a "nonlocal part" $\omega_{nloc}$ with, near each $y_{extr}^l$,
\beq \label{behavioromega3}
| \omega_{loc}(t,x,y)) | \lesssim | y - y_{extr}^l|^{7/4} 
\eeq
and
\beq  \label{behaviorom}
| \omega_{nloc}(t,x,y) | \lesssim \langle t \rangle^{-7/8},
\eeq
and conjectured that the same inequalities should be true with $2^-$ and $1^-$ instead of $7/4$ and $7/8$.

\medskip

 As we will see below, the $L^2$ decay of the velocities hides a slightly slower decay 
 occuring close to the extremal layer. This slower decay can be seen if we look for pointwise, namely  $L^\infty$ estimates,
 but is hidden if we use energy norms.
  In this paper we address $L^\infty$ norms, which requires to correct Bouchet and Morita's conjectures by a logarithmic factor,
not described in  \cite{Bouchet-Morita-2010}.
 
\medskip

The aim of this paper is to  give an elementary and short proof of F. Bouchet and H. Morita's computations, together with optimal 
bounds in (\ref{behavioromega}), (\ref{behavioromega3}) and (\ref{behaviorom}),
and together with  optimal pointwise bounds on the decay of the velocities.

\medskip

The proof  follows a direct approach. The main new ingredient is an
explicit construction of the Green's function of Rayleigh equation, thanks to a detailed  
 description of the solutions of this equation, even near points where $U_s'$ vanishes.
 We in particular set into light a "localization property" of the solutions of Rayleigh equation near extremal velocities.
This allows us to  simplify 
\cite{Ionescu-Iyer-Jia,Wei-Zhang-Zhao-APDE-2019},
to  improve the description of the "vorticity depletion", to prove the optimal bounds conjectured in \cite{Ionescu-Iyer-Jia},
and also to add a boundary, possible unstable eigenvalues as well as simple embedded eigenvalues.

\medskip

Up to the best of our knowledge, the central part of the current work, 
namely the detailed description of the solutions of Rayleigh equation near the extremum of
the velocity is completely new.

\subsubsection*{Notation}

We say that $f \lesssim g$ if there exists a constant $C$ such that $| f | \le C |g|$.
We define 
$$
\langle t \rangle = 1 + | t |.
$$
We denote by $Range(U_s)$ the closure of the set $\{U_s(y), y \in \rit^+\}$.
We will say that two functions $f$ and $g$ are of the same order if both $f / g$ and $g / f$ are bounded.


\section{Setup  and main results}


Our method also  applies to the $x$ - periodic cases $\tit^2$ or  $\tit \times \rit$, and to
 $\rit \times \rit$ and $\rit \times \tit$. But for the sake of simplicity, we only state the results in the half plane case $\rit \times \rit^+$.
We first restate the assumptions on the shear layer profile $U_s(y)$ as follows.

\medskip

\noindent Assumption (A1): 
{\it $U_s(y)$ is a $C^\infty$ function such that $U_s(0) = 0$, $U_s'(0) \ne 0$,  such that $U_s(y)$ 
converges exponentially fast to some constant $U_+$ at infinity, and such that $U_s''(y)$ converges exponentially fast to $0$
as $y \to + \infty$.}

\medskip

\noindent Assumption (A2): 
{\it Either   $U_s'(y)$ never vanishes on $\rit^+$, or the equation $U_s'(y) = 0$
has a finite number of solutions, denoted by $(y_{extr}^l)_{1 \le l \le L}$,  and called extremal layers.
In this case, the corresponding extremal velocities
$c_{extr}^l  = U_s(y_{extr}^l)$
are all different,  and, for any $1 \le l \le L$,  $U_s''(y_{extr}^l) \ne 0$.}

\medskip

We now recall the classical setting \cite{Reid} and introduce Rayleigh equation.
Let $L_{E}$ be the linearized Euler  operator near the shear layer profile $U_s$, namely
\beq \label{linearNS}
L _E \, v  = (U \cdot \nabla) v + (v\cdot \nabla) U + \nabla p,
\eeq
with $\nabla \cdot v = 0$ and $v_2 = 0$ on the boundary. A classical way to compute the solution $v(t)$ 
of linearized Euler equations is to use contour integrals. More precisely, as $L_E$ is closed and sectorial \cite{Haase}, the solution $v(t)$
of 
$$
\partial_t v + L_E \, v = 0,
$$
 with initial data $v_0$, is given through  the Dunford's contour integral \cite{Dunford,Haase},
\beq \label{contour1}
v(t) = {1 \over 2 i \pi} \int_\Gamma e^{\lambda t} (L_{E} + \lambda)^{-1} v_0 \; d\lambda ,
\eeq
in which $\Gamma$ is a contour "on the right" of the spectrum.
 We are therefore led to study the resolvent of $L_{E}$, namely to
study the equation
\beq \label{resolvant}
(L_{E} + \lambda) v = f,
\eeq
where $f$ is a given forcing term and $\lambda$ a complex number. 
To take advantage of the divergence free condition, we introduce the stream function $\psi$ of $v$ and take the Fourier
transform in $x$, which leads to look for solutions of (\ref{resolvant}) of the form
$$
v = \nabla^\perp \Bigl( e^{i \alpha (x - ct) } \psi(y) \Bigr),
$$
where $\nabla^\perp = (\partial_y,- \partial_x)$ and where $c$ is defined by
$$
\lambda = - i \alpha c,
$$
according to the traditional notations. 
We also take the Fourier  transform of the forcing term $f$
$$
f = \Bigl( f_1(y),f_2(y) \Bigr) e^{i \alpha x } .
$$
Taking the curl of (\ref{resolvant}), we then get the classical Rayleigh equation
\beq \label{Ray}
Ray_{\alpha,c}(\psi) =  (U_s - c)  (\partial_y^2 - \alpha^2) \psi - U_s''  \psi  
 =  i {\nabla \times f \over \alpha},
\eeq
where
$$
\nabla \times (f_1,f_2) = i \alpha f_2 - \partial_y f_1,
$$
together with the boundary condition 
\beq \label{condOrr}
\psi(0) = 0.
\eeq
Note that the vorticity is 
$$
\omega = - (\partial_y^2 - \alpha^2) \psi.
$$
When $f = v_0$, the right hand side of (\ref{Ray}) is simply $i \alpha^{-1} \omega_\alpha^0$,
where $\omega_\alpha^0$ is the Fourier transform in $x$ of the initial vorticity
$\omega_0 = \nabla \times v_0$.

Let us first consider Rayleigh equation without forcing term and without boundary condition, namely 
\beq \label{Rayleighwithout1}
(U_s - c) (\partial_y^2 - \alpha^2) \psi - U_s'' \psi = 0 .
\eeq
At infinity, $U_s''(y)$ decays exponentially fast to $0$. As a consequence, 
for a given complex number $c$ outside of $Range(U_s)$,   there exist two solutions $\psi_{+,\alpha,c}(y)$ 
and $\psi_{-,\alpha,c}(y)$ which asymptotically behave like $e^{|\alpha| y}$ and $e^{- |\alpha| y}$ when $y$ goes to $+\infty$.
These solutions are however not well defined if $c \in Range(U_s)$.
As a consequence, the continuous spectrum $\sigma_C$ of linearized Euler equations is the $Range(U_s)$,
and the point spectrum $\sigma_P$ satisfies
$$
\sigma_P(U_s)  \backslash  Range(U_s) =  \Bigl\{ c \in \cit \backslash Range(U_s)  \; | \;    \psi_{-,\alpha,c}(0) = 0 \Bigr\}.
$$
The first step of the analysis is to investigate whether there exists so called embedded eigenvalues,
namely eigenvalues in $\sigma_C$. For this we need to extend $\psi_{-,\alpha,c}(0)$ to real values of $c$.

\begin{proposition} \label{extension} (Extension of the dispersion relation) \\
Under the assumptions (A1) and (A2), 
 $\psi_{-,\alpha,c}(0)$ may be extended by continuity to $\rit^+ - \{ (c_{extr}^l)_{1 \le l \le L} \}$.
\end{proposition}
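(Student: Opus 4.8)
The plan is to realize $\psi_{-,\alpha,c}(0)$ as a boundary value from the upper half‑plane $\{\mathrm{Im}\,c>0\}$ of the holomorphic function already defined off $\mathrm{Range}(U_s)$. Recall that, for $c\notin\mathrm{Range}(U_s)$, the potential $U_s''/(U_s-c)$ in the normalized Rayleigh equation $\psi''-\bigl(\alpha^2+U_s''/(U_s-c)\bigr)\psi=0$ is smooth on $\rit^+$ and exponentially small at infinity, so a Volterra/contraction argument run inward from $y=+\infty$ produces the unique solution with $\psi_{-,\alpha,c}(y)\sim e^{-|\alpha|y}$, depending holomorphically on $c$; hence $c\mapsto\psi_{-,\alpha,c}(0)$ is holomorphic there. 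I want to continue this inward integration down to $y=0$ for $c$ real and in the range, away from the $c_{extr}^l$, keeping track of continuous dependence in $c$ up to $\{\mathrm{Im}\,c=0\}$ (the side relevant to the contour $\Gamma$ in \eqref{contour1}).

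First I would fix $c_0\in\rit^+\cap\mathrm{Range}(U_s)$ with $c_0\neq c_{extr}^l$ for all $l$. By (A2) together with $U_s'(0)\neq0$, the set $\{y\geq 0:U_s(y)=c_0\}$ is finite, say $y_1<\dots<y_N$, and each root is simple: $U_s'(y_j)\neq0$. For $c$ near $c_0$ the equation $U_s(y)=c$ then has exactly $N$ simple roots $y_j(c)$, holomorphic in $c$, with $y_j(c_0)=y_j$, and lying off $\rit^+$ when $\mathrm{Im}\,c\neq0$. The heart of the argument is the local analysis near a single $y_j$, where $y_j(c)$ is a regular singular point of Rayleigh's equation with indicial exponents $0$ and $1$. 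I would record the Frobenius normal form: there is a solution $\phi_1(\cdot,c)$ holomorphic in $y-y_j(c)$ and in $c$, with $\phi_1(y_j(c),c)=0$ and $\phi_1'(y_j(c),c)\neq0$, and a second solution
\[
\phi_2(y,c)=\widetilde\phi_2(y,c)+\kappa(c)\,\phi_1(y,c)\,\log\!\bigl(U_s(y)-c\bigr),
\]
with $\widetilde\phi_2$ holomorphic, $\widetilde\phi_2(y_j(c),c)=1$, and $\kappa(c)$ proportional to $U_s''(y_j(c))/U_s'(y_j(c))$ — which is \emph{finite} precisely because $U_s'(y_j)\neq0$, i.e. because $c_0$ is not an extremal velocity. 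For $\mathrm{Im}\,c>0$ small, the principal branch of $\log(U_s(y)-c)$ on a fixed real interval around $y_j$ tends, as $\mathrm{Im}\,c\to0^+$, to $\log|U_s(y)-c_0|$ on the side of $y_j$ where $U_s-c_0$ carries the sign of $U_s'(y_j)$, and to $\log|U_s(y)-c_0|-i\pi\,\mathrm{sgn}\,U_s'(y_j)$ on the other side. Writing $\psi_{-,\alpha,c}$ in the basis $\{\phi_1,\phi_2\}$ on each side then produces an explicit connection matrix relating the Cauchy data $(\psi,\psi')(y_j^+)$ to $(\psi,\psi')(y_j^-)$, holomorphic in $c$ for $\mathrm{Im}\,c>0$, continuous up to $\mathrm{Im}\,c=0$, and carrying a $-i\pi\kappa(c)$ jump term.

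With this in hand I would assemble the global picture by alternating two operations, each continuous in $c$ uniformly on a half‑disk $\{|c-c_0|<\delta,\ \mathrm{Im}\,c\geq0\}$: (i) solving the regular ODE on a compact subinterval of $\rit^+$ that avoids all the $y_j$ — first from $+\infty$ down to just above $y_N$, then on $(y_{N-1},y_N)$, and so on down to $(0,y_1)$; and (ii) crossing each singular layer $y_j$ via the connection matrix above. Composing these finitely many continuous maps shows that $\psi_{-,\alpha,c}(0)$ extends continuously to $\{|c-c_0|<\delta,\ \mathrm{Im}\,c\geq0\}$, and in particular to the real segment $(c_0-\delta,c_0+\delta)$. (The boundary/endpoint values $c_0=0$ or $c_0$ at an end of $\mathrm{Range}(U_s)$ are handled the same way, the critical layer then sitting at $y=0$ or escaping to $+\infty$, where $U_s''$ is exponentially small so no singularity survives in the limit.) Since $c_0$ was an arbitrary point of $\rit^+$ other than the $c_{extr}^l$, this yields the claimed continuous extension of $\psi_{-,\alpha,c}(0)$ to $\rit^+-\{(c_{extr}^l)_{1\le l\le L}\}$.

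The main obstacle I anticipate is the uniform control of the local analysis at the critical layers: proving that the Frobenius solutions $\phi_1,\widetilde\phi_2$, the logarithmic coefficient $\kappa(c)$, and the chosen branch of $\log(U_s(y)-c)$ all depend continuously (indeed holomorphically for $\mathrm{Im}\,c>0$) on $c$ as the singular point $y_j(c)$ slides down onto the real axis. This is exactly the place where excluding the extremal velocities is indispensable: $U_s'(y_j)\neq0$ keeps the indicial exponents equal to $\{0,1\}$, keeps $\kappa(c)$ bounded, and prevents two critical layers from colliding. At an extremal velocity the two roots $y_j(c)$ coalesce into a double zero of $U_s-c$, the type of the singularity changes, and no such continuous extension should exist — in agreement with the statement.
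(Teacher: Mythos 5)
Your global scheme (construct the decaying solution by integrating inward from $y=+\infty$, cross each critical layer with a connection matrix, and compose finitely many maps that are continuous in $c$ up to $\{\Im c=0\}$ away from the extremal velocities) is essentially the same gluing argument as the paper's. The genuine gap is in your key local lemma. You invoke the Frobenius/Fuchs normal form at the simple critical layer: solutions $\phi_1$, $\widetilde\phi_2$ \emph{holomorphic} in $y-y_j(c)$ and in $c$, with a complex critical layer $y_j(c)$ depending holomorphically on $c$ and "lying off $\rit^+$ when $\Im c\neq 0$". All of this presupposes that $U_s$ is real-analytic (so that it extends to complex $y$ and the ODE has an honest regular singular point in the complex domain). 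Assumption (A1) only gives $U_s\in C^\infty$, and the paper is explicit that the Fuchs--Frobenius route "requires $U_s$ to be holomorphic"; for a merely smooth profile the equation $U_s(y)=c$ has no root at all for $\Im c\neq 0$, so your $y_j(c)$, the holomorphy of $\phi_1,\widetilde\phi_2$ in $c$, and hence the continuity of your connection matrix as $\Im c\to 0^+$, are not justified under the stated hypotheses. As written, your argument proves the proposition only for analytic $U_s$.

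This is exactly the point where the paper substitutes its own local construction (Proposition \ref{propfirst2}): the critical layer is defined by $U_s(y_c)=\Re c$ only, the $\alpha=0$ equation is solved explicitly ($\psi_1^0=U_s-c$ and $\psi_2^0$ via the Wronskian integral, analyzed through the change of variables $v=U_s(u)$ and a Taylor expansion of $1/U_s'(U_s^{-1}(v))$), and the case $\alpha\neq 0$ is obtained by a contraction/iteration with the $\alpha=0$ Green's function; this yields the decomposition $\psi^r+(U_s-c)\log(U_s-c)\,\psi^s$ with coefficients that are only $C^1$ in $(y,c)$ but extend continuously to $\Im c=0$, together with bounded local resolvents. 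If you replace your Frobenius step by such a lemma (or add the hypothesis that $U_s$ is analytic), the remainder of your argument — simplicity of the roots away from the $c_{extr}^l$, finitely many crossings, continuity of the composed transfer maps up to the real axis, and the separate treatment of $c$ near $U_+$ where the layer escapes to infinity — does give the claimed extension and coincides in substance with the paper's proof.
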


As will be clear in the proof, $\psi_{-,\alpha,c}(0)$ may be singular at the extremal layers, and even become infinite.

The embedded eigenvalues are the zeros of $\psi_{-,\alpha,c}(0)$ which are in $Range(U_s)$.
We now add two assumptions on the embedded eigenvalues.

\medskip

\noindent Assumption (A3): {\it $\psi_{-,\alpha,c}(0)$ is bounded away from $0$ in a neighbourhood of $\{ (c_{extr}^l)_{1 \le l \le L} \}$}.

\medskip

\noindent Assumption (A4): {\it There exists at most a finite number of embedded eigenvalues 
$(c_{embedded}^k)_{1 \le k \le K}$, namely a finite number of solution to $\psi_{-,\alpha,c}(0) = 0$ with
$c \in Range(U_s) -   \{ (c_{extr}^l)_{1 \le l \le L} \}$, and they are all  simple.}

\medskip

Assumption (A3)  is a way to say that the extremal values are not embedded
eigenvalues, which would uselessly complicate the analysis for a very particular case 
(there is no evidence that such a case could even exist).
With these assumptions we can now extend $\psi_{\pm,\alpha,c}$ to  real values of $c$.

\begin{proposition} \label{localization} (localization principle) \\ 
Let us assume (A1), (A2), (A3) and (A4).
Let $\alpha$ be fixed. Then the two solutions $\psi_{\pm,\alpha,c}(y)$, with Wronskian $1$, can be extended by continuity to $\rit^+ - \{ (c_{extr}^l)_{1 \le l \le L} \}$
in such a way that $\psi_{-,\alpha,c}(y) \to 0$ as $y \to + \infty$.
Moreover, if $c$ is away from the extremal velocities $(c_{extr}^l)_{1 \le l \le L}$ and away from $U_+$,
\beq \label{bbbb1}
| \psi_{\pm,\alpha,c} (y) | \lesssim e^{\pm | \alpha | y} .
\eeq
If, on the contrary, $c$ is close to a extremal value $c_{extr}^l$ for some $1 \le l \le L$,
then for any $\eps > 0$ arbitrarily small,  in the vicinity of $c_{extr}^l$,  for $y > y_{extr}^l + \eps$,
\beq \label{bbbb2}
| \psi_{-,\alpha,c} (y) | \lesssim  e^{- | \alpha | y}, \qquad | \psi_{+,\alpha,c}(y) | \lesssim e^{+ | \alpha | y},
\eeq
and for $y < y_{extr}^l - \eps$,
\beq \label{bbbb3}
| \psi_{-,\alpha,c}(y) | \lesssim  | c - c_{extr}^l |^{-3/2}, \qquad
| \psi_{+,\alpha,c}(y) | \lesssim | c - c_{extr}^l |^{+3/2}.
\eeq
\end{proposition}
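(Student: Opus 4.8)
\noindent{\it Proof sketch.} The plan is to start from $c$ off the real axis, where the coefficient $U_s''/(U_s-c)$ of \eqref{Rayleighwithout1} is analytic, the two solutions $\psi_{\pm,\alpha,c}\sim e^{\pm|\alpha|y}$ at $+\infty$ are produced by the classical theory, and the Wronskian $\psi_{+,\alpha,c}\psi_{-,\alpha,c}'-\psi_{+,\alpha,c}'\psi_{-,\alpha,c}$ is a nonzero constant (normalise it to $1$); then let $\mathrm{Im}\,c\to0^{\pm}$ along the side of the real axis prescribed by the contour $\Gamma$, which amounts, for real $c$, to solving \eqref{Rayleighwithout1} along the real $y$-axis deformed around each critical layer $y_c\in\{U_s=c\}$ in the direction fixed by $\mathrm{sgn}(\mathrm{Im}\,c)$. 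As long as $c$ stays away from the extremal velocities and from $U_+$, each such $y_c$ is simple ($U_s'(y_c)\neq0$); near it \eqref{Rayleighwithout1} has a regular singular point with indicial exponents $0$ and $1$, and Frobenius' method gives one analytic solution vanishing at $y_c$ and one of the form $1+\tfrac{U_s''(y_c)}{U_s'(y_c)}(y-y_c)\log(y-y_c)+\cdots$, both continuous at $y_c$; hence $\psi_{\pm,\alpha,c}$ extend continuously in $c$ to $\rit^+-\{(c_{extr}^l)\}$ (sharpening Proposition~\ref{extension}), assumptions (A3) and (A4) ruling out or isolating embedded eigenvalues. Between consecutive critical layers the substitution $\psi=(U_s-c)\chi$ turns \eqref{Rayleighwithout1} into the divergence form identity $\bigl[(U_s-c)^2\chi'\bigr]'=\alpha^2(U_s-c)^2\chi$, and integrating this twice, using that $|U_s-c|$ is then bounded above and below, yields \eqref{bbbb1}; matching across the simple $y_c$'s by the two Frobenius solutions propagates it over all of $\rit^+$.

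The new ingredient is the behaviour near an extremal velocity $c_{extr}^l$, where a pair of critical layers coalesces. With $z=y-y_{extr}^l$ and $\kappa=\tfrac12U_s''(y_{extr}^l)\neq0$ one has $U_s-c=\kappa z^2-(c-c_{extr}^l)+O(z^3)$, so the two critical layers sit at $z\simeq\pm a$, $a^2=(c-c_{extr}^l)/\kappa$, at distance $\sim|c-c_{extr}^l|^{1/2}$ from $y_{extr}^l$ and merging as $c\to c_{extr}^l$. On a fixed (small, $c$-independent) neighbourhood $|z|\le\delta_0$ I would use two explicit solutions: $\phi_1:=U_s-c$, which is an exact solution of the $\alpha=0$ Rayleigh equation, hence satisfies $Ray_{\alpha,c}(\phi_1)=-\alpha^2(U_s-c)^2$ and is turned into a true solution by a contraction in the variation of parameters formulation (the coupling $\alpha^2(U_s-c)$ being small near the critical layers), and its companion $\phi_2:=\phi_1\int^y\phi_1^{-2}$ from reduction of order, with Wronskian $W(\phi_1,\phi_2)=1$. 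The decisive computation is that near $y_{extr}^l$
$$
\int\phi_1^{-2}\,dy\;\simeq\;\frac1{\kappa^2}\int\frac{dz}{(z^2-a^2)^2}\;=\;\frac1{\kappa^2}\left[-\frac{z}{2a^2(z^2-a^2)}-\frac1{4a^3}\log\frac{z-a}{z+a}\right],
$$
whose logarithm changes branch as $z$ moves from the region $z>0$ to the region $z<0$ across the two critical layers; with the side of approach fixed by $\mathrm{Im}\,c\to0^{\pm}$, crossing picks up an extra term $\sim a^{-3}\phi_1\sim|c-c_{extr}^l|^{-3/2}\phi_1$. This is the localization principle: $\phi_2$ may be normalised to be $O(1)$ on one side of $y_{extr}^l$ but is then of size $|c-c_{extr}^l|^{-3/2}$ on the other, while $\phi_1$ stays bounded on both; a Gronwall/contraction estimate on the correction terms shows the true solutions inherit these leading orders on $|z|\le\delta_0$, and the first paragraph (now with $c$ away from the critical layers on $|z|\ge\delta_0$) carries them outward.

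It remains to locate $\psi_{\pm,\alpha,c}$ among the combinations of $\phi_1,\phi_2$ and to read off \eqref{bbbb2} and \eqref{bbbb3}. The solution $\psi_{-,\alpha,c}$, fixed by decay at $+\infty$, is in the region $z>0$ a combination $a_1\phi_1+a_2\phi_2$ in which the $\phi_2$ part is dominant (a nonzero $\phi_1$ part would reappear at $+\infty$ as $e^{+|\alpha|y}$) and $a_2$ is bounded below uniformly near $c_{extr}^l$ by (A3), the extremal value not being an embedded eigenvalue; hence $|\psi_{-,\alpha,c}|\lesssim e^{-|\alpha|y}$ for $y>y_{extr}^l+\eps$ and, carrying $\phi_2$ through its jump, $|\psi_{-,\alpha,c}|\lesssim|c-c_{extr}^l|^{-3/2}$ for $y<y_{extr}^l-\eps$. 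The solution $\psi_{+,\alpha,c}$ is fixed by growth at $+\infty$ and by $W(\psi_{+,\alpha,c},\psi_{-,\alpha,c})=1$; as the latter pins it down only modulo $\psi_{-,\alpha,c}$, I would choose the representative whose $\phi_2$ component near $y_{extr}^l$ is exactly the one for which the large $\phi_1$ term generated by the branch jump cancels on crossing to $z<0$ --- equivalently, $\psi_{+,\alpha,c}$ is proportional to $\phi_1$ near $y_{extr}^l$ up to a $\phi_2$ correction whose size, by the Wronskian relation between the leading coefficients of $\psi_{\pm,\alpha,c}$, is of order $|c-c_{extr}^l|^{3/2}$. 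Then $|\psi_{+,\alpha,c}|\lesssim e^{+|\alpha|y}$ for $y>y_{extr}^l+\eps$, while on $y<y_{extr}^l-\eps$ only the $O(|c-c_{extr}^l|^{3/2})$ correction survives, which gives \eqref{bbbb3}.

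I expect the main obstacle to be the uniform error analysis near the coalescing critical layers: one must prove that $\phi_1$ and $\phi_2$ are genuinely the leading order solutions on a neighbourhood of $y_{extr}^l$ whose size does \emph{not} shrink as $c\to c_{extr}^l$, that the $|c-c_{extr}^l|^{-3/2}$ coefficient is not drowned by the corrections and that its branch --- hence the very existence of the cancellation producing the exponent $+3/2$ for $\psi_{+,\alpha,c}$ --- is correctly tracked through the $\mathrm{Im}\,c\to0^{\pm}$ limit, and that the residual $\phi_2$ component of $\psi_{+,\alpha,c}$ is sharply of order $|c-c_{extr}^l|^{3/2}$ and not merely $O(1)$, which is precisely where the Wronskian normalisation of the pair $(\psi_{+,\alpha,c},\psi_{-,\alpha,c})$ must enter.
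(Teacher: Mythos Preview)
Your approach is essentially the paper's: the same explicit local solutions $\phi_1=U_s-c$ and $\phi_2=\phi_1\int\phi_1^{-2}$ near the extremal point, the same integral $\int dz/(z^2-a^2)^2$ producing the $|c-c_{extr}^l|^{-3/2}$ branch jump, the same perturbation to $\alpha\ne0$ by contraction, and the same appeal to (A3) to prevent a degeneracy. Two points deserve sharpening.

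First, the parenthetical ``a nonzero $\phi_1$ part would reappear at $+\infty$ as $e^{+|\alpha|y}$'' is not a valid argument: $\phi_1$ and $\phi_2$ are \emph{local} solutions on $|z|\le\delta_0$, and once continued to $+\infty$ each becomes a generic combination of the two exponentials. There is no intrinsic link between the $\phi_1$ component at $y_{extr}^l$ and growth at infinity. What you actually obtain from the matching at $y_{extr}^l+\eps$ is only that $a_1,a_2$ are both \emph{bounded} (the transfer matrix there is bounded); nothing forces $a_2$ away from zero.

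Second, and this is where (A3) truly enters, the paper closes the gap by a dichotomy rather than by your parenthetical. It defines $\psi_{+,\alpha,c}$ explicitly by $\psi_{+,\alpha,c}(y)=\psi_{-,\alpha,c}(y)\int_y^{+\infty}\psi_{-,\alpha,c}^{-2}$, so that the pair is uniquely fixed (not just modulo $\psi_{-,\alpha,c}$). If the ``large'' coefficient $a_-^1$ of $\psi_{-,\alpha,c}$ on the local basis were to degenerate along a sequence $c_n\to c_{extr}^l$, then by independence the corresponding coefficient $a_+^1$ of $\psi_{+,\alpha,c}$ would \emph{not} degenerate, so $\psi_{+,\alpha,c}$ would be of order $|c-c_{extr}^l|^{-3/2}$ on the left; the integral formula then forces $\psi_{-,\alpha,c}$ to be of order $|c-c_{extr}^l|^{+3/2}$ on the left, hence $\psi_{-,\alpha,c}(0)\to0$, contradicting (A3). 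This dichotomy is exactly the mechanism that gives you the lower bound on the relevant coefficient, which in turn is what produces the $|c-c_{extr}^l|^{+3/2}$ upper bound on $\psi_{+,\alpha,c}$ via the integral formula. Your final paragraph anticipates that the Wronskian must enter here; the point is that you need the \emph{explicit} representative $\psi_{+,\alpha,c}=\psi_{-,\alpha,c}\int\psi_{-,\alpha,c}^{-2}$ (or an equivalent argument) rather than a free choice, together with the contradiction argument ruling out the degenerate case.
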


This Proposition may be interpreted as a "localization principle". As $c \to c_{extr}^l$, 
$(c - c_{extr}^l)^{3/2} \psi_{-,\alpha,c}(y)$ goes to $0$ for $y > y_{extr}^l$ and is of order $1$ for $y < y_{extr}^l$, 
namely get "localised" on $y < y_{extr}$ as $c \to c_{extr}^l$. On the contrary,
$\psi_{+,\alpha,y}(y)$ goes to $0$ for $y < y_{extr}$ and is of order $1$ for $y > y_{extr}^l$.

The localization property is the key ingredient of the vorticity depletion phenomenum.
As $c$ goes to $c_{extr}^l$, the "information" coming from the left of the extremal point  gets blocked at
$y_{extr}^l$ and does not affect the flow for $y > y_{extr}^l$: "what is on the left remains on the left", and conversely
for the right.

We now turn to the asymptotic behavior of solutions to linearized Euler equations.

\begin{theorem} \label{growthRayleigh} (Asymptotic behaviour of the solutions) \\
Let us assume (A1), (A2), (A3),and (A4).
Let us assume that the horizontal Fourier transform $\omega_\alpha^0(x)$ of the initial vorticity
is a $C^2$ function which decays exponentially fast at infinity.
Let 
\beq \label{Theta}
\Theta(x) = 1 + 1_{|x| \le 1} | \log | x| | .
\eeq
Then we have, for any fixed $\alpha \ne 0$,
\beq \label{result1}
\psi_\alpha(t,y) = \psi^{modes}_\alpha(t,y) + \psi_\alpha^{decay}(t,y),
\eeq
where $\psi^{modes}_\alpha$ is a linear combination of eigenvenvectors of Rayleigh equation, with corresponding
eigenvalues in $\sigma_P \cap \{ \Im c \ge 0 \}$, and where 
\beq \label{correctebound1}
| \psi_\alpha^{decay}(t,y) | \lesssim \langle \alpha t \rangle^{-1}  \Pi_{l=1}^L \Theta( y - y_{extr}^l)
\eeq
and
\beq \label{correctedbound2}
 | \partial_y \psi_\alpha^{decay}(t,x,y) | 
\lesssim \langle \alpha t \rangle^{-1}  \Pi_{l=1}^L \Theta( y - y_{extr}^l)  .
\eeq
 Moreover, there exists $\omega_\alpha^\infty(t,y)$ such that
 \beq \label{result4}
 \omega_\alpha(t,y) = \omega_\alpha^{modes}(t,y) + \omega_\alpha^\infty(y) e^{- i \alpha U_s(y) t} + \omega_\alpha^{remain}(t,y),
 \eeq
 where $\omega_\alpha^{modes}(t,y)$ is the vorticity corresponding to $\psi_\alpha^{modes}(t,y)$, where $\omega_\alpha^\infty(y)$
 is such that   for any $1 \le l \le L$,
 \beq \label{deep1}
 \omega_\alpha^\infty(y_{extr}^l) = 0,
 \eeq
 and such that,   near  each $y_{extr}^l$,
 \beq \label{deep2}
 | \omega_\alpha^\infty(y) | \lesssim | y - y_{extr}^l |^2 \, |\log | y - y_{extr}^l | \, | ,
 \eeq
and where
\beq \label{omegaremain}
| \omega_\alpha^{remain}(t,y) | \lesssim   \langle \log(\alpha t) \rangle^{-1}  \Pi_{l=1}^L \Theta( y - y_{extr}^l).
\eeq
\end{theorem}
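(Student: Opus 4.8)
The plan is to insert the Dunford contour integral representation \eqref{contour1} and then deform the contour onto the continuous spectrum $Range(U_s)$, so that $v(t)$ — equivalently $\psi_\alpha(t,y)$ — is expressed as a sum of residues at the point spectrum (producing $\psi_\alpha^{modes}$) plus an integral over $c \in Range(U_s)$ of the jump of the resolvent across the cut. Concretely, with $c = -\lambda/(i\alpha)$, one writes
\beq \label{plan-repr}
\psi_\alpha(t,y) = \psi_\alpha^{modes}(t,y) + \frac{1}{2\pi}\int_{Range(U_s)} e^{-i\alpha c t}\,\bigl[\psi_{\alpha,c+i0} - \psi_{\alpha,c-i0}\bigr](y)\, dc,
\eeq
where $\psi_{\alpha,c\pm i0}$ solves Rayleigh's equation \eqref{Ray} with right-hand side $i\alpha^{-1}\omega_\alpha^0$ and boundary condition \eqref{condOrr}, obtained via the Green's function built from the functions $\psi_{\pm,\alpha,c}$ of Proposition \ref{localization}. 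The first step is thus to write this Green's function explicitly: $G_{\alpha,c}(y,z)$ is assembled from $\psi_{-,\alpha,c}$ (satisfying the decay at $+\infty$), from the solution satisfying $\psi(0)=0$ (a suitable combination of $\psi_{\pm,\alpha,c}$ determined by the value $\psi_{-,\alpha,c}(0)$, which is bounded away from $0$ near extremal and embedded values by (A3), (A4)), divided by the appropriate Wronskian-type denominator. The jump $[\psi_{\alpha,c+i0}-\psi_{\alpha,c-i0}]$ is then a finite sum of boundary-type terms, localized near the critical layer $y_c$ where $U_s(y_c)=c$ and, crucially, near the extremal layers $y_{extr}^l$.

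Second, I would estimate this jump pointwise in $y$, using the fine description of $\psi_{\pm,\alpha,c}$ from Proposition \ref{localization}. Away from all $y_{extr}^l$ and from $U_+$, the bounds \eqref{bbbb1} give that the jump is $O(1)$ in $c$ with the usual critical-layer logarithmic singularity in $y-y_c$; integrating $e^{-i\alpha c t}$ against a function with an $L^1$-in-$c$, Hölder-type singularity yields the $\langle\alpha t\rangle^{-1}$ decay via one integration by parts in $c$ (the endpoint $U_+$, where $U_s''$ and $U_s'$ degenerate exponentially, is handled using the exponential convergence assumed in (A1)). The non-trivial contribution comes from $c$ near an extremal velocity $c_{extr}^l$: there \eqref{bbbb3} shows $\psi_{-,\alpha,c}$ blows up like $|c-c_{extr}^l|^{-3/2}$ on the side $y<y_{extr}^l$, while \eqref{bbbb2} shows it stays $O(e^{-|\alpha|y})$ on the side $y>y_{extr}^l$; this asymmetry is exactly the "localization principle". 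Normalizing by $\psi_{-,\alpha,c}(0)$, which is of the same $|c-c_{extr}^l|^{-3/2}$ order (it is evaluated at $y=0<y_{extr}^l$), the dangerous powers cancel in the Green's function on one side but leave a residual factor vanishing like a power of $|y-y_{extr}^l|$ on the other side — this produces the depletion \eqref{deep1} and the quantitative bound \eqref{deep2}, while the local square-root–type singularity in $c$ near $c_{extr}^l$, after the oscillatory integration, degrades the decay from a power of $t$ to $\langle\log(\alpha t)\rangle^{-1}$ at $y=y_{extr}^l$ and interpolates to $\langle\alpha t\rangle^{-1}$ away from it — this is the source of the factor $\Theta(y-y_{extr}^l)$ in \eqref{correctebound1} and the $\langle\log(\alpha t)\rangle^{-1}$ in \eqref{omegaremain}.

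Third, to extract \eqref{result4} I would split the $c$-integral in \eqref{plan-repr} into a "boundary layer in $c$ around $U_s(y)$" piece and a remainder. Writing the jump as (smooth-in-$c$ part) $+$ (singular-in-$c$ part supported at $c=U_s(y)$), the singular part, integrated against $e^{-i\alpha c t}$, is frozen at $c=U_s(y)$ and yields the term $\omega_\alpha^\infty(y)e^{-i\alpha U_s(y)t}$ after taking the curl $\omega_\alpha = -(\partial_y^2-\alpha^2)\psi_\alpha$; the value $\omega_\alpha^\infty(y)$ is, up to a universal constant, the coefficient of the logarithmic singularity of $G_{\alpha,U_s(y)}$ times $\omega_\alpha^0(y)$, which by the localization analysis vanishes to order $|y-y_{extr}^l|^2|\log|y-y_{extr}^l||$ at each extremal layer. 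The smooth-in-$c$ remainder decays like $\langle\alpha t\rangle^{-1}\Pi_l\Theta(y-y_{extr}^l)$ by the same integration-by-parts argument, which (after accounting for the two $\partial_y$ in the vorticity, each of which costs one logarithmic $\Theta$ factor and, near $y_{extr}^l$, can be absorbed by the $|y-y_{extr}^l|^2$ gain) gives the stated $\langle\log(\alpha t)\rangle^{-1}\Pi_l\Theta(y-y_{extr}^l)$ bound for $\omega_\alpha^{remain}$. The bounds \eqref{correctebound1}--\eqref{correctedbound2} on $\psi_\alpha^{decay}$ and $\partial_y\psi_\alpha^{decay}$ follow directly from the $c$-integral estimates since one $\partial_y$ on $\psi$ (unlike two $\partial_y$ on $\omega$) does not worsen the behaviour.

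The main obstacle is the second step near the extremal layers: one must track, with precise constants, how the $|c-c_{extr}^l|^{\pm 3/2}$ singularities of $\psi_{\pm,\alpha,c}$ interact in the Green's function and in the jump across the cut, and then control the resulting mildly singular oscillatory integral $\int e^{-i\alpha c t} h(c,y)\,dc$ with $h$ having an integrable but non-$C^1$ singularity whose strength depends on $y$ — this is where the logarithmic correction to Bouchet–Morita's conjecture and the sharp exponent $2$ in \eqref{deep2} come from, and it is exactly the point left open in \cite{Bouchet-Morita-2010,Wei-Zhang-Zhao-APDE-2019,Ionescu-Iyer-Jia}. Everything else — the contour deformation, the residue computation giving $\psi_\alpha^{modes}$, the handling of the endpoint $U_+$, and the generic critical-layer analysis away from extrema — is by now standard given Propositions \ref{extension} and \ref{localization}.
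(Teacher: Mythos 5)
Your overall strategy coincides with the paper's: represent $\psi_\alpha$ by a contour integral of the Rayleigh Green's function assembled from the $\psi_{\pm,\alpha,c}$ of Proposition \ref{localization}, extract $\psi_\alpha^{modes}$ from the poles (unstable and embedded eigenvalues), estimate the continuous-spectrum contribution as an oscillatory integral in $c$ with a partition of unity, and obtain \eqref{result4} by freezing the integrand at $c=U_s(y)$, with the localization near extremal velocities responsible for the depletion. (Minor differences: the paper works with the one-sided limit $\Im c=\eps\to 0^+$ and Sokhotski--Plemelj formulas rather than the jump across the cut --- note that the construction of $\psi_{\pm,\alpha,c}$ is only carried out for $\Im c\ge 0$ --- and it splits $G_{\alpha,c}$ into an interior part and a boundary part $G^b_{\alpha,c}$, the latter carrying all the poles and the $\langle\alpha t\rangle^{-1}$ factor of \eqref{correctebound1}, while the interior part decays like $\langle\alpha t\rangle^{-2}$.) However, as a proof the proposal has a genuine gap, and it sits exactly at the theorem's new content: the sharp bounds \eqref{deep1}, \eqref{deep2}, \eqref{correctebound1} and \eqref{omegaremain} are what you defer as ``the main obstacle'', and the mechanism you sketch for them is not the one that works.

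Concretely, identifying $\omega_\alpha^\infty(y)$ with ``the coefficient of the logarithmic singularity of $G_{\alpha,U_s(y)}$ times $\omega_\alpha^0(y)$'' is not correct: the limiting profile is $\omega_\alpha^\infty(y)=\zeta_\alpha^{int}(y,U_s(y))$ with $\zeta_\alpha^{int}=i\alpha U_s''\,\psi^{int}_{\alpha,c}-\omega^0_\alpha$, i.e.\ it contains a nonlocal term (the stream function at the critical value of $c$) in addition to $\omega^0_\alpha(y)$, and the depletion is a cancellation between the two. Moreover, plugging the localization bounds at $c=U_s(y)$ (where $|y-y_{extr}^l|\sim|c-c_{extr}^l|^{1/2}$, so $|\psi_-|\lesssim|y-y_{extr}^l|^{-1}$, $|\psi_+|\lesssim|y-y_{extr}^l|^{2}$) into a coefficient of that type yields at best $O(|y-y_{extr}^l|)$, not the exponent $2$ with a single logarithm. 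The paper reaches \eqref{deep2} differently: since $U_s''\ne0$ near $y_{extr}^l$, it rewrites $\psi_{\pm,\alpha,c}/(U_s-c)=(\partial_x^2-\alpha^2)\psi_{\pm,\alpha,c}/U_s''$, integrates by parts so that the unit jump of $\partial_x\widetilde G_{\alpha,c}$ on the diagonal produces the local term that cancels $-\omega^0_\alpha(y)$ in $\zeta^{int}_\alpha$, and only then uses the localization bounds to show the remaining nonlocal integrals are $O(|y-y_{extr}^l|^2|\log|y-y_{extr}^l||)$; the same device, combined with the second-derivative-in-$c$ bound $|\widetilde G^{int,r}_\alpha|\lesssim\langle\alpha t\rangle^{-2}(|x-y_{extr}^l|+|y-y_{extr}^l|)^{-1}$ of Lemma \ref{sins}, is what produces the $\Theta$ factors and \eqref{omegaremain} (via the $c$-regularity of $\psi^{int}_{\alpha,c}(y)$), not an interpolation between a $\langle\log(\alpha t)\rangle^{-1}$ decay at $y_{extr}^l$ and $\langle\alpha t\rangle^{-1}$ away from it. The Bouchet--Morita variable $\zeta=-(U_s-c)\omega$ also lets one avoid ever taking two $y$-derivatives of the Green's function, so your step ``each $\partial_y$ on $\omega$ costs one $\Theta$ and is absorbed by the $|y-y_{extr}^l|^2$ gain'' is both unsubstantiated and unnecessary. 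Without these ingredients the argument yields neither the exponent $2$ in \eqref{deep2} nor the stated time decays, which are precisely the improvements over \cite{Bouchet-Morita-2010,Wei-Zhang-Zhao-APDE-2019,Ionescu-Iyer-Jia}.
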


The factor $\langle \alpha t \rangle^{-1}$ in (\ref{correctebound1}) comes from the boundary layer. In the periodic case or in the whole space case,
the decay is $\langle \alpha t \rangle^{-2}$ as in  \cite{Bouchet-Morita-2010}.

 Note  that structures of wavenumbers $\alpha$ evolve under time scales of order $\alpha^{-1}$,
  which leads to the $\alpha t$ homogeneity  of the decay.
  Although we do not detail this points in the proof, the estimates are uniform in $\alpha$.
All the functions we consider decay exponentially fast at infinity. We thus only focus on their behaviour for bounded $y$ 
and do not detail their behaviour at infinity. 

Higher order degenerate points, namely points with $U_s'(y) = U_s''(y) = 0$ and $U_s'''(y) \ne 0$ and so on, could be handled using the same technics.

\medskip

The case where two extremal layers $y_{extr}^l$ share the same velocity is ruled out by the assumptions. However,
if two different extremal layers have the same velocity, it is expected that the flow is unstable, and in this case $\psi_\alpha(t,y)$ 
is exponentially growing \cite{Bouchet2}.

\medskip

The rest of this paper is organized as follows. In section $3$, we 
 give a complete and new description of solutions of Rayleigh equation both near critical and extremal points, and prove
 Propositions \ref{extension} and \ref{localization}.
We then investigate the long time behaviour of $\psi_\alpha$ and $\omega_\alpha$ in section $4$, 
before studying  the "vorticity depletion property"   in section $5$.


\section{Study of Rayleigh equation \label{studyRay0}}


The aim of this section is to prove  Propositions \ref{extension} and  \ref{localization}.
We first locally construct two independent solutions of Rayleigh equation, then
glue them together to get $\psi_{\pm,\alpha,c}$.

Let us fix some $y_0 \in \rit^+$ and some $c_0 \in \cit$ with $\Im c_0 > 0$. The aim of the  sections $3.1$, $3.2$, $3.3$  and $3.4$ is to describe
locally
the solutions of the homogeneous Rayleigh equation (\ref{Rayleighwithout1}) for $y$ and $c$ in the vicinity of $y_0$ and $c_0$,
by constructing two independent local solutions. We distinguish four cases: away from critical and extremal points, near critical points, near 
extremal points and at infinity.


\subsection{Regular point \label{regularpoint}}


If $y_0$ and $c_0$ are such that $U_s(y_0) \ne c_0$, then locally, 
 Rayleigh equation is  a regular ordinary differential equation.
As a consequence, locally, near $(y_0,c_0)$,  there exist two independent  solutions which depend on a smooth way on $c$ and $y$.


\subsection{Near a critical point}


We consider $y_0$ and $c_0$  such that $U_s(y_0) = c_0$ but $U_s'(y_0) \ne 0$.
If $U_s(y)$ is holomorphic, it is well known \cite{Reid} that there exist two independent solutions $\psi_{1,\alpha}(y,c)$
and $\psi_{2,\alpha}(y,c)$ of
Rayleigh equations which are of the form 
\beq \label{exppsi}
\psi_{l,\alpha}(y,c) = P_{l,\alpha}(y - y_c,c) + (y - y_c) \log(y - y_c) Q_{l,\alpha}(y - y_c,c) 
\eeq
for $l = 1$ and $l = 2$,
where $P_{l,\alpha}$ and $Q_{l,\alpha}$ are holomorphic in both $y$ and $c$, and where $y_c$ is the so called critical layer,
defined by $U_s(y_c) = c$. Note that $y_c$ is then a complex number. This approach requires $U_s$ to be holomorphic and follows methods developed in particular by Fuchs and Frobenius.

In this article we develop a new method of construction, which extends to non holomorphic velocity profiles
$U_s(y)$. We will prove the following Proposition.

\begin{proposition} \label{propfirst2}
If $(y_0,c_0)$ is a critical point, namely if $c_0 = U_s(y_0)$ and $U_s'(y_0) \ne 0$,
then, in a vicinity  of $(y_0,c_0)$, for $\Im c > 0$, there
exist two independent solutions $\psi_{1,\alpha}(y,c)$ and $\psi_{2,\alpha}(y,c)$ of Rayleigh equations,
which are of the form
\beq \label{psifirstcase}
\psi_{l,\alpha}(y,c) =  \psi_{l,\alpha}^r(y,c) + (U_s(y) - c) \log(U_s(y) - c)  \psi_{l,\alpha}^s(y,c) ,
\eeq
where $l = 1$ or $l = 2$, 
and where $\psi_{l,\alpha}^r$ and $\psi_{l,\alpha}^s$ are $C^1$ functions of $y$ and $c$.
Moreover, for any $y \ge 0$, $\psi^r_{l,\alpha}(y,c)$ and $\psi^s_{l,\alpha}(y,c)$  can be extended by continuity on the real line
$\Im c = 0$, to $C^1$ functions of $y$ and $c$. 

Let $R(y_1,y_2,c)$ be the resolvent of Rayleigh equation. Then, $R(y_0 - \eps,y_0 + \eps,c)$ is bounded
provided $\eps > 0 $ is small enough and provided $c$ is close enough to $c_0$.
\end{proposition}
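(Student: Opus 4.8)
The plan is to construct the two solutions directly by an \emph{ansatz plus fixed point} argument, bypassing the Frobenius machinery so that no holomorphy of $U_s$ is needed. Write the Rayleigh equation in the form
\beq
(\partial_y^2 - \alpha^2) \psi = {U_s'' \over U_s - c} \, \psi ,
\eeq
and observe that near a critical point the singularity of the right-hand side is a simple pole in $U_s(y) - c$, since $U_s'(y_0) \ne 0$ makes $y \mapsto U_s(y) - c$ a local $C^\infty$ diffeomorphism (for real $c$ near $c_0$ the zero $y_c$ is real; for $\Im c > 0$ it sits in the complex plane but the change of variables still makes sense along the real $y$-axis with a branch of the logarithm chosen in the upper half plane). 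First I would change the unknown variable to $z = U_s(y) - c$, turning the equation into a perturbation of $z \psi'' + \psi' = 0$, whose two homogeneous solutions are $1$ and $\log z$; this is the source of the $(U_s - c)\log(U_s - c)$ structure in \eqref{psifirstcase}. One solution $\psi_{1,\alpha}$ should be sought as a perturbation of the constant solution, $\psi_{1,\alpha} = 1 + (\text{higher order in } z)$, and the second $\psi_{2,\alpha}$ as a perturbation of the $z\log z$-type solution; in both cases I would set $\psi_{l,\alpha} = \psi_{l,\alpha}^r + (U_s - c)\log(U_s - c)\,\psi_{l,\alpha}^s$ and derive a coupled system of integral (Volterra) equations for the pair $(\psi_{l,\alpha}^r, \psi_{l,\alpha}^s)$ by integrating twice from $y_0$.

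The key step is to check that the integral operator obtained this way is a contraction on a small ball in $C^1$ of a neighbourhood of $(y_0, c_0)$, \emph{uniformly} down to $\Im c = 0$. The point is that although $1/(U_s - c)$ blows up as $c$ becomes real and $y \to y_c$, the combination that actually appears after extracting the $\log$ term is integrable: schematically, $\int \log(U_s(y') - c)\,dy'$ and $\int (U_s(y') - c)^{-1}\cdot(U_s(y') - c)\log(U_s(y') - c)\,dy'$ remain bounded and $C^1$ in $(y,c)$ up to the real axis, because each apparent pole is cancelled by a factor $U_s - c$ coming from the ansatz. Here one uses $U_s'(y_0) \ne 0$ so that $|U_s(y) - c| \gtrsim |y - y_c|$ and the elementary bounds $|z \log z| \le C$, $|\log z|$ integrable near $z = 0$. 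A small shrinking of the neighbourhood gives a contraction factor $< 1$, hence existence, uniqueness, and $C^1$ regularity of $(\psi_{l,\alpha}^r,\psi_{l,\alpha}^s)$ jointly in $y$ and $c$, including the continuous extension to $\Im c = 0$. Independence of $\psi_{1,\alpha}$ and $\psi_{2,\alpha}$ follows by computing their Wronskian at $y = y_0$, where by construction it is a nonzero constant (the $\log$ term contributes the nondegeneracy).

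For the last assertion, the resolvent $R(y_1, y_2, c)$ of Rayleigh equation is built in the standard way from the two local solutions: $R(y_1,y_2,c) g = \int$ against the Green's kernel $\psi_{1,\alpha}(y_1)\psi_{2,\alpha}(y_2) - \psi_{2,\alpha}(y_1)\psi_{1,\alpha}(y_2)$ divided by the Wronskian. Since the Wronskian is a fixed nonzero constant and both $\psi_{l,\alpha}$ are bounded on $[y_0 - \eps, y_0 + \eps]$ by the estimate just proved (the only non-smooth piece, $(U_s - c)\log(U_s - c)$, is bounded there), the kernel is bounded on $[y_0 - \eps, y_0 + \eps]^2$ uniformly for $c$ near $c_0$, and hence $R(y_0 - \eps, y_0 + \eps, c)$ is bounded for $\eps$ small. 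I expect the main obstacle to be the uniformity of the contraction estimate as $\Im c \to 0^+$: one must track the branch of the logarithm carefully and verify that every would-be singular integral is genuinely cancelled by the algebraic prefactor, rather than merely being a principal value; this is where the hypothesis $U_s'(y_0) \ne 0$ is essential and where the bookkeeping is heaviest.
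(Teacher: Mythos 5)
Your overall strategy --- plugging the ansatz (\ref{psifirstcase}) directly into the full Rayleigh equation and closing by a fixed point --- is a legitimate alternative to the paper's route (the paper first solves the $\alpha=0$ equation exactly, via the explicit solution $\psi_1^0=U_s-c$ and the quadrature (\ref{Rayl3}), extracts the $(U_s-c)\log(U_s-c)$ structure from that explicit integral through the expansion (\ref{expandI1}), and only then perturbs in $\alpha^2$ by an iteration that visibly preserves the form (\ref{psifirstcase})). But as written your argument has two genuine gaps. First, the local model is misidentified: with $z=U_s(y)-c$, Rayleigh's equation reduces near the critical layer to $z\,\partial_z^2\psi\approx b\,\psi$ with $b=U_s''(y_c)/U_s'(y_c)^2$, whose indicial roots are $0$ and $1$; the singular solution behaves like $1+b\,z\log z+\dots$, not like $\log z$. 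Your model $z\psi''+\psi'=0$ with solutions $1$ and $\log z$ would produce a bare $\log(U_s-c)$ singularity, contradicting the very form you are trying to establish, and it hides the fact that the coefficient of the $z\log z$ term is forced (proportional to $U_s''/U_s'^2$), not free.

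Second, and more substantively, the heart of the proof is exactly the step you postpone: showing that the coupled Volterra system for $(\psi^r_{l,\alpha},\psi^s_{l,\alpha})$ contracts in $C^1$ uniformly as $\Im c\to0^+$. Your schematic justification (``each apparent pole is cancelled by a factor $U_s-c$ coming from the ansatz'') fails for the worst term: substituting the ansatz produces the contribution $U_s''\,\psi^r/(U_s-c)$, which carries no cancelling prefactor when $\psi^r(y_c)\neq0$; two integrations of it generate precisely a new $(U_s-c)\log(U_s-c)$-type piece whose $y$-derivative is of size $\log|U_s-c|$, hence unbounded near the critical layer, so a naive $C^1$ contraction cannot close unless you specify how, at every step of the iteration, the regenerated logarithms are reabsorbed into the $\psi^s$ component and how the joint $C^1$ regularity in $(y,c)$ up to $\Im c=0$ survives that reshuffling. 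This bookkeeping is the actual content of the proposition, and it is what the paper's explicit computation (leading to (\ref{psi201})) replaces. Finally, note that the ``resolvent'' $R(y_0-\eps,y_0+\eps,c)$ in this paper is the transfer matrix $W(y_2)W^{-1}(y_1)$ built from the Wronskian matrix, so its boundedness also requires control of $\partial_y\psi_{l,\alpha}$ at $y_0\pm\eps$ (harmless, since these points stay at distance $\approx\eps$ from $y_c$, where the logarithms are bounded), not merely boundedness of a Green's kernel in $L^\infty$.
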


\Remarks
We choose the determination of the logarithm which is defined on $\cit - i \rit^+$, since $\Im c > 0$ and thus 
$U_s(y) - c$ has a negative imaginary part.
We note that $\psi_{1,\alpha,c}$ and $\psi_{2,\alpha,c}$ are defined for $\Im c \ge 0$ only.

\begin{proof}
For such profiles, the extremal layer $y_c$ must be defined slightly differently, namely by 
\beq \label{defextremal}
U_s(y_c) = \Re c .
\eeq
Note that $y_c$ is well defined provided $c$ is small enough, and that
$$
y_c = (U_s'(y_0))^{-1} \Re c + O \Bigl( | \Re c |^2 \Bigr).
$$ 
We first solve the simplified equation
\beq \label{Rayl2}
(U_s - c) \partial_y^2 \psi = U_s'' \psi,
\eeq
namely Rayleigh equation with $\alpha = 0$, whose solutions turn out to be fully explicit.
Namely,
\beq \label{psi1def}
\psi_1^0(y,c) = U_s(y) - c
\eeq
is a particular solution of (\ref{Rayl2}), which is $C^\infty$ in both variables $y$ and $c$, and
 an independent solution, with unit Wronskian, is given by
 \beq \label{Rayl3}
\psi_2^0(y,c) 
= (U_s(y) - c) \Bigl[ \int_{y_1}^y {du \over (U_s(u) - c)^2} + C_1 \Bigr],
\eeq
where $y_1$ and $C_1$ are arbitrary. We have to prove that $\psi_2^0$ is of the form (\ref{psifirstcase}). To study $\psi_2^0$,
we make the change of variables $v = U_s(u)$,  which is locally well defined and leads to 
\beq \label{defiI1}
I_1 := \int_{y_1}^y {du \over (U_s(u) - c)^2} 
= \int_{U_s(y_1)}^{U_s(y)} {dv \over U_s'(U_s^{-1}(v))  (v - c)^2} .
\eeq
We now expand $1/U_s'(U_s^{-1}(v))$ at $\Re c$, which gives
\beq \label{expandI1}
{1 \over U_s'(U_s^{-1}(v))} = {1 \over U_s'(y_c)} + \sum_{k=1}^N \alpha_k(\Re c) \, (v - \Re c)^k +   (v - \Re c)^{N+1} G(v,\Re c),
\eeq
where $\alpha_k(\Re c)$ and  $G(v,\Re c)$ are smooth functions.
We note that the first two terms of (\ref{expandI1}) form a polynomial in $v$. We expand this polynomial at $c$, which leads to
$$
I_1 =
{C_0(c) \over U_s(y) - c} + C_1(c) \log( U_s(y) - c) + H_1(y,c) +  H_2(y,c)
$$
where $C_0$, $C_1$ and $H_1$ are smooth functions and 
$$
H_2(y,c) =  \int_{U_s(y_1)}^{U_s(y)} {( v - \Re c)^{N+1} \over (v - c)^2} G(v,\Re c) \, dv .
$$
Note that through this expansion we reintroduced the imaginary part of $c$ in the formulas.
As $| v - \Re c | / | v - c| \le 1$,
we note that the derivatives of $H_2$ up to the order $N-1$, both in $y$ and $c$, are bounded, thus $\psi_2^0(y,c)$ is of the form
\beq \label{psi201}
\psi_2^0(y,c) = P(y,c) +  (U_s(y) - c) \log (U_s(y) - c) Q(y,c) ,
\eeq
where $P$ and $Q$ are $C^{N-1}$ function which can be extended to $\Im c = 0$ in a $C^{N-1}$ way.
This ends the description of the solutions of Rayleigh equation in the particular case $\alpha = 0$.

\medskip

We  now go back to the full  Rayleigh equation (\ref{Rayleighwithout1}), namely to the case $\alpha \ne 0$. 
Let 
\beq \label{defiG0}
G_0(x,y) = \Bigl\{
\begin{array}{c} \psi_{+,0}(x) \psi_{-,0}(y) \quad \hbox{if} \quad x < y, \cr
\psi_{-,0}(x) \psi_{+,0}(y) \quad \hbox{if} \quad x > y.
\end{array} 
\eeq
Let $\sigma > 0$ and let
\beq \label{defiGreen0}
 {\cal G} f(x) =  \int_{-\sigma}^\sigma G_0(x,y) f(x) \, dx.
\eeq
Then, for $- \sigma \le y \le + \sigma$,
$$
\partial_y^2 {\cal G} f - {U_s'' \over U_s - c} {\cal G} f = f .
$$
We have
$$
 | {\cal G} f(y) | \le |\psi_{-,0}(y)| \int_{-\sigma}^y | \psi_{+,0}(x) | |f(x) | \, dx
+ | \psi_{+,0}(y) | \int_y^{+\sigma} | \psi_{-,0}(x) | | f(x) | \, dx,
$$
thus,
\beq \label{boundit}
\| {\cal G} f \|_{L^\infty} \lesssim  \sigma  \| f \|_{L^\infty}.
\eeq
In particular ${\cal G}$ is a contraction in $L^\infty([-\sigma,+\sigma])$ provided $\sigma$ is small enough.

We now introduce the iterative scheme
\beq \label{iter0}
\psi^\pm_{n+1} = \psi_{\pm,0} + \alpha^2 {\cal G} \psi^\pm_n
\eeq
starting with $\psi^\pm_0 = 0$.
If $\sigma \alpha^2$ is small enough, $\psi_n^\pm(y)$ converge in $L^\infty$ to $\psi_{\pm,\alpha,c}(y)$ which 
satisfy $Ray_{\alpha,c} \psi_{\pm,\alpha,c}(y) = 0$.
Moreover, if $\psi_n$ is of the form (\ref{psifirstcase}), then $\psi_{n+1}$ is of the same form. Thus $\psi_{\pm,\alpha,c}$ is also of the same form.
The bound on the resolvent is straightforward.
\end{proof}


\subsection{Near an extremal point}


We now consider $y_0$ and $c_0$ such $U_s(y_0) = c_0$ and $U_s'(y_0) = 0$, but $U_s''(y_0) \ne 0$.
Note that in this case $y_0 = y_{extr}^l$ and $c_0 = c_{extr}^l$ for some $1 \le l \le L$.
The objective of this section is to study the behaviour of the solutions of Rayleigh equation for $y$ near $y_{extr}^l$ and $c$ near $c_{extr}^l$.
We begin with the explicit computation of the solutions of Rayleigh equation in the particular case $U_s(y) = y^2$ and $\alpha = 0$.


\subsubsection{The case $\alpha = 0$ and $U_s(y) = y^2$ \label{secy2}}


In this particular case, both $\psi_1^0(y,c) = U_s(y) - c$ and $\psi_2^0(y,c)$ are explicit, since, using (\ref{Rayl3}),
after some computations, up to a multiplication by $\sqrt{c}$,  we obtain
\beq \label{psi2explicit}
\psi_2^0(y,c) = - {y \over 2 \sqrt{c}} + {1 \over 4 c} (y^2 - c) \log \Bigl( {y + \sqrt{c} \over y - \sqrt{c}} \Bigr) .
\eeq
Note the "scale" $\sqrt{c}$, which naturally appears near the singularity, the homogeneity in $z = y / \sqrt{c}$, 
and the two neighbouring singularities, at $y = \pm \sqrt{c}$.

Let us first choose the determination of the logarithm which is defined on $\cit - \rit^+$. 
Then, when $y$ is positive and goes to $+\infty$, 
\beq \label{psi21}
\psi_2^0(y,c) = - {1 \over 3 z}  + O \Bigl( {1 \over z^3} \Bigr)
\eeq
and, when $y$ is negative and goes to $- \infty$, 
\beq \label{psi22}
\psi_2^0(y,c) =  i {\pi \over 2} z^2 + O(1).
\eeq
We note that $\psi_3^0(y,c) = \psi_2^0(-y,c)$ is another solution, with a symmetric behaviour, namely
$\psi_3^0(y,c)  \sim i \pi z^2 / 2$ when $y \to + \infty$ and $\psi_3^0(y,c) \sim + 1 / 3  z$ when $y \to - \infty$. Note that
\beq \label{relationpsi23}
\psi_3^0(y,c) = \psi_2^0(y,c) + {i \pi \over 2 c} (y^2 - c).
\eeq
We also note that as $c \to 0$, $c \psi_2^0$ goes to $0$ for positive $y$ and is of order $y^2$ for negative $y$, and symmetrically
for $\psi_3^0$. As $c \to 0$, these two particular solutions $\psi_2^0$ and $\psi_3^0$ get localized on one side of $0$ only.
This "localization property" 
is at the very origin of the "vortex depletion" introduced by F. Bouchet and H. Morita in \cite{Bouchet-Morita-2010}.


\subsubsection{The general case}


In the general case we will prove the following Proposition.

\begin{proposition} \label{propositiondeg}
If $(y_{extr},c_{extr})$ is an extremal point, then, in the vicinity of $(y_{extr},c_{extr})$, for $\Im c > 0$, there
exist two independent solutions $\psi_\pm$ to Rayleigh equation, 
whose Wronskian $W(c)$ is exactly of order $(c - c_{extr})^{-3/2}$, and which can be extended by continuity to $\Im c = 0$, except for $c = c_{extr}$.

When $c$ is real, they satisfy the following estimates.  In a small neighbourhood of $y_{extr}$,
for $y > y_{extr}$,
\beq \label{atplus1}
| \psi_-(y) | \lesssim {1 \over |y - y_{extr}|},  
\eeq
 for $y < y_{extr}$,
\beq \label{atplus2}
| \psi_-(y) | \lesssim { |y - y_{extr}|^2 \over |c - c_{extr}|^{3/2}} , 
\eeq
and when $| y - y_{extr} | \le |c - c_{extr}^l |^{1/2}$,
\beq \label{atplus3}
| \psi_-(y) | \lesssim | c - c_{extr}|^{-1/2}.
\eeq
The estimates on $\psi_+(y)$ are symmetric.

Moreover, the resolvent $R(y_{extr} - \eps, y_{extr} + \eps, c)$ and its inverse are exactly of order $O(|c - c_{extr}|^{-3/2})$,
provided $\eps > 0$ is small enough and provided $c$ is close enough to $c_{extr}$.
\end{proposition}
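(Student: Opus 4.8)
The plan is to mimic the strategy already used in the proof of Proposition~\ref{propfirst2}: first solve the $\alpha=0$ equation near the extremal point explicitly, extracting the relevant singular structure and the $(c-c_{extr})^{-3/2}$ scale, and then treat the full Rayleigh operator by a fixed point argument in a weighted $L^\infty$ space. The new feature here is that $U_s'(y_{extr})=0$ with $U_s''(y_{extr})\ne 0$, so the natural model is $U_s(y)-c \approx \tfrac12 U_s''(y_{extr})\,((y-y_{extr})^2 - \mu^2)$ with $\mu^2 = 2(c-c_{extr})/U_s''(y_{extr})$, and the computations of \S\ref{secy2} for $U_s(y)=y^2$ give the template: one has $\psi_1^0 = U_s(y)-c$ (smooth) and a second solution built from $\int (U_s-c)^{-2}$, which after the substitution $v=U_s(u)$ and expansion of $1/U_s'(U_s^{-1}(v))$ around the \emph{two} branch points $v=\Re c$ (equivalently $y = y_c^\pm$, the two roots near $y_{extr}$) produces the $\log((y-y_{extr}-\mu)/(y-y_{extr}+\mu))$ term, a term of order $z = (y-y_{extr})/\mu$, and smooth remainders, exactly as in \eqref{psi2explicit}.

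First I would fix $y_0 = y_{extr}$, $c_0 = c_{extr}$, write $U_s(y) - c = \tfrac12 U_s''(y_0)\big((y-y_0)^2 - \mu^2\big)(1 + O(|y-y_0|+|\mu|))$, and define $y_c^\pm$ as the two solutions of $U_s(y)=\Re c$ near $y_0$ (well defined since $U_s'' \ne 0$), with $y_c^+ - y_c^- $ of order $(c-c_{extr})^{1/2}$. Then I solve $(U_s-c)\partial_y^2\psi = U_s''\psi$ exactly as in \eqref{Rayl2}--\eqref{Rayl3}: $\psi_1^0 = U_s(y)-c$ and $\psi_2^0 = (U_s(y)-c)\int^y (U_s-c)^{-2}$. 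The substitution $v=U_s(u)$ (now two-to-one, handled on each monotone branch separately, or via $w = (y-y_0)$) turns the integral into one with a double pole at each of $v = c$'s two preimages; expanding the smooth coefficient $1/U_s'$ in powers of $(v-\Re c)$ around each branch point and re-expanding the resulting polynomial at $c$ yields $\psi_2^0 = P(y,c) + (U_s(y)-c)\log\!\big(\tfrac{y-y_0-\mu}{y-y_0+\mu}\big) Q(y,c) + (\text{smooth remainder})$ with $P,Q$ of controlled size — and crucially the coefficient of the $1/\mu$-type singular term is nonzero, giving $\psi_2^0$ the claimed $\mu^{-1} \sim (c-c_{extr})^{-1/2}$ size in the inner zone $|y-y_0| \lesssim |\mu|$, growth like $(y-y_0)^2/|\mu|^3$ for $y<y_0$ outside, and decay like $1/|y-y_0|$ for $y>y_0$, matching \eqref{atplus1}--\eqref{atplus3}. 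From $\psi_1^0$ and $\psi_2^0$ (and its mirror $\psi_3^0$, cf.\ \eqref{relationpsi23}) one selects the pair $\psi_\pm^0$ with the prescribed one-sided decay/localization, and computes the Wronskian $W(c)$: since $\psi_\pm^0$ have unit Wronskian up to normalization and the normalization forced by the one-sided boundary behavior picks up a factor of order $\mu^{-3} \sim (c-c_{extr})^{-3/2}$, this gives the stated order of $W(c)$.

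Next I upgrade from $\alpha=0$ to $\alpha \ne 0$ by the same iteration \eqref{iter0}: define the Green kernel $G_0$ from $\psi_\pm^0$, set $\mathcal{G}f(x) = \int_{-\sigma}^\sigma G_0(x,y)f(y)\,dy$, and solve $\psi = \psi_\pm^0 + \alpha^2 \mathcal{G}\psi$ by contraction. The subtlety is that $\mathcal{G}$ is no longer a bounded operator on plain $L^\infty$: the kernel now carries the $(c-c_{extr})^{-3/2}$ blow-up and the inner-zone spike. I would therefore run the contraction in the weighted space with norm $\|\psi\| = \sup_y |\psi(y)| / \Phi(y)$ where $\Phi$ is the right-hand side of the $\psi_\pm^0$ estimates (so $\Phi(y) \sim \min(|c-c_{extr}|^{-1/2}, |y-y_0|^2|c-c_{extr}|^{-3/2})$ for $y<y_0$ and $\Phi(y)\sim \min(|c-c_{extr}|^{-1/2}, |y-y_0|^{-1})$ for $y>y_0$), checking that $\mathcal{G}$ maps this weighted space to itself with norm $\lesssim \sigma$ — the key estimate being $\int_{-\sigma}^\sigma |\psi_+^0(x)\psi_-^0(x)/\Phi(x)|\,dx \lesssim \sigma$, which uses precisely that $\psi_+^0 \psi_-^0$ has \emph{integrable} singularities (the $\mu^{-1}\cdot\mu^{-1}$ peak lives on a set of width $\mu$, contributing $O(\mu)$, and the $|y-y_0|^2/|\mu|^3 \cdot 1/|y-y_0| = |y-y_0|/|\mu|^3$ factors integrate against width-$\mu$ regions to give $O(1)$). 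Because the correction $\alpha^2\mathcal{G}\psi$ is lower order in this norm, the fixed point $\psi_{\pm,\alpha,c}$ inherits all the pointwise bounds and the $(c-c_{extr})^{-3/2}$ Wronskian from $\psi_\pm^0$; the resolvent $R(y_0-\eps,y_0+\eps,c) = $ (the $2\times2$ solution operator) then has norm of order $W(c)^{\mp 1}$, i.e.\ $O(|c-c_{extr}|^{-3/2})$ for $R$ and $O(|c-c_{extr}|^{3/2})$ for its inverse, giving the last claim.

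The main obstacle I anticipate is the bookkeeping in Step~1: tracking which term in the expansion of $\psi_2^0$ carries the genuine $(c-c_{extr})^{-1/2}$ (and after normalization $(c-c_{extr})^{-3/2}$) singularity, verifying that its coefficient does not vanish — which is where the hypothesis $U_s''(y_{extr})\ne 0$ enters decisively — and simultaneously keeping enough regularity ($C^1$ in $y$ and $c$ away from $c=c_{extr}$) so that the continuity extension to $\Im c = 0$ goes through exactly as in Proposition~\ref{propfirst2}. The double branch point (two critical layers $y_c^\pm$ colliding at $y_{extr}$ as $c\to c_{extr}$) is what makes this qualitatively different from the critical-point case, and getting the uniform-in-$c$ weighted estimate on $\mathcal{G}$ near that collision is the technical heart of the argument.
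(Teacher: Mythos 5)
Your overall route is the paper's: explicit resolution of the $\alpha=0$ equation near the degenerate point via $\psi_1^0=U_s-c$ and $\psi_2^0=(U_s-c)\int(U_s-c)^{-2}$, modeled on the $U_s(y)=y^2$ computation of \S\ref{secy2}, followed by a contraction in the weighted space $\sup_y|\phi(y)|/|\Psi_-(y)|$ to treat $\alpha^2$ perturbatively, and a Wronskian-matrix computation for the resolvent; this matches the paper's Step 1/Step 2 structure. There is, however, a concrete error at the end: you conclude that the inverse of the resolvent is $O(|c-c_{extr}|^{3/2})$, whereas the proposition asserts (and the paper proves) that $R(y_{extr}-\eps,y_{extr}+\eps,c)$ \emph{and its inverse} are both exactly of order $|c-c_{extr}|^{-3/2}$. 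Your conclusion cannot be repaired into the heuristic ``norm of order $W(c)^{\mp1}$'': since Rayleigh's equation written as $\psi''=(\alpha^2+U_s''/(U_s-c))\psi$ has no first-order term, the transfer matrix $R=W(y_2)W(y_1)^{-1}$ has determinant $1$, so for a $2\times2$ matrix $\|R^{-1}\|$ is of the same order as $\|R\|$; equivalently $R^{-1}(y_1,y_2)=R(y_2,y_1)$ is the transfer across the extremal point in the other direction, which is equally large because $\psi_-$ is of size $|c-c_{extr}|^{-3/2}$ on one side while $\psi_+$ is of that size on the other. The correct argument is the paper's: estimate the entries of the Wronskian matrix $W(y)$ separately for $y<y_{extr}$ and $y>y_{extr}$ and multiply.

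A second, softer point: in Step 1 you propose to expand $1/U_s'(U_s^{-1}(v))$ in powers of $v-\Re c$ ``around each branch point'' as in Proposition~\ref{propfirst2}. This is not uniform in $c$: $U_s'(y_c^\pm)\sim|c-c_{extr}|^{1/2}$, so the coefficients of such an expansion blow up as the two critical layers collide at $y_{extr}$, and a per-branch treatment does not by itself give the claimed sizes of $P$, $Q$ nor identify the surviving $(c-c_{extr})^{-1/2}$ (after normalization $(c-c_{extr})^{-3/2}$) singularity. The uniform control is exactly what the paper's construction supplies: rescaling $z=y/\sqrt{\Re c}$, mapping $U_s$ onto the exact parabola $\beta(v-z_-)(v-z_+)$ by the change of variables (\ref{change}), noting that the resulting numerator $G(v)$ of (\ref{expressionG}) has $\partial_v^nG=O((\Re c)^{n/2})$, and interpolating it by a cubic matching $G,G'$ at the two roots $z_{1,2}$ so that the leading part of $I_1$ is computed exactly through the primitives $J_0,\dots,J_3$. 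You correctly flag this bookkeeping as the main obstacle, but it (together with the $\partial_c,\partial_c^2$ bounds that the iteration must also propagate) is the actual content of the proof rather than a detail; as written your plan assumes the outcome of that analysis rather than providing it.
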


\begin{proof}
In order to simplify the formulas, 
we change the $y$ variable into $y - y_{extr}$, and the $c$ variable into $c - c_{extr}$ in order to get $y_{extr} = 0$, $c_{extr} = 0$ and
we further rescale $U_s(y)$ such that $U_s''(0) = 2$.
We begin with the particular case $\alpha = 0$.


\subsubsection*{\it Step $1$: Explicit computations in the case $\alpha = 0$}


We first explicitly solve the simplified equation (\ref{Rayl2}), namely Rayleigh equation with $\alpha = 0$,
which leads, as previously, to the explicit smooth solution
\beq \label{psi10def}
\psi_1^0(y,c) = U_s(y) - c,
\eeq
and to another solution $\psi_2^0(y,c)$,  defined by (\ref{Rayl3}), namely by
$$
\psi_2^0(y,c) = (U_s(y) - c) I_1(y)
$$
where
 $$
 I_1(y) = \int_{y_1}^y {du \over (U_s(u) - c)^2} .
 $$
We will choose the lower bound $y_1$ later and we will compute $I_1$ almost explicitly.
 The first step is to replace $U_s(y) - c$ by a polynomial of degree $2$ in the denominator of $I_1$.
 
Let us first assume that $\Re c > 0$. Then, provided $c$ is small enough, there exist
two solutions $y_+$ and $y_-$ to 
$$
U_s(y) = \Re c.
$$ 
Note that, in this definition of the extremal layer, we use the real part of $c$ only, since $U_s$ may not be defined for complex numbers.
The imaginary part of $c$ will be reintroduced later. We have, for some constants $A$ and $B$ which depend on
$U_s'''(0)$ and $U_s''''(0)$,
$$
y_\pm = \pm \sqrt{\Re c} + A \Re c \pm B (\Re c)^{3/2} +  O(\Re c^{2}).
$$ 
We rescale $y$ and $y_\pm$ by introducing
$$
z = {y  \over  \sqrt{\Re c}}, \qquad
z_\pm = {y_\pm \over \sqrt{\Re c} } = \pm 1 + O(\sqrt{\Re c}).
$$
We note that
$$
z_+ + z_- = 2 A \sqrt{\Re c} +  O(({\Re c})^{3/2}), \qquad z_+ - z_- = 2 + O(\Re c).
$$
In particular, 
\beq \label{dede1}
\partial_c(z_+ - z_-) = O(1), \qquad \partial_c^2 (z_+ - z_-) = O(1),
\eeq
whereas 
\beq \label{dede2}
\partial_c z_\pm = O(c^{-1/2}), \qquad\partial_c^2 z_\pm = O(c^{-3/2}).
\eeq
Let
$$
V(u) = {U_s(\sqrt{\Re c}  \, u) - \Re c \over \Re c} .
$$
We note that, locally near $0$, $V(u)$ is monotonic and increasing for $u > 0$ and monotonic and decreasing for $u < 0$. Its minimum,
reached at $0$, is exactly $- 1$, and we have $V(z_\pm) = 0$. In order to replace $V(u)$ by a polynomial of degree $2$,
we introduce $\theta(v)$ defined by
$$
\theta(v) =  \beta (v - z_-) (v - z_+),
$$
where $\beta$ is such that $\min_v \theta(v) = - 1$. The minimum of $\theta$ is reached at $(z_+ + z_-)/2 = O(\sqrt{\Re c})$, thus
\beq \label{defibeta}
\beta = 4^{-1}   (z_+ - z_-)^2 = 1 + O(\Re c).
\eeq
We note that $V(u)$ and $\theta(u)$ have the same zeros and the same minimum.
We now "map" $V(u)$ onto the parabola $\theta(v)$ by making the change of variables $u \to v$ where 
\beq \label{change}
\theta(v) = V(u).
\eeq
More precisely, if $u > 0$ then we define $v$ to be the unique solution of (\ref{change}) which is larger than $(z_+ + z_-)/2$,
and similarly if $u < 0$.  The resolution of (\ref{change}) gives
\beq \label{expression}
v = {z_- + z_+ \over 2} \pm \sqrt{ U_s(\sqrt{\Re c} \, u) \over \beta \Re c } 
\eeq
with a "$+$" if $u > 0$ and a "$-$" if $u < 0$.
This change of variables then gives
$$
I_1(y) = \sqrt{\Re c} \int_{y_1 / \sqrt{\Re c}}^{y / \sqrt{\Re c} } {du \over \Bigl[ \Re c \, V(u)  -i \Im c \Bigr]^2} 
$$
\beq \label{I1bbb}
= \sqrt{\Re c} \int_{v(y_1 / \sqrt{\Re c})}^{v(y/ \sqrt{\Re c})} {G(v) \over \Bigl[ \beta \Re c \,  (v - z_+) (v - z_-) -i \Im c \Bigr]^2} dv
\eeq
where, using (\ref{expression}), 
\beq \label{expressionG}
G(v) =  2 \beta \sqrt{U_s(\sqrt{\Re c} \, u) \over U_s'^2(\sqrt{\Re c} \, u)}  .
\eeq
The denominator of (\ref{I1bbb}) is now exactly a polynomial of degree $2$. 
To go on with the computations, we introduce $z_1$ and $z_2$,  the two (complex) solutions of
$$
\beta \Re c (v - z_- ) (v - z_+) = i \Im c .
$$
We have
\begin{equation*} \begin{split}
\sqrt{\beta \Re c} \, z_{1,2} &= \sqrt{\beta \Re c} {z_- + z_+ \over 2}
\pm {1 \over 2} \sqrt{ \beta \Re c \, (z_- - z_+)^2 + 4 i \Im c}
\\ &=  \pm \sqrt{c} + O(c) .
\end{split} \end{equation*}
If $\Im c = 0$, then $z_{1,2} = z_\pm$.
Note that we have thereby "reintegrated" the imaginary part of $c$ and that
\beq \label{newI1}
I_1 = {1 \over \beta^2 (\Re c)^{3/2}}  \int_{v(y_1 / \sqrt{\Re c})}^{v(y  / \sqrt{\Re c})} {G(v) \over  (v - z_1)^2 (v - z_2)^2 } \, dv.
\eeq
The next step to compute $I_1$ is to approximate $G$ by a polynomial. This will allow to compute the leading orders terms of $I_1$.
We thus approximate this numerator by a polynomial of degree $3$.
Using (\ref{expressionG}), we note that, provided $v$ is bounded, 
$$
G(v) = 1 + O( (\Re c)^{1/2})
$$
and that
\beq \label{higher}
\partial_v^n G(v) = O((\Re c)^{n/2})
\eeq 
for $n = 1$, $2$ and $3$.
Let $G_1$ be the polynomial of degree $3$ which satisfies 
\beq \label{conditions}
G_1(z_{1,2}) = G(z_{1,2}), \qquad
 G_1'(z_{1,2}) = G'(z_{1,2}).
 \eeq
Then, using (\ref{higher}) with $n = 4$, we get
$$
G(v) = G_1(v) + (\Re c)^2 \, (v - z_1)^2 (v - z_2)^2 H(v)
$$
for some smooth function $H$. This leads to $I_1(y) = I_2(y) + I_3(y)$ where
\beq \label{defiI2}
I_2(y) = {1 \over \beta^2 (\Re c)^{3/2}}  \int_{v(y_1 / \sqrt{\Re c})}^{v(y  / \sqrt{\Re c})} {G_1(v) \over  (v - z_1)^2 (v - z_2)^2 } \, dv
\eeq
and 
\beq \label{defiI3}
 I_3(y) = (\Re c)^{1/2}  \int_{v(y_1 / \sqrt{\Re c})}^{v(y/\sqrt{\Re c})}  H(v)  \, dv .
\eeq
If we approximate $G$ by higher order polynomials, we can get a better description of the remainder term $I_3$.

We now explicitly compute $I_2$.
We recall that  $P_0 = 1$, $P_1 = v $, $P_2 = (v - z_1) (v - z_2)$
and $P_3 = v (v - z_1) (v -z_2)$ form a basis of the polynomials of degree $\le 3$. Thus $G_1$ is a combination of
$P_j$ for $0 \le j \le 3$, namely
\beq \label{decompG1}
G_1(v) = \sum_{j = 0}^3 \alpha_j P_j(v),
\eeq
where, taking into account (\ref{higher}) and (\ref{conditions}),
\begin{equation*} \begin{split}
\alpha_0 &=  \alpha^0_0 + \alpha^1_0 c + O(c^{3/2}),
\\ \alpha_1 & = \alpha_1^0 c^{1/2} + \alpha^1_1 c^{3/2} + O(c^2),
 \\ \alpha_2 &= \alpha_2^0 c + O(c^2), 
 \\  \alpha_3 &= \alpha_3^0 c^{3/2} + O(c^2).
\end{split} \end{equation*}
Using (\ref{decompG1}),  $I_2$ is a linear combination of ${\cal J}_j$ for $0 \le j \le 3$, where
\beq \label{defiJj}
{\cal J}_j =   
 \int_{v(y_1 / \sqrt{\Re c})}^{v(y/ \sqrt{\Re c})} {P_j(v) \over  (v - z_1)^2 (v - z_2)^2 } \, dv.
\eeq
We then have
\beq \label{computcalJ}
{\cal J}_j = J_j(v) - J_j(v_1)
\eeq
where $v$ denotes $v(y / \sqrt{\Re c})$ and $v_1$ denotes $v(y_1/ \sqrt{\Re c})$.
We now choose $y_1$ to be a small positive number.

We now explicitly compute the primitives $J_j$ of $P_j(v) / (v - z_1)^2 (v - z_2)^2$, which gives
\begin{equation*} \begin{split}
  J_0(v) &= {2 \over (z_2 - z_1)^3} \log \Bigl( {v - z_1 \over v - z_2} \Bigr) 
- {1 \over (z_2 - z_1)^2} \Bigl[ {1 \over v - z_1} + {1 \over v - z_2} \Bigr],
\\
 J_1(v) &= {z_1 + z_2 \over (z_2 - z_1)^3} \log \Bigl( {v - z_1 \over v - z_2} \Bigr) 
 - {1 \over (z_2 - z_1)^2} \Bigl[ {z_1 \over v - z_1} + {z_2 \over v - z_2} \Bigr],
\\
  J_2(v) &= {1 \over z_1 - z_2} \log \Bigl( { v - z_1 \over v - z_2 } \Bigr), 
\\
J_3(v) &={1 \over z_1 - z_2} \Bigl[ z_1 \log (v - z_1) - z_2 \log(v - z_2) \Bigr].
\end{split} \end{equation*}
 From now on, to simplify the notations, we assume that $c$ is real and define
$$
\sigma = \beta^{2/3} \sqrt{ c} = \sqrt{ c} + O( c^{3/2}).
$$
We then have the following explicit formula for $I_2(y)$
\beq \label{I2expl}
 \sigma^3 I_2(y) =   \alpha_0 {\cal J}_0 +  \alpha_1 {\cal J}_1 +  \alpha_2 {\cal J}_2 +   \alpha_3 {\cal J}_3,
\eeq
where $\alpha_j = O(c^{j/2})$ for $j = 0, 1, 2,$ and $3$.

We now study in detail the various $J_j$. Let $c$ be real to fix the ideas.
Then, for $v > z_1$, we have
 $$
J_0 =  {2 \over (z_2 - z_1)^3} \sum_{n \ge 0} {z_2^n - z_1^n \over n v^n}
- {1 \over (z_2 - z_1)^2} \sum_{n \ge 0} {z_1^{n-1} + z_2^{n-1} \over v^n} . 
$$
Note that this series converges as long as $v > z_1$.
If $n$ is even, then
$$
z_2^n - z_1^n = (z_1 + z_2) n + O( [z_1 + z_2]^3) = O(c^{1/2}),
$$
$$
z_1^{n-1} + z_2^{n-1} = (n-1) (z_1 + z_2) + O( [z_1 + z_2]^3) = O(c^{1/2}),
$$
but on the contrary, if $n$ is odd,
$$
z_2^n - z_1^n = 2 + O( [z_1 + z_2]^2),
$$
and
$$
z_1^{n-1} + z_2^{n-1} = 2 + O( [z_1 + z_2]^2).
$$
Thus, after some computations,
$$
J_0 = \sum_{n \ge 3} a_n v^n,
$$ 
for some constants $a_n$ of the form $a_n^0 + O([z_2 - z_1]^2)$, namely of order $O(1)$ if $n$ is odd, and of the form
$a_n^0 (z_1 + z_2) + O([z_2- z_1]^3)$, namely only of order $O(|c|^{1/2})$  if $n$ is even.
 
 We observe that $J_1$, $J_2$ and $J_3$ have similar properties.
 First $J_1$ starts at $n = 2$, and odd coefficients are of order $O(c^{1/2})$, even coefficients being bounded.
 Moreover, $J_0$ starts at $n = 1$,  odd coefficients being of order $O(1)$ and even coefficients  of order $O(c^{1/2})$.
 The situation is similar for $J_0$ which begins with a $\log v$.
Thus,  using (\ref{I2expl}), 
\beq \label{I2expl2}
\sigma^3 I_2 = b_0 c^{3/2} \log v + b_1 {c \over v} + b_2 {c^{1/2} \over v^2} + {b_3 \over v^3} + b_4 {c^{1/2} \over v^4}  + {b_5 \over v^5}  +  \cdots,
\eeq
where $b_j = b_j^0 + O(c)$.

When $v < z_2$, the expansions of the various $J_j$ are the same, up to  the logarithm which changes by $2 i \pi$.
This leads to another term 
$$
 K = {4 i \pi \alpha_0 \over (z_2 - z_1)^3} + 2 i \pi c^{1/2}  \alpha_1 {z_1 + z_2 \over (z_1 - z_2)^2 }
+ {2 i \pi c \alpha_2  \over z_1 - z_2} + 2 i \pi c^{3/2} \alpha_3 
= {i \pi \over 2} \alpha_0 + O(c).
$$
Moreover, when $c$ is real,
\beq \label{sqrtRec}
v = {z_+ + z_- \over 2} + \sqrt{ U_s(y) \over \beta c} .
\eeq
Let us turn to bounds on $I_3$. The integrand of $I_3$ is bounded, hence $|I_3| \lesssim |y - y_1|$ and similarly, 
$| \partial_y I_3 | \lesssim 1$.

We now construct a first solution $\Psi_-$ defined by
$$
\Psi_-(y) = c^{3/2} (U_s(y) - c)  [ I_2 + I_3].
$$
Then, as $y \to - \infty$
$$
\Psi_-(y) = (U_s(y) - c) \Bigl[ \sigma^{-3/2} K + 
 b_0  \log [c^{1/2} v] + { b_1 \over [c^{1/2} v]} + {b_2  \over [c^{1/2} v]^2} + {b_3 \over [c^{1/2} v]^3}
 $$
 \beq \label{Psiminus}
  + {b_4 c \over [c^{1/2} v]^4}  + {b_5 c \over [c^{1/2} v]^5}  
   + {b_6 c^2 \over [c^{1/2} v]^6}  + {b_7 c^2 \over [c^{1/2} v]^7} 
   +  \cdots + I_3
\Bigr],
\eeq
and as $y \to + \infty$, we have the same expansion, up to the term $\sigma^{-3/2} K$ which is not present.
Note that the $\partial_c \Psi_-$ and $\partial_c^2 \Psi_-$ have similar expressions.

Let us study the various terms of (\ref{Psiminus}). This expression is valid for $v > z_2$, namely for $y > y_+(c)$ which is of order $c^{1/2}$.
Moreover, $c^{1/2} v$ is of order $y$, namely between $O(|c|^{1/2})$ and $O(1)$.
For $y$ of order $O(1)$, all the terms are bounded except the first one, $\sigma^{-3/2} K$, which is large and of order $|c|^{-3/2}$.
For $y$ of order $O(|c|^{1/2})$, the largest term is $b_3 [c^{1/2} v]^{-3}$ which is of order $\sigma^{-3/2}$, that is of the same order
as $\sigma^{-3/2} K$, which allows us to "match" $y > y_+$ with $y < y_-$. 

This leads to, for $y \ge   y_+$,
\beq \label{atplus}
| \Psi_-(y) | \lesssim {1 \over y},
\eeq
and for $y \le   y_-$,
\beq \label{atpluss}
| \Psi_-(y) |  \lesssim |c|^{-3/2} y^2.
\eeq
Similar computations  between $y_-$ and $y_+$ show that $\Psi_-$ is of order $O(|c|^{-1/2})$ in this area.

Now each time we differentiate (\ref{Psiminus}) with respect to $y$, as $|\partial_y [c^{1/2} v] | \lesssim 1$, we loose a factor $y$. This leads to, 
for $y \ge   y_+$,
\beq \label{derivPsiy}
| \partial_y \Psi_-(y) | \lesssim {1 \over y^2},
\eeq
 for $y \le   y_-$,
\beq \label{atpluss}
| \partial_y \Psi_-(y) | \lesssim  |c|^{-3/2} y
\eeq
and for $y_- \le y \le  y_+$,
\beq \label{atplusss}
| \partial_y \Psi_-(y) | \lesssim |c|^{-1}.
\eeq
Moreover, using (\ref{sqrtRec}) and $\partial_c \beta = O(1)$, we have
\beq \label{cv}
c^{1/2} v  = A c + O(c^2) + \beta^{-1/2} \sqrt{U_s(y)},
\eeq
\beq \label{cv1}
\partial_c [c^{1/2} v  ] =  A + O(c) + O(\sqrt{U_s(y)}),
\eeq
and
\beq \label{cv2}
\partial_c^2 [c^{1/2} v ] = O(1) + O(\sqrt{U_s(y)}).
\eeq
We note that, if $|y| \gtrsim |c|^{1/2}$ and $|y| \lesssim 1$, then $[|c|^{1/2} v]^{-1} \lesssim |c|^{-1}$ and $[|c|^{1/2} v]^{-1} \lesssim |y|^{-1}$.
Hence, as $\partial_c [c^{1/2} v]$ is bounded by $O(1 + |y|)$, one derivative with respect to $c$ leads to the loss
of a factor $|c|^{-1}$ or $|y|$.

For $y \ge  y_+$, we have
\beq \label{atplussd1}
| \partial_c \Psi_-(y) | \lesssim {1 \over y [c^{1/2} v]} \lesssim \min \Bigl( {1 \over y |c|},  {1 \over |y|^2} \Bigr),
\eeq
\beq \label{atplussd2}
| \partial_c^2 \Psi_-(y) | \lesssim {1 \over y [c^{1/2} v^2]}   \lesssim \min \Bigl( {1 \over y |c|^2},  {1 \over |y|^3} \Bigr) ,
\eeq
and for $y <  y_-$,
\beq \label{atplussd3}
| \partial_c \Psi_-(y) | \lesssim |c|^{-3/2} y , \qquad
| \partial_c^2 \Psi_-(y) | \lesssim |c|^{-3/2}  .
\eeq
Moreover, for $ y_- \le y \le  y_+$,
\beq \label{atplussd4}
| \partial_c \Psi_-(y) | \lesssim |c|^{-3/2} , \qquad
| \partial_c^2 \Psi_-(y) | \lesssim |c|^{-5/2}  .
\eeq
In the case $\Re c < 0$, we introduce $\theta(v)$ defined by $\theta(v) = - \Re c + v^2$
and make the change of variables $u \to v$ where $\theta(v) = U_s(x) - c$.
The computations are then similar to the case $\Re c > 0$.

We construct $\Psi_+$ in a symmetric way.
Direct computations show that the Wronskian of $\Psi_-$ and $\Psi_+$ and the resolvent $R(-\eps,\eps,c)$ are   exactly of order $O(|c|^{-3/2})$.
 This ends the study of the case $\alpha = 0$.


\subsubsection*{\it Step $2$:  The case $\alpha \ne 0$, near $0$}


We  now go back to the full  Rayleigh equation (\ref{Rayleighwithout1}), namely to the case $\alpha \ne 0$. 
Near the extremum, $\alpha^2$ is negligible with respect  $U_s'' / (U_s - c)$, thus we follow a perturbative approach.
We recall that the Wronskian between $\Psi_-$ and $\Psi_+$ is of order $O(c^{-3/2})$. Let us introduce $G_1$ the Green function of 
$$
\partial_y^2 \phi =   {U_s'' \over U_s - c} \phi + \delta_x
$$
which is explicitly given by 
 \beq \label{defiG1}
G_1(x,y) = W^{-1} \Bigl\{
\begin{array}{c} \Psi_+(x) \Psi_-(y) \quad \hbox{if} \quad x < y, \cr
\Psi_-(x) \Psi_+(y) \quad \hbox{if} \quad x > y.
\end{array} 
\eeq
Let $\sigma > 0$ and let
\beq \label{defiGreen1}
 {\cal G} f(x) =  \int_{-\sigma}^\sigma G_1(x,y) f(x) \, dx.
\eeq
Let us first bound ${\cal G}$. 
Let us define the norm $\| \phi \|_\Psi$ by
$$
\| \phi \|_\Psi = \sup_{-\sigma \le x \le \sigma} | \Psi_-(x)|^{-1}  \, | \phi(x)| .
$$
We have
$$
{\cal G}f(y)  
  =  \Psi_-(y)  \int_{-\sigma}^y  { \Psi_+(x)  \over W} f(x)  \, dx,
+  { \Psi_+(y) \over W}  \int_y^{+\sigma}  \Psi_-(x)   f(x)  \, dx.
$$
 As $W^{-1} \Psi_+$ is bounded, the first integral $I_1$ is bounded by
$$
| I_1 | \lesssim \sigma | \Psi_-(y) | .
$$
Let us turn to the second integral $I_2$.
For $y > y_+$, $| \Psi_-(y)|^2 \lesssim |y|^{-2}$, thus the integral is bounded by $|y|^{-1}$ and, as $|W^{-1} \Psi_+| \lesssim \sigma^2$, 
we have $| I_2| \lesssim \sigma^2 | \Psi_-(y)|$. The other cases are similar. 

Thus, for $|c|$ small enough,
\beq \label{eestG}
  \| {\cal G} f(y) \|_\Psi \lesssim  \sigma \| f \|_\Psi
\eeq
and ${\cal G}$ is  a contraction in this space provided $\sigma$ is small enough.

We  define a first solution $\psi_-$ of Rayleigh equation with $\alpha \ne 0$  through the iteration
\beq \label{iter00}
 \phi_{n+1} = \Psi_- + \alpha^2 {\cal G}  \phi_n,
\eeq
starting with $\phi_0 = \Psi_-$.
Then   $\phi_n$ converges in the norm $\| \cdot \|_\Psi$ to a solution
 $\phi_-$ of $Ray_{\alpha,c} \, \phi_- = 0$, which in fact has the same bounds as $\Psi_-$.
 Symmetrically, we construct another independent solution $\phi_+$ which satisfies symmetric bounds.

Estimates on first and second derivatives with respect to $c$ may then be obtained by differentiating the iteration scheme (\ref{iter00}).
More precisely, we have
$$
\partial_c \phi_{n+1} = \partial_c \psi_1^0 + \alpha^2 {\cal G} \partial_c \phi_n + \alpha^2 (\partial_c {\cal G}) \phi_n
$$
and
$$
\partial_c^2 \phi_{n+1} = \partial_c^2 \psi_1^0 + \alpha^2 {\cal G} \partial_c^2 \phi_n + 2  \alpha^2 (\partial_c {\cal G})\partial_c \phi_n
+ \alpha^2 ( \partial_c^2 {\cal G}) \phi_n,
$$
where $\partial_c {\cal G}$ and $\partial_c^2 {\cal G}$ have kernels $\partial_c G_0$ and $\partial_c^2 G_0$. 
According to (\ref{atplussd1}) to (\ref{atplussd4}), each derivative in $c$ leads to a loss of a factor $c$, thus the estimates are similar, and
(\ref{atplussd1}) and (\ref{atplussd4}) hold true for $\psi_-$.
In a symmetric way, we construct an independent solution $\psi_+$.

Let us turn to the estimate on the resolvent. The Wronskian matrix is
$$
W(y) = \Bigl( \begin{array}{cc} 
\psi_-(y) & \psi_+(y) \cr
\partial_y \psi_-(y) & \partial_y \psi_+(y) \cr
\end{array} \Bigr) 
$$
and the resolvent is
$$
R(y_1,y_2) = W(y_2) W^{-1}(y_1) .
$$
For $y < 0$,
$$
W(y) = \Bigl( \begin{array}{cc}
O(c^{-3/2}) & O(1) \cr
O(c^{-3/2}) & O(1) \cr \end{array} \Bigr) 
$$
and conversely for $y > 0$. Thus, if $y_1 > 0 > y_2$,
$R(y_1,y_2) = O(c^{-3/2})$, and is in fact exactly of that order, and similarly for $y_1 < 0 < y_2$.
 \end{proof}


\subsection{At infinity}


It remains to handle the case where $y$ is large.
We distinguish two cases.
If $c_0 \ne U_+$, then, for large $y$, $U_s(y) - c$ does not vanish. Rayleigh equation is regular at infinity and there exist
two solutions  $\psi_{1,\alpha,c}$ and $\psi_{2,\alpha,c}$, 
with unit Wronskian, which behave like $e^{\pm \alpha y}$ for $y \ge A$ with $A$ large enough.

 If on the contrary $c_0 = U_+$,
 then $U_s(y) - c_0 \sim C_+ e^{- \beta y}$ at infinity for some positive constant $\beta$, hence, for this particular value of $c_0$,
$\psi_{1,\alpha,c}$ and $\psi_{2,\alpha,c}$ asymptotically behave like $e^{\pm (\alpha^2 + \beta^2) y}$. 
However, standard arguments give the existence of a smooth solution $\psi_{1,\alpha,c}$ which goes to $0$ exponentially
fast at infinity. We define $\psi_{2,\alpha,c}$ through the Wronskian.


\subsection{Proof of Propositions \ref{extension} and \ref{localization} \label{construction}}


In this section we prove Propositions \ref{extension} and \ref{localization}.

\medskip

{\bf Proof of Proposition \ref{extension}.}
For each couple $(y_0,c_0)$, including $y_0 = +\infty$, we can  locally construct  two solutions $\psi_{1,\alpha,c}$ and
$\psi_{2,\alpha,c}$ of Rayleigh equation. 

By compactness, we can thus pave $\rit^+ \times \{ c, | \Re c | \le A, | \Im c | \le A\}$
for any arbitrarily large positive number $A$, by a finite number of tiles of the form $I(y_p) \times J(c_q)$ 
for some positive numbers $(y_p)_{1 \le p \le P}$ and complex numbers $(c_q)_{1 \le q \le Q}$,
where $I(y_p)$ is either $[y_p - \sigma, y_p + \sigma]$ or $[y_p - \sigma, + \infty)$,
 and $J(c_q)$ is the ball of centre $c_q$ and radius $\sigma$,
provided $\sigma$ is small enough.

We can further impose that some of these $c_q$ are real. They are then called $c_r^0$, and we can enforce that
$[-A - i \eps, A+ i \eps ] \subset \cup_r J(c_r^0)$ provided $\eps$ is small enough. We can moreover enforce that either
$c_r^0$ is a extremal value or $J(c_r^0)$ contains no extremal value.

On each $I(y_p) \times J(c_q)$, we have two independent solutions of Rayleigh equations, constructed 
by Propositions \ref{propfirst2} or \ref{propositiondeg} .

Let us now construct two independent global solution $\psi_{\pm,\alpha,c}$.
Let us fix some $\alpha > 0$. For $y$ large enough, we define $\psi_{-,\alpha,c}$ to be the solution
which goes to $0$ at infinity and is equivalent to $e^{- |\alpha| y}$ (if $c \ne U_+$).
We then extend this solution towards $y = 0$, using either the construction near a regular, critical or extremal
 point. This gives a first global solution, defined on $\rit^+$. Using Proposition $3.1$ and $3.2$,
the construction can be done even for $\Re c = 0$, except  the extremal velocities, which proves the proposition.

\medskip

{\bf Proof of Proposition \ref{localization}.}
We define another solution, called $\psi_{+,\alpha,c}$, using the Wronskian, through
\beq \label{defiW}
\psi_{+,\alpha,c}(y) =  \psi_{-,\alpha,c}(y) \int_y^{+\infty} {1 \over \psi_{-,\alpha,c} ^2(x) } dx .
\eeq
We note that $|\psi_{+,\alpha,c}(y)|$ goes to $+\infty$ as $y \to + \infty$
and behaves like $e^{+ | \alpha | y}$  (if $c \ne U_+$).

If $c$ is not  close to an extremal velocity, then,
 in this process, we never go close to an extremal point $(y_{extr}^l,c_{extr}^l)$. All the resolvents are thus bounded,
and the corresponding solutions $\psi_{-,\alpha,c}$ and $\psi_{+,\alpha,c}$ are bounded, locally in $c$ and uniformly in $y$ for $\psi_{-,\alpha,c}$,
and locally in $c$ and  $y$ for $\psi_{+,\alpha,c}$.

If, on the contrary, $c$ is close to some extremal $c_{extr}^l$, then we must go near $(y_{extr}^l,c_{extr}^l)$, where the resolvent
$R(y_{extr}^l + \sigma, y_{extr}^l - \sigma,c)$ is only bounded by
$(c - c_{extr}^l)^{-3/2}$. As a consequence, $\psi_{-,\alpha,c}(y)$ may not be bounded uniformly in $c$ for $y < y_{extr}$.
We only have $| \psi_{-,\alpha,c}(y) | \lesssim | c - c_{extr}^l |^{-3/2}$.
The study of the magnitude of $\psi_{-,\alpha,c}(y)$ for $y < y_{extr}^l$ is a key point in the study of the localization property
and of the vorticity depletion property.

Let $\sigma$ be small. Then on $[y_{extr}^l - \sigma, y_{extr}^l + \sigma]$, two independent solutions $\psi_\pm$ are described
in Proposition \ref{propositiondeg}, with the Wronskian $W_1$ of order $(c - c_{extr}^l)^{-3/2}$.
We decompose $\psi_{\pm,\alpha,c}$ on this basis 
\beq \label{decomptilde}
 \psi_{\pm,\alpha,c}(y) = a_\pm^1 \psi_-(y) + a_\pm^2 \psi_+(y).
\eeq
At $y = y_{extr}^l + \sigma$, this gives
\begin{equation} \begin{split} \label{transfert}
\Bigl( \begin{array}{c} 
a_\pm^1 \cr a_\pm^2 \cr \end{array} \Bigr) 
&= {1 \over W_1}  \Bigl( \begin{array}{cc}
\psi_{+}'(y+\sigma) & - \psi_{+}(y+\sigma) \cr
- \psi_{-}'(y+\sigma) & \psi_{-}(y+\sigma) \cr \end{array} \Bigr)
\Bigl( \begin{array}{c} 
\psi_{\pm,\alpha,c} (y+\sigma) \cr \psi_{\pm,\alpha,c}' (y+\sigma) \cr \end{array} \Bigr) 
\\ &=   \Bigl( \begin{array}{cc}
O ( 1)  &  O ( 1)  \cr
 O (  | c - c_{extr}^l |^{3/2}) & O (  | c - c_{extr}^l |^{3/2}) \cr \end{array} \Bigr)
\Bigl( \begin{array}{c} 
\psi_{\pm,\alpha,c} (y+\sigma) \cr \psi_{\pm,\alpha,c}' (y+\sigma) \cr \end{array} \Bigr) .
\end{split} \end{equation}
Thus
\beq \label{firstestimate}
a_\pm^1 = O(1) , \qquad
a_\pm^2 = O( | c - c_{extr}^l |^{3/2}) .
\eeq
Two cases arise. Case I: at $y + \sigma$, 
$$
 a_-^1 =  \psi_+' \psi_{-,\alpha,c} - \psi_+ \psi_{-,\alpha,c}' 
$$
 remains bounded away from $0$ as $c \to c_{extr}^l$. Then in view of (\ref{decomptilde}),
$\psi_{-,\alpha,c}(y_{extr}^l- \sigma)$ and $\psi_{-,\alpha,c}'(y_{extr}^l- \sigma)$  are exactly of order $O( | c - c_{extr}^l|^{-3/2})$ and thus $\psi_{-,\alpha,c}(y)$
is of the same order  for $y < y_{extr}^l - \sigma$ (since the resolvent is bounded for $y < y_{extr}^l - \sigma$).
In view of (\ref{defiW}), $\psi_{+,\alpha,c}(y)$ and $\psi_{+,\alpha,c}'(y)$ are then of order $O(|c - c_{extr}|^{3/2})$ for $y < y_{extr}^l - \sigma$.

Case II: on the contrary, there exists a sequence of complex numbers $c_n$ such that $c_n \to c_{extr}^l$ and
such that $a_-^1$ goes to $0$ as $c \to c_{extr}^l$. 
Then, as the Wronskian of $\psi_{\pm,\alpha,c}$ equals $1$, 
$(\psi_{-,\alpha,c_n},\psi_{-,\alpha,c_n}')$ and $(\psi_{+,\alpha,c_n},\psi_{+,\alpha,c_n}')$  are not colinear. 
Thus $a_+^1$ does not go to $0$.
In particular, $\psi_{+,\alpha,c_n}(y_{extr}^l - \sigma)$ and $\psi_{+,\alpha,c_n}'(y_{extr}^l - \sigma)$  are exactly of order $O( | c_n - c_{extr}^l|^{-3/2})$, and thus
$\psi_{+,\alpha,c_n}(y)$ is of the same order for $y < y_{extr}^l - \sigma$.
In view of (\ref{defiW}), this implies that $\psi_{-,\alpha,c_n}(y_{extr}^l - \sigma)$, together with $\psi_{-,\alpha,c_n}'(y_{extr}^l - \sigma)$, are of order
$O(| c - c_{extr}^l |^{3/2})$. Thus  $\psi_{-,\alpha,c}(0)$
goes to $0$ as $c_n \to c_{extr}^l$, which is contradictory with the
assumption (A3).

Therefore $\psi_{-,\alpha,c}(y)$ is of order $| c - c_{extr}^l |^{-3/2}$ if $y < y_{extr}^l - \sigma$ and $\psi_{+,\alpha,c}(y)$ is bounded 
by $|c - c_{extr}|^{3/2}$.

Summing up the estimates, we have
$$
(\psi_{-,\alpha,c}(y-\sigma), \psi_{-,\alpha,c}'(y - \sigma)) = O(|c - c_{extr}^l |^{-3/2}),
$$
$$
(\psi_{+,\alpha,c}(y-\sigma), \psi_{+,\alpha,c}'(y - \sigma)) = O(|c - c_{extr}^l |^{+3/2}),
$$
together with
\beq \label{decompo}
a_-^1 = O(1), \qquad
a_-^2 = O(| c - c_{extr} |^{3/2})
\eeq
and 
\beq \label{decompo2}
a_+^1 = O( |c - c_{extr} |^3), \qquad
a_+^2 = O( | c - c_{extr} |^{3/2} ).
\eeq
This leads to (\ref{bbbb1}), (\ref{bbbb2}) and (\ref{bbbb3}).
Note that (\ref{decompo}) and (\ref{decompo2}), together with the estimates of Proposition \ref{propositiondeg}
give a very accurate description of $\psi_{\pm,\alpha,c}$ near the extremal velocity. More precisely, near an extremal point $y_{crit}^l$,
for $c$ close to $c_{crit}^l$, and for $y > y_+(c)$,
\beq \label{1}
| \psi_-(y) | \lesssim {1 \over | y - y_{extr}^l |}, \qquad
| \psi_+(y) | \lesssim | y - y_{extr}^l |^2,
\eeq
for $y < y_-(c)$,
\beq \label{2}
| \psi_-(y) | \lesssim { | y - y_{extr}^l |^2 \over  | c - c_{extr}^l |^{3/2} }, \qquad
| \psi_+(y) | \lesssim  { | c - c_{extr}^l |^{3/2} \over| y - y_{extr}^l |},
\eeq
and for $|y - y_{extr}^l|$ of order $O(|c|^{1/2})$,
\beq \label{3}
| \psi_-(y) | \lesssim   | c - c_{extr}^l |^{-1/2} , \qquad
| \psi_+(y) | \lesssim   | c - c_{extr}^l |.
\eeq
Note that the estimates on $\psi_+$ and $\psi_-$ are not symmetric since we enforce that both of them are of order $O(1)$ for large $y$.
This finishes the proof of the proposition.


\section{Asymptotic behavior of $\psi_\alpha(t,y)$ }


The aim of this section is to obtain pointwise bounds on $\psi_\alpha$ and $\partial_y \psi_\alpha$, 
namely to prove (\ref{correctebound1}) and  (\ref{correctedbound2}).
We first describe the Green function of Rayleigh equation and decompose its solution $\psi_{\alpha,c}(t,x)$
in an "interior part" $\psi_{\alpha,c}^{int}(t,x)$ and a "boundary part" $\psi_{\alpha,c}^b(t,x)$ that we then study in details.
The boundary part $\psi_{\alpha,c}^b(t,x)$ will itself be decomposed in the  projection   on eigenmodes  $\psi_{\alpha,c}^{b,modes}(t,y)$
and   a decaying remainder $\psi_{\alpha,c}^{b,decay}(t,y)$.


\subsection{Green function of Rayleigh equation \label{Gb}}


We define the Green function $G_{\alpha,c}(x,y)$ of Rayleigh equation to be the solution of
\beq \label{defiG0c}
Ray_{\alpha,c}(G_{\alpha,c}(x,y)) = \delta_x
\eeq
which satisfies $G_{\alpha,c}(x,0) = 0$ and $G_{\alpha,c}(x,y) \to 0$ as $y \to + \infty$. 
The solution of Rayleigh equation
\beq \label{Ray0cf}
 (U_s - c)  (\partial_y^2 - \alpha^2 ) \psi_{\alpha,c} - U_s''  \psi_{\alpha,c}  = f,
 \eeq
 together with its two boundary conditions is then explicitly given by
 \beq \label{constructionpsi}
 \psi_{\alpha,c}(y) = \int_0^{+\infty} G_{\alpha,c}(x,y) f(x) dx ,
 \eeq
 and the Green function of Euler equations is the inverse Laplace transform of the Green function of Rayleigh equation, namely
\beq \label{G11}
G_\alpha(t,x,y) =  - { \alpha \over 2 \pi} \int_\Gamma e^{- i \alpha c t } G_{\alpha,c}(x,y) \, dc ,
\eeq
where $\Gamma$ is a contour "above the eigenvalues".

 To construct $G_{\alpha,c}$, we first solve (\ref{defiG0c}) without the boundary condition at $y = 0$, just choosing a particular solution
 which goes to $0$ as $y \to +\infty$. This leads to the introduction of
 $$
 G_{\alpha,c}^{int}(x,y) = {1 \over U_s(x) - c} 
 \Bigl\{ \begin{array}{c}
 \psi_{-,\alpha,c}(y) \psi_{+,\alpha,c}(x) \qquad \hbox{if} \qquad y > x \cr
 \psi_{-,\alpha,c}(x) \psi_{+,\alpha,c}(y) \qquad \hbox{if} \qquad y < x , \cr
\end{array} 
 $$
where we recall that  the Wronskian $W[\psi_{+,\alpha,c},\psi_{-,\alpha,c}] = 1$.
 We then correct $G_{\alpha,c}^{int}$ by $G_{\alpha,c}^b$, such that 
 $G_{\alpha,c} = G_{\alpha,c}^{int} + G_{\alpha,c}^b$
 satisfies both Rayleigh equation and the two boundary conditions. This leads to 
\beq \label{expressGb}
 G_{\alpha,c}^b(x,y) = -  { \psi_{+,\alpha,c}(0)  \over \psi_{-,\alpha,c}(0) } 
 {\psi_{-,\alpha,c}(x) \over U_s(x) - c}  \psi_{-,\alpha,c}(y)  .
\eeq
We note that $G_{\alpha,c}^{int}(x,y)$ is smooth for $c$ outside the range of $U_s(y)$
and that the dispersion relation $\psi_{-,\alpha,c}(0) = 0$ only appears in $G_{\alpha,c}^b(x,y)$. In particular,
$G_{\alpha,c}^b(x,y)$ has poles at the various eigenvalues $c \in \sigma_P$.


\subsection{Study of $G_\alpha^{int}$}


Let $x > 0$ and $y > 0$ be fixed.
By construction, $x \in I(x_p)$ for some $1 \le p \le P$ and $y \in I(y_{p'})$ for some $1 \le p' \le P$.
We recall that, for $x < y$,
\beq \label{definitionGalpha}
G_{\alpha}^{int}(t,x,y) = \int_\Gamma {  \psi_{+,\alpha,c}(x)   \psi_{-,\alpha,c}(y) \over  U_s(x) - c} e^{- i \alpha c t} \, dc ,
\eeq
where $\Gamma$ is some contour "above" the spectrum.
The expression for $y < x$ is symmetric.
We define $\psi_\alpha^{int}(t,y)$ to be
\beq \label{defipsialphaint}
\psi_\alpha^{int}(t,y) = \int_{\rit^+} G_\alpha^{int}(t,x,y) \omega_\alpha^0 (x) \, dx .
\eeq
As the integrand of $G_{\alpha}^{int}$ has no pole if $\Im c > 0$, we can move $\Gamma$ to
$$
\Gamma_\eps = \Gamma_1(\eps) \cup \Gamma_2(\eps) \cup \Gamma_3(\eps)
$$
$$
= \{ - A + i \eps + (1+ i) \rit^- \} \cup [-A + i \eps, + A + i \eps ] \cup \{ A + i \eps + (1 - i) \rit^+ \} ,
$$
where $\eps$ is a small positive number and $A$ a large real number so that the range of $U_s(y)$ is included in
$[-A/2,+A/2]$.
We focus on the integral over $\Gamma_2(\eps)$, the other ones being similar.

Let $\chi_r$ be a partition of unity adapted to the $c_r^0$, namely a set of smooth functions such that
$$
\sum_r \chi_r (y) = 1
$$
for all $y$ with $|\Re y| \le A$ and $-\eps \le \Im y \le \eps$ provided $\eps$ is small enough, 
and such that the support of $\chi_r$ is included in $J(c_r^0)$. We can choose the partition of the unity, such that,
on the support of $\chi_r$, there is at most one extremal point, which in this case is $c_r^0$.

We define, omitting $\eps$ in the notation,
\beq \label{definitionGalphat}
G_{\alpha}^{int,r}(t,x,y) = \int_{\Gamma_\eps}
\chi_r(c)  {  \psi_{+,\alpha,c}(x)   \psi_{-,\alpha,c}(y) \over U_s(x) - c} e^{- i \alpha c t} \, dc 
\eeq
and
\beq \label{defipsiintr}
\psi_\alpha^{int,r}(t,y) = \int_{x \ge 0} G_{\alpha}^{int,r}(t,x,y) \omega_\alpha(x) \, dx.
\eeq
We have
$$
\psi_\alpha^{int} (t,y) = \sum_r \psi_{\alpha}^{int,r}(t,y) .
$$
The study of $G_\alpha^{int,r}$ and $\psi_\alpha^{int,r}$ depends on whether $c_r^0$ is an extremal velocity or not.


\subsubsection{Away from extremal values \label{awayfrom}}


\begin{lemma} \label{decay21}
If $c_r^0$ is not an extremal velocity then
\beq \label{decay2}
| \psi_\alpha^{int,r}(t,y) | \lesssim \langle \alpha t \rangle^{-2} 
\eeq
and
\beq \label{decay2der}
| \partial_y \psi_\alpha^{int,r}(t,y) | \lesssim \langle \alpha t \rangle^{-1} 
\eeq
\end{lemma}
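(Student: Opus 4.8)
The plan is to exploit the fact that, away from extremal velocities, the solutions $\psi_{\pm,\alpha,c}$ are $C^1$ in $c$ up to the real axis (Propositions \ref{propfirst2} and \ref{localization}), with the only non-smoothness being the logarithmic singularity $(U_s(y)-c)\log(U_s(y)-c)$ coming from the critical layer. Accordingly I would first substitute the decomposition \eqref{psifirstcase} into the formula \eqref{definitionGalphat} for $G_\alpha^{int,r}$ and split the $c$-integrand into a regular part (which is $C^1$, and compactly supported in $c$ by the cutoff $\chi_r$) and a part carrying the critical-layer singularity $(U_s(x)-c)^{-1}$ together with the $(U_s(y)-c)\log(U_s(y)-c)$ factors. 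For the genuinely regular part, I would push the contour $\Gamma_\eps$ down to the real axis, integrate by parts twice in $c$ — each integration by parts against $e^{-i\alpha c t}$ gaining a factor $(\alpha t)^{-1}$, the boundary terms vanishing because $\chi_r$ is compactly supported — and obtain the $\langle\alpha t\rangle^{-2}$ bound. Since the whole analysis only ever differentiates twice in $c$, the $C^1$ (in fact we need control of $\partial_c$ and $\partial_c^2$) regularity asserted in Proposition \ref{propfirst2} is exactly what is needed here.

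The delicate term is the one involving the Cauchy kernel $1/(U_s(x)-c)$ together with the logarithm $\log(U_s(y)-c)$. Here I would not move the contour all the way to the real axis but keep it at height $\eps$, or equivalently interpret the limit $\eps\to 0^+$ as a boundary value. The $x$-integration in \eqref{defipsiintr} against the exponentially decaying $C^2$ data $\omega_\alpha^0(x)$ regularizes the Cauchy kernel: integrating by parts in $x$ (using $\partial_x \log(U_s(x)-c) = U_s'(x)/(U_s(x)-c)$ and the fact that $U_s'$ is bounded away from $0$ near a critical, non-extremal layer) converts the simple pole into a logarithmic kernel, which is locally integrable in $c$ uniformly in $x$. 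One can then again integrate by parts twice in $c$: the first integration by parts is licit because $\log$ is integrable, and it produces a kernel like $(U_s(y)-c)^{-1}$ or $\log$-type terms which, after the $x$-integration against the smooth compactly supported data, still yield two clean gains of $(\alpha t)^{-1}$. The key mechanism is the classical one: a function of the form $g(c)+h(c)\log(c-c_*)$ with $g,h\in C^2$ has Fourier transform in $c$ decaying like $\langle\alpha t\rangle^{-2}$, the logarithm contributing $\langle\alpha t\rangle^{-2}$ exactly (no log correction), because the singularity sits away from the extremal layer where everything is under control by Assumption (A3).

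For the derivative estimate \eqref{decay2der}, I would simply differentiate \eqref{definitionGalphat} in $y$; by \eqref{bbbb1} and the $C^1$ dependence, $\partial_y \psi_{\pm,\alpha,c}$ satisfies the same type of bounds as $\psi_{\pm,\alpha,c}$, except that the $y$-derivative falling on $(U_s(y)-c)\log(U_s(y)-c)$ produces a bounded $\log(U_s(y)-c)$ term — one degree worse than before. Repeating the contour-shift/integration-by-parts argument with one fewer integration by parts in $c$ (since we can now integrate by parts only once before hitting a non-integrable obstruction, or more precisely we lose one order from the extra $\log$) yields the $\langle\alpha t\rangle^{-1}$ bound. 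The contributions from $\Gamma_1(\eps)$ and $\Gamma_3(\eps)$ are harmless: on these slanted rays $\Im c$ grows linearly, so $|e^{-i\alpha c t}|$ decays exponentially, and the integrands are uniformly bounded there. The main obstacle I anticipate is bookkeeping the interplay between the $x$-integration (which softens the Cauchy kernel) and the $c$-integration (which produces the time decay) so as to be sure that exactly two orders of decay survive; once one is careful that the critical layer stays away from the extremal layer, no logarithmic loss occurs in \eqref{decay2}, and this is precisely where Assumption (A3) and the choice of the partition of unity $\chi_r$ (at most one extremal point per tile, and $c_r^0$ not extremal here) enter.
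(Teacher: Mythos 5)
Your overall skeleton (decompose $\psi_{\pm,\alpha,c}$ via (\ref{psifirstcase}), cut off in $c$ with $\chi_r$, take boundary values as $\eps\to0^+$, and combine $c$-integration with $x$-integration against the $C^2$ data) is the right one, and for the sub-case where neither $U_s(x)$ nor $U_s(y)$ lies in the support of $\chi_r$ your "two integrations by parts in $c$" is exactly what the paper does. But the step you call the key mechanism is wrong: a function $g(c)+h(c)\log(c-c_*)$ with $g,h\in C^2$ does \emph{not} have a $c$-Fourier transform decaying like $\langle\alpha t\rangle^{-2}$; by the Sokhotski--Plemelj computations of the appendix ((\ref{ple1})--(\ref{ple3})), the boundary value of the pole $1/(U_s(x)-c)$ contributes an $O(1)$ purely oscillatory term $e^{-i\alpha U_s(x)t}$, the $\log(U_s(x)-c)$ factor contributes a term of size $(\alpha t)^{-1}$, and only singularities of the type $(c-c_*)\log(c-c_*)$ yield $\langle\alpha t\rangle^{-2}$. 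Consequently the kernel $G_\alpha^{int,r}(t,x,y)$ itself does not decay in time at all (this is what (\ref{contrib1})--(\ref{contrib3}) make explicit): the full $\langle\alpha t\rangle^{-2}$ decay of $\psi_\alpha^{int,r}$ in (\ref{defipsiintr}) can only come from the subsequent $x$-integration, by integrating by parts on the phase $e^{-i\alpha U_s(x)t}$ against the smooth, exponentially decaying $\omega_\alpha^0$, which is licit precisely because $U_s'\neq0$ wherever $\chi_r(U_s(x))\neq0$ (no extremal value in the tile). Relatedly, your plan to "integrate by parts twice in $c$" on the singular piece cannot be carried out as stated: the first integration by parts turns the log back into the pole $1/(U_s(x)-c)$, and a second one produces the non-integrable $1/(U_s(x)-c)^2$; the second power of $t^{-1}$ must instead be extracted from the $x$-oscillation, as above. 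You gesture at this ("after the $x$-integration\dots"), but the argument as written rests on the false $t^{-2}$ claim, which is a genuine gap.

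Two further points you would need to address to complete the argument. First, the "regular part" of your splitting is essentially empty when the critical layer sits inside the support of $\chi_r$: every term of the kernel carries the factor $1/(U_s(x)-c)$, so the four products $K_1$--$K_4$ arising from (\ref{psifirstcase}) must each be treated (the paper does this term by term, and the $C^1$ statement of Proposition \ref{propfirst2} must be upgraded, as its proof allows, to control two $c$-derivatives where you need them). Second, the cross term pairing the pole at $c=U_s(x)$ with the branch point at $c=U_s(y)$ produces, after the Plemelj limit, a contribution singular on the diagonal $x=y$ (the term $Q_s(t,x,y)$, with a factor $1/(U_s(x)-U_s(y))$ and a $\log(U_s(y)-U_s(x))$); your integration by parts in $x$ across $x=y$ also generates jump terms there. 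A separate expansion for $x$ close to $y$ (as in the paper, writing $\psi^{r,s}_{n,\alpha}(\cdot,c)$ as its value at $c=U_s(x)$ plus a smooth multiple of $c-U_s(x)$) is needed before one can conclude; your proposal does not address this region.
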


\Remark This result is classical \cite{Schmidt}. We only sketch its proof.

\begin{proof}
Let us assume that $x < y$ to fix the ideas.
Away from the extremal velocities,  near $x$ (respectively near $y$), section \ref{regularpoint} 
and Proposition \ref{propfirst2} give two independent functions $\psi^x_{1,\alpha,c}$ and $\psi^x_{2,\alpha,c}$ (respectively
$\psi^y_{1,\alpha,c}$ and $\psi^y_{2,\alpha,c}$)
with unit Wronskian, defined near $x$ (resp. $y$).  The first step is to decompose $\psi_{+,\alpha,c}(x)$ on these two particular solutions
and to write
\beq \label{decoco}
\psi_{+,\alpha,c}(x) = a_+^1 \psi^x_{1,\alpha,c}(x) + a_+^2 \psi^x_{2,\alpha,c}(x),
\eeq
and we can decompose  $\psi_{-,\alpha,c}$ similarly,
where $a_\pm^{1,2}$ are some constants.
As all the resolvents used in the construction of $\psi_{\pm,\alpha}$ are bounded, $a_\pm^{1,2}$ are bounded.
 Up to a multiplication by $a_-^m a_+^n$ with $m$, $n = 1$ or $2$ (which are bounded),
we are then  led to study
\beq \label{Gmn}
G_{m,n}^r(t,x,y) =  \int_{\Gamma_\eps} \chi_r(c) {  \psi_{m,\alpha,c}(x)   \psi_{n,\alpha,c}(y) \over  U_s(x) - c} e^{- i \alpha c t} \, dc 
\eeq
where $m$ and $n$ equal $1$ or $2$. To alleviate the notation, we have dropped the superscripts $x$ and $y$.

If $c \ne U_s(x)$ and $c \ne U_s(y)$ on the support of $\chi_r(c)$, the integrand of $G_{m,n}^r$ is smooth.
Integrating by parts, we get $| G_{m,n}^r(t,x,y) | \lesssim \langle \alpha t \rangle^{-2}$, which leads to the desired decay
$| \psi_\alpha^{int,r}(t,y) | \lesssim  \langle \alpha t \rangle^{-2}$.

If $c = U_s(x)$ or $c = U_s(y)$ on the support of $\chi_r(c)$, then the integrand of (\ref{Gmn})  is singular.
 We will discuss the case where both $U_s(x)$ and $U_s(y)$ are in the support of $\chi_r$, the
other cases being similar.
In this case, the integrand of $G_{m,n}^r$ is no longer smooth, but
 has "$z \log z$" branches at $c = U_s(x)$ and $c = U_s(y)$, and a pole at $c = U_s(x)$. 
However, it turns out that, after a few computations that we will now detail, $G_{m,n}^r$ is almost explicit. 
The following computations are very classical \cite{Schmidt}.

We recall that $\psi_{m,\alpha,c}(x)$ and $\psi_{m,\alpha,c}(y)$ 
are of the form (\ref{psifirstcase}), thus $G_{m,n}^r(t,x,y)$ is the sum of four terms $K_1$-$ K_4$.
To ease reading, we put $c$ into the variables instead of into the subscript.
The first term is
$$
K_1 =\lim_{\eps \to 0^+} \int_{\Gamma_\eps} \chi_r(c) e^{- i \alpha c t }  {\psi_{m,\alpha}^r(x,c) \psi_{n,\alpha}^r(y,c) \over U_s(x) - c} \; dc 
$$
\beq \label{contrib1}
= -  2 i \pi \psi_{m,\alpha}^r(x,U_s(x)) \psi_{n,\alpha}^r(y,U_s(x)) e^{- i \alpha U_s(x) t}  \chi_r(U_s(x)) + O(\langle \alpha t \rangle^{-2}).
\eeq
(see the Appendix for more details on this computation and the  following ones).
The second term is
$$
K_2 = \lim_{\eps \to 0^+} \int_{\Gamma_\eps} \chi_r(c) e^{- i \alpha c t }  {(U_s(x) - c) \log (U_s(x) -c) 
\psi_{m,\alpha}^s(x,c) \psi_{n,\alpha}^r(y,c) \over U_s(x) - c} \; dc 
$$
\beq \label{contrib2}
= -2  i \pi \psi_{m,\alpha}^s(x,U_s(x)) \psi_{n,\alpha}^r(y,U_s(x))  {e^{- i \alpha U_s(x) t} \over \alpha t}  \chi_r(U_s(x))
+   O(\langle \alpha t \rangle^{-2}) .
\eeq
The third term of $G_{m,n}^r(t,x,y)$  is 
$$
K_3 =  \lim_{\eps \to 0^+} \int_{\Gamma_\eps} \chi_r(c) e^{- i \alpha c t }  { \psi_{m,\alpha}^r(x,c) (U_s(y) - c) \log (U_s(y) -c) 
 \psi_{n,\alpha}^s(y,c) \over U_s(x) - c} \; dc 
$$
$$
= -  2 i \pi \psi_{m,\alpha}^r(x,U_s(x)) \psi_{n,\alpha}^s(y,U_s(x)) [U_s(y) - U_s(x)]
$$
\beq \label{contrib3}
\times \log (U_s(y) - U_s(x)) e^{- i \alpha U_s(x) t}  \chi_r(U_s(x))
+ Q_s(t,x,y)
+   O(\langle \alpha t \rangle^{-3}),
\eeq
where
$$
Q_s(t,x,y) =  \pi {\psi^r_{m,\alpha}(x,U_s(y))
 \psi^s_{n,\alpha,}(y,U_s(y))  \over U_s(x) - U_s(y)}  {e^{- \alpha i U_s(y) t} \over \alpha^2 t^2} \chi_r(U_s(y)).
$$
We note that $Q_s(t,x,y)$ is singular when $x = y$.
For $x$ close to $y$, we must handle the third term of $G_{m,n}^r(t,x,y)$ differently. We expand $\psi_{n,\alpha}^s(y,c)$
and $\psi_{n,\alpha}^r(x,c)$ as
$$
\psi_{n,\alpha}^s(y,c) = \psi_{n,\alpha}^s(y,U_s(x)) + (c - U_s(x)) \widetilde \psi_{n,\alpha}^s(y,c)
$$
and
$$
\psi_{n,\alpha}^r(x,c) = \psi_{n,\alpha}^r(x,U_s(x)) + (c - U_s(x)) \widetilde \psi_{n,\alpha}^r(x,c)
$$
where $\widetilde \psi_{n,\alpha}^s$ and  $\widetilde \psi_{n,\alpha}^r$ are smooth functions. 
Using these decompositions, $K_3$ is a sum of four terms. The most difficult term is 
$$
\psi_{n,\alpha}^s \Bigl( y,U_s(x) \Bigr) \psi_{n,\alpha}^r \Bigl( x,U_s(x) \Bigr) L(t,x,y)
$$
where
 $$
 L(t,x,y) = \lim_{\eps \to 0^+} \int_{\Gamma_\eps} \chi_r(c) e^{- i \alpha c t }  { (U_s(y) - c) \log (U_s(y) -c) 
\over U_s(x) - c} \; dc.
$$
Using $U_s(y) - c = ( U_s(y) - U_s(x)) + (U_s(x) - c)$, we get
$$
L(t,x,y)  = L_1(t,x,y) + L_2(t,x,y),
$$
where
$$
L_1(t,x,y) = - 2 i \pi (U_s(y) - U_s(x)) \log(U_s(y) - U_s(x))  e^{- i \alpha U_s(x) t}
$$
and $L_2$ is smooth, oscillatory and decays like $\langle \alpha t \rangle^{-1}$.

The last term of $G_{m,n}^r(t,x,y)$ is
$$
K_4 = \lim_{\eps \to 0^+} \int_{\Gamma_\eps} \chi_r(c) e^{- i \alpha c t }   \log (U_s(x) - c) \psi_{m,\alpha}^s(x,c) (U_s(y) - c) \log (U_s(y) -c) 
 \psi_{n,\alpha}^s(y,c)  \; dc 
$$
\beq \label{contrib4}
= - { \pi \over \alpha t} \psi^s_{m,\alpha}(x,U_s(x))\psi^s_{n,\alpha}(y, U_s(x)) (U_s(y) - U_s(x))   \chi_r(U_s(x))
\eeq
$$
\times  \log( U_s(y) - U_s(x)) e^{-i \alpha U_s(x) t}  + O(\langle \alpha t \rangle^{-2}).
$$
The limiting Green function $\lim_{\eps \to 0^+} G_{m,n}^r(t,x,y)$ is the sum of (\ref{contrib1}), (\ref{contrib2}), (\ref{contrib3}) and (\ref{contrib4}).
Note that it is oscillatory in time and singular on the diagonal $x = y$.

Let us now turn to bounds on $\psi_\alpha^{int,r}$, defined by (\ref{defipsiintr}).
Contributions of (\ref{contrib1}), (\ref{contrib2}),  (\ref{contrib4}) and of the first term of  (\ref{contrib3}) 
decay like $\langle \alpha t \rangle^{-2}$ after integrations by
parts. 

The second term of (\ref{contrib3}), namely $Q_s(t,x,y)$,
leads to a term that decays like $\langle \alpha t \rangle^{-2}$ provided $x$ is away from $y$.
For $x$ close to $y$,  we use the description of $L$. 
As previously, $L_1$ and $L_2$ lead to a term in $O(\langle \alpha t \rangle^{-2})$, which ends the proof.
The bound on $\partial_y \psi_\alpha^{int,r}$ is similar.
\end{proof}


\subsubsection{Near an extremal value \label{nearextremal}}


\begin{lemma} \label{decay21}
If $c_r^0$ is  an extremal velocity $c_{extr}^l$, then
\beq \label{decay2}
| \psi_\alpha^{int,r}(t,y) | \lesssim \langle \alpha t \rangle^{-2} \theta( y - y_{extr}^l)
\eeq
and
\beq \label{decay2der}
| \partial_y \psi_\alpha^{int,r}(t,y) | \lesssim \langle \alpha t \rangle^{-1} \theta( y - y_{extr}^l).
\eeq
\end{lemma}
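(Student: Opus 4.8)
The plan is to mirror the argument of the away‑from‑extremal case above, isolating the two new effects, which occur as $c\to c_{extr}^l$ and as $y\to y_{extr}^l$. First I would write $\chi_r=\chi_r^{far}+\chi_r^{near}$, where $\chi_r^{near}$ is a smooth cut‑off equal to $1$ on $\{|c-c_{extr}^l|\le\rho_0\}$ and supported in $\{|c-c_{extr}^l|\le 2\rho_0\}$ for a fixed small $\rho_0>0$, and split $\psi_\alpha^{int,r}=\psi_\alpha^{far}+\psi_\alpha^{near}$ accordingly. On the support of $\chi_r^{far}$ the parameter $c$ stays at distance $\ge\rho_0$ from every extremal velocity, so that support can be covered by tiles of the regular and near‑critical types only (Section \ref{regularpoint} and Proposition \ref{propfirst2}), on which all solutions $\psi_{\pm,\alpha,c}$ and the resolvents used to build them are bounded — with constants depending on $\rho_0$ — and $C^1$ in $(x,y)$ on a neighbourhood of $(y_{extr}^l,y_{extr}^l)$ depending only on $\rho_0$. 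The computation of the preceding lemma then applies with no change to $\psi_\alpha^{far}$ and gives $|\psi_\alpha^{far}(t,y)|\lesssim\langle\alpha t\rangle^{-2}$ and $|\partial_y\psi_\alpha^{far}(t,y)|\lesssim\langle\alpha t\rangle^{-1}$ uniformly in $y$, which is absorbed in the stated bounds since the weight $\Theta$ of (\ref{Theta}) is $\ge 1$.

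The remaining term $\psi_\alpha^{near}(t,y)=-\tfrac{\alpha}{2\pi}\int_{\Gamma_\eps}\chi_r^{near}(c)\,e^{-i\alpha ct}\,\psi_{\alpha,c}^{int,r}(y)\,dc$, with $\psi^{int,r}_{\alpha,c}(y)=\int_0^{+\infty}G^{int}_{\alpha,c}(x,y)\,\omega_\alpha^0(x)\,dx$, is the crux. On $[y_{extr}^l-\sigma,y_{extr}^l+\sigma]$ I would use the local basis $\psi_\pm(\cdot,c)$ of Proposition \ref{propositiondeg}, whose Wronskian is of order $|c-c_{extr}^l|^{-3/2}$, and decompose $\psi_{\pm,\alpha,c}$ on it by (\ref{decomptilde}) with the bounded coefficients (\ref{decompo})–(\ref{decompo2}); for the variable among $x,y$ lying outside this interval I would use (\ref{bbbb2})–(\ref{bbbb3}) and the boundedness of the connecting resolvent. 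Two compensations keep the kernel $G^{int}_{\alpha,c}$ under control near $c_{extr}^l$: the factor $1/(U_s(x)-c)$ cancels against the $(U_s(x)-c)$‑prefactor of the second local solution $\psi_2^0$ (cf.\ (\ref{psi2explicit})), turning the apparent pole into an integrable logarithmic branch, and the localization principle makes $\psi_{+,\alpha,c}$ of order $|c-c_{extr}^l|^{3/2}$ on $\{x<y_{extr}^l\}$, offsetting the $|c-c_{extr}^l|^{-3/2}$ size of $\psi_{-,\alpha,c}$ there. Using in addition the explicit structure of $\psi_\pm$ obtained in the $\alpha=0$ step of the proof of Proposition \ref{propositiondeg} — modulo $U_s-c$, the second solution is $(U_s(y)-c)$ times a finite sum of negative powers of $c^{1/2}v$ and a term $\log(c^{1/2}v)$, with $c^{1/2}v=Ac+\beta^{-1/2}\sqrt{U_s(y)}+O(c^2)$, so its only branch points in $c$ are the critical layers $y_\pm(c)$ and the point $c=c_{extr}^l$ — the $c$‑integral in $\psi_\alpha^{near}$ reduces, for fixed $(x,y)$, to the same explicit evaluation as in the away case: the residue at the pole $c=U_s(x)$ yields a term carrying $e^{-i\alpha U_s(x)t}$, the branch points at $c=U_s(x)$, $U_s(y)$ and $c_{extr}^l$ yield, after one or two integrations by parts in $c$, extra factors $(\alpha t)^{-1}$–$(\alpha t)^{-2}$, and the smooth remainder gains $\langle\alpha t\rangle^{-2}$ after two further integrations by parts. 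Integrating the resulting piecewise‑explicit kernel against $\omega_\alpha^0(x)\,dx$, the oscillating factors $e^{-i\alpha U_s(x)t}$ gain one more power $\langle\alpha t\rangle^{-1}$ via the substitution $v=U_s(x)$ and an integration by parts on every interval where $U_s'\ne 0$; on the degenerate interval $|x-y_{extr}^l|\lesssim(\alpha t)^{-1/2}$ the integral $\int e^{-i\alpha U_s(x)t}(\cdots)\,dx$ is only $O((\alpha t)^{-1/2})$ per oscillating factor, but there $\psi_{+,\alpha,c}(x)$ is $O(|c-c_{extr}^l|^{3/2})$ by (\ref{3}) (or the would‑be pole $1/(U_s(x)-c)$ is again killed by the $(U_s(x)-c)$‑prefactor), which more than compensates; one thus obtains $\langle\alpha t\rangle^{-2}$.

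The weight $\Theta(y-y_{extr}^l)$ is picked up only in the $c$‑integration over $\{|c-c_{extr}^l|\le 2\rho_0\}$, and only through the $y$‑dependence of the profiles: for $y$ bounded away from $y_{extr}^l$ the profiles $\psi_\pm(y,c)$ are uniformly $C^2$ in $c$ there and the above integrations by parts converge absolutely, giving $\langle\alpha t\rangle^{-2}$ with no logarithm, consistently with $\Theta=O(1)$; whereas for $y\to y_{extr}^l$ — the delicate direction being $y<y_{extr}^l$, where $\psi_{-,\alpha,c}(y)$ is of size $\min(|y-y_{extr}^l|^2|c-c_{extr}^l|^{-3/2},\,|c-c_{extr}^l|^{-1/2})$ by (\ref{2})–(\ref{3}) — the $c$‑integral is only borderline convergent and, after integration by parts, reduces to a logarithmically divergent integral cut at the $t$‑independent scale $|c-c_{extr}^l|\sim|y-y_{extr}^l|^2$ where the two regimes of (\ref{2}) and (\ref{3}) meet; this produces the factor $|\log|y-y_{extr}^l||$, while the cut‑off being $t$‑independent the full $\langle\alpha t\rangle^{-2}$ decay is retained. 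The bound on $\partial_y\psi_\alpha^{int,r}$ follows the same way, $\partial_y$ costing at most a factor $|y-y_{extr}^l|^{-1}$ on the profiles and one fewer integration by parts on the $x$‑side, which downgrades $\langle\alpha t\rangle^{-2}$ to $\langle\alpha t\rangle^{-1}$ and leaves the $\Theta$‑weight untouched. I expect the genuinely hard part to be the joint bookkeeping of the two singular parameters — verifying in every product appearing in $G^{int}_{\alpha,c}$ that the negative powers of $|c-c_{extr}^l|$ coming from the Wronskian and from the $c$‑differentiations are absorbed by the small coefficients $a_\pm^j$, by the $(U_s(x)-c)$‑prefactor, and by the localization smallness of $\psi_{+,\alpha,c}$, so that the at most two integrations by parts in $c$ are legitimate and the only unavoidable loss is the single logarithm in $|y-y_{extr}^l|$ — together with the treatment of the degenerate stationary point $x=y_{extr}^l$, handled by the split $|x-y_{extr}^l|\lessgtr(\alpha t)^{-1/2}$ and the same smallness.
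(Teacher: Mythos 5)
Your overall strategy (split off the $c$-region near $c_{extr}^l$, use the local basis of Proposition \ref{propositiondeg} with the localization compensation $a_\pm$, $|c-c_{extr}^l|^{\pm 3/2}$, then do the Plemelj/residue computation of the away case plus a degenerate stationary-phase argument in $x$) is in the right spirit, but it bypasses the device on which the paper's proof actually rests, and the substitute you offer for it is not correct. You claim that near the extremal point the factor $1/(U_s(x)-c)$ is cancelled by a $(U_s(x)-c)$-prefactor of the second local solution, "turning the apparent pole into an integrable logarithmic branch". This is false: only the logarithmic part of the second solution carries that prefactor. In the explicit model (\ref{psi2explicit}) one has $\psi_2^0(\sqrt c,c)=-1/2\neq 0$, and in the general construction $\Psi_-=(U_s-c)[I_2+I_3]$ the bracket blows up like $1/(v-z_{1,2})$ at the critical layers (see the $J_0$, $J_1$ terms), so the product stays bounded away from zero there; the pole at $c=U_s(x)$ survives (as it must, since it generates the non-decaying oscillation $e^{-i\alpha U_s(x)t}$), and indeed you later extract a residue from the very pole you claimed had disappeared. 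What the paper does instead, precisely because this pole cannot be cancelled by the basis, is to use the Rayleigh equation itself: near $y_{extr}^l$ one has $U_s''\neq 0$, so $\psi_{\pm,\alpha,c}(x)/(U_s(x)-c)=(\partial_x^2-\alpha^2)\psi_{\pm,\alpha,c}(x)/U_s''(x)$, i.e.\ (\ref{firstexpression3}); this removes the singular denominator altogether, replaces $G_\alpha^{int,r}$ by $(\partial_x^2-\alpha^2)\widetilde G_\alpha^{int,r}$, and the $x$-derivatives are then moved onto $\omega_\alpha^0/U_s''$ by integration by parts, producing in addition a jump term controlled by the unit Wronskian.

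The second, and decisive, gap is that the part you yourself identify as "the genuinely hard part" — checking that two integrations by parts in $c$ are legitimate even though each $c$-derivative of the profiles costs a factor $|c-c_{extr}^l|^{-1}$ (or $|y-y_{extr}^l|$), and that the total loss is only one logarithm — is exactly the content of the paper's Lemma \ref{sins}, and you do not carry it out. The paper proves, after two integrations by parts in $c$, the kernel bound $|\widetilde G_\alpha^{int,r}(t,x,y)|\lesssim \langle\alpha t\rangle^{-2}(|x-y_{extr}^l|+|y-y_{extr}^l|)^{-1}$ by splitting the $c$-integral at $|c-c_{extr}^l|\sim\min(|x-y_{extr}^l|^2,|y-y_{extr}^l|^2)$ and $\sim\max(\cdot)$ and using (\ref{1})--(\ref{3}) in each regime; the logarithm $\Theta(y-y_{extr}^l)$ then comes from the subsequent $x$-integration $\int|\widetilde\omega_\alpha^0(x)|\,(|x-y_{extr}^l|+|y-y_{extr}^l|)^{-1}dx$, not, as you assert, from a borderline-divergent $c$-integral. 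Without (\ref{firstexpression3}) you are left with three singularities colliding as $x,y\to y_{extr}^l$ — the surviving pole at $c=U_s(x)$, the branch points, and the blow-up (\ref{atplussd1})--(\ref{atplussd4}) of the $c$-derivatives at $c_{extr}^l$ — and your proposal only asserts, case by unquantified case ("more than compensates"), that the bookkeeping closes, including at the degenerate stationary point $|x-y_{extr}^l|\lesssim(\alpha t)^{-1/2}$ and for the $\partial_y$ estimate. As it stands the argument therefore has a genuine hole where the main estimate of the lemma should be.
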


\begin{proof}
Let us again study $G_{\alpha}^{int,r}(t,x,y)$.
Let us  assume $x < y$ to fix the ideas, the case $x > y$ being similar.
We now assume that $c_r^0  = c_{extr}^l$ for some $1 \le l \le L$.
We first discuss the position of $x$ and $y$ with respect to $y_{extr}^l$.
We recall that $x \in I(x_p)$ and $y \in I(x_{p'})$ for some integers $p$ and $p'$.

Let us first investigate the case where neither $I(x_p)$ nor $I(x_{p'})$ contains $y_{extr}^l$, namely the case
where $x$ and $y$ are "away" from the extremal point $y_{extr}^l$.
Then, near $x$, Proposition \ref{propfirst2} gives the existence of two smooth solutions $\psi_{1,\alpha,c}^x$ 
and $\psi_{2,\alpha,c}^x$ with unit Wronskian.
We again decompose $\psi_{+,\alpha,c}(x)$ and $\psi_{-,\alpha,c}(y)$ using (\ref{decoco}). 

If $x > y_{extr}^l$ and $y > y_{extr}^l$ then both $\psi_{+,\alpha,c}$ and $\psi_{-,\alpha,c}$ are locally bounded. Thus, by construction,
$a_\pm^1$ and $a_\pm^2$ are bounded, and we are back to the situation of the previous section \ref{awayfrom},
hence (\ref{decay2}) holds true.

If $x < y_{extr}^l$ and $y > y_{extr}^l$, $\psi_{-,\alpha,c}$ is bounded near $y$. Thus $a_-^1$ and $a_-^2$ are
bounded. As $\psi_{+,\alpha,c}$ is of order $O( (c - c_{extr}^l)^{3/2})$,  $a_+^1$ and $a_+^2$  are
of the same order. Therefore, in this case also,
$a_\pm^1 a_\pm^2$ is bounded and we are  back to the previous section  \ref{awayfrom}.

If $x < y_{extr}^l$ and $y < y_{extr}^l$, then $a_-^1$ and $a_-^2$ are bounded by
$O( (c - c_{extr}^l)^{-3/2})$, whereas $a_+^1$ and $a_+^2$ are bounded
by $O( (c - c_{extr}^l)^{+3/2})$.
 Thus, again, $a_\pm^1 a_\pm^2$ is bounded and we are back to the previous section  \ref{awayfrom}.

The last case $x > y_{extr}^l$ and $y < y_{extr}^l$ is impossible since $x < y$ by assumption.

\medskip

We now turn to the opposite case $y_{extr}^l \in I(x_p) = I(x_{p'})$.
The case $y_{extr}^l \notin I(x_p)$, $y_{extr}^l \in I(x_{p'})$ and the case $y_{extr}^l \in I(x_p)$, $y_{extr}^l \notin I(x_{p'})$ are similar.

Close to $y_{extr}^l$, we note that $U_s''(x)$ does not vanish. Hence, using Rayleigh equation, we get 
\beq \label{firstexpression3}
{\psi_\pm(x) \over U_s(x) - c} = {\partial_x^2 \psi_\pm(x) - \alpha^2 \psi_\pm(x) \over U_s''(x) } .
\eeq
This leads to, for $x < y$,
$$
G^{int,r}_\alpha(t,x,y) 
=  \int_{\Gamma_\eps} \chi_r(c)  {\partial_x^2 \psi_{+,\alpha,c}(x) - \alpha^2 \psi_{+,\alpha,c}(x) \over U_s''(x) }
  \psi_{-,\alpha,c}(y) e^{- i \alpha c t} \, dc ,
$$
and symmetrically in $\pm$ for $x > y$.
We  define, for $\eps > 0$,
\beq \label{definitonGalpha4}
\widetilde G_\alpha^{int,r}(t,x,y) =  \int_{\Gamma_\eps} \chi_r(c)  \psi_{+,\alpha,c} ( x)  \psi_{-,\alpha,c} ( y  ) e^{- i \alpha c t} \, dc ,
\eeq
in such a way that, for $x \ne y$,
$$
G_\alpha^{int,r}(t,x,y) = (\partial_x^2 - \alpha^2) \widetilde G_\alpha^{int,r} .
$$
We have the following estimates on $\widetilde G_\alpha^{int,r}$.

\begin{lemma} \label{sins}
 If $|y| < \sigma$ and $|x| < \sigma$ with $\sigma > 0$ small enough, then
\beq \label{boundGG}
| \widetilde G_\alpha^{int,r} (t,x,y) | \lesssim {1 \over \langle \alpha t \rangle^2}  
{1 \over  |x - y_{extr}| +  |y - y_{extr}|} .
\eeq
\end{lemma}

\begin{proof}
We assume $\alpha = 1$, $c_{extr} = 0$, $y_{extr} = 0$ and $U_s''(y_{extr}) = 2$ to simplify the notations.
We detail the case $y > 0$.
Integrating by parts, we have
\beq \label{Grho2b}
\widetilde G_\alpha^{int,r}(t,x,y) =   - {1 \over \alpha^2 t^2}
 \int_{\Gamma_\eps} \partial_c^2 \Bigl[ \chi_r(c)  \psi_{+,\alpha,c} ( x)  \psi_{-,\alpha,c} ( y  ) \Bigr] e^{- i \alpha c t} \, dc .
\eeq
We split the integral defining $\widetilde G_\alpha^{int,r}(t,x,y)$ in three parts $M_1$, $M_2$, $M_3$, where
$M_1$ is the integral for $|c| \le 2^{-1} \min(|x|^2,|y|^2)$, $M_3$ the integral for $|c| \ge 2 \max(|x|^2,|y|^2)$ and $M_2$ the intermediate one.
We recall the inequalities (\ref{1}) and (\ref{2}).

Let us  study $M_1$ and first assume that $x < 0$ and $y > 0$. We have
$$
|  \psi_{+,\alpha,c} ( x)  \psi_{-,\alpha,c}(y) | \lesssim {|c|^{3/2} \over |x| |y|}.
$$
 Each time we differentiate one term, we loose a factor $|c|$, thus
$$
\Bigl | \partial_c^2 \Bigl[ \chi_r(c) \,   \psi_{+,\alpha,c} ( x)  \psi_{-,\alpha,c} (y) \Bigr] \Bigr| \lesssim 
{1 \over  |c|^{1/2} |x| \, |y|},
$$
which leads to (\ref{boundGG}), after integration in $c$.

Let us now investigate the case $0 < x < y$.
In this case, we have
$$
\Bigl|  \chi_r(c)  \psi_{+,\alpha,c}(x)  \psi_{-,\alpha,c}(y)  \Bigr| \lesssim  { | x |^2 \over y} .
$$
As a $c$ derivative leads to a loss of a factor $x$ or $y$, we have
$$
\Bigl|  \partial_c^2 \Big[ \chi_r(c)  \psi_{+,\alpha,c}(x)  \psi_{-,\alpha,c}(y)  \Bigr] \Bigr| \lesssim  {1 \over |y|} + {|x|^2 \over |y|^3}.
$$
The integral in $c$ is thus bounded by $|y|$, and hence by $(|x| + |y|)^{-1}$, provided  $\sigma$ is small enough.

Let us turn to the case  $x > y > 0$. We have
$$
\Bigl|  \chi_r(c)  \psi_{+,\alpha,c}(x)  \psi_{-,\alpha,c}(y)  \Bigr| \lesssim  { | y |^2 \over x},
$$
thus
$$
\Bigl|  \chi_r(c)  \psi_{+,\alpha,c}(x)  \psi_{-,\alpha,c}(y)  \Bigr| \lesssim  { |y|^2 \over |x|^3} + {1 \over |x|}.
$$
The corresponding integral is thus bounded by $|y|$, namely by $(|x| + |y|)^{-1}$, provided  $\sigma$ is small enough.

Let us turn to $M_3$. For $c \ge 2 \max(|x|^2,|y|^2)$, $\psi_-$ is of order $O(|c|^{-1/2})$ and $\psi_+$ is of order $O(| c |)$, thus
$$
\Bigl|   \chi_r(c)  \psi_{+,\alpha,c}(x)  \psi_{-,\alpha,c}(y)  \Bigr| \lesssim  |c|^{1/2}.
$$
Derivatives are a factor $|c|^{-1}$ larger. Thus, in this area,
$$
\Bigl|  \partial_c^2 [ \chi_r(c)  \psi_{+,\alpha,c}(x)  \psi_{-,\alpha,c}(y) ]  \Bigr| \lesssim  |c|^{-3/2},
$$
leading to  $| M_3 | \lesssim (|x| + |y|)^{-1}$.
The bounds on $M_2$  are similar.
 \end{proof}

We now turn to the bounds on $\psi_\alpha^{int,r}(t,y)$. If both $x$ and $y$ are in the support of $\chi_r$,
we have
\begin{equation*} \begin{split}
\psi_\alpha^{int,r}(t,y) &= \int_{\rit^+}  {\partial_x^2 \widetilde G_\alpha^{int,r}(t,x,y) 
- \alpha^2 \widetilde G_\alpha^{int,r}(t,x,y) \over U_s''(x) }  \omega_\alpha^0(x) \, dx
\\ & =  \int_{\rit^+} \widetilde G_\alpha^{int,r}(t,x,y) \widetilde \omega_\alpha^0(x) \, dx
- [\partial_x \tilde G_{\alpha}^{int,r} (t,x,y)]_{x=y} \partial_x \Bigl(  {\omega_\alpha^0(y) \over U_s''(y)} \Bigr),
\end{split} \end{equation*}
where
$$
\widetilde \omega_\alpha^0 =  \partial_x^2 \Bigl({\omega_\alpha^0 \over U_s''}  \Bigr) 
- \alpha^2 {\omega_\alpha^0 \over U_s''} 
$$
and  $[\partial_x \tilde G_{\alpha}^{int,r}(t,x,y)]_{x=y}$ is the jump of $\partial_x \tilde G_\alpha^{int,r}(t,x,y)$ at $x = y$.
Using Lemma \ref{sins}, we have
$$
  \Bigl| \int_{\rit^+} \widetilde G_\alpha^{int,r}(t,x,y) \widetilde \omega_\alpha^0(x) \, dx \Bigr|
 \lesssim   {1 \over \langle \alpha  t \rangle^2}\int  {| \widetilde \omega_\alpha^0(x)| \over |x| + |y|}  \, dx
\lesssim   {|\log y | \over \langle \alpha t \rangle^2} .
$$
It remains to bound the jump term. As $W[\psi_{+,\alpha,c},\psi_{-,\alpha,c}] = 1$, we have
$$
[\partial_x \widetilde G_\alpha^{int,r}](t,y) = 
  \int_{\Gamma(0)} \chi_r(c)   e^{- i \alpha c t} \, dc = O(\langle \alpha t \rangle^{-2}).
$$
Bounds on $\partial_y \psi_\alpha^{int,r}$ are similar, but only lead to a decay in $\langle \alpha t \rangle^{-1}$.
\end{proof}


\subsection{Study of $G_\alpha^b$ and $\psi_\alpha^b$}


We now turn to the boundary layer parts $G_\alpha^b$ of $G_\alpha$ and $\psi_\alpha^b$ of $\psi_\alpha$.

\begin{lemma}
Under the assumptions (A1), (A2), (A3) and (A4),
$$
\psi_\alpha^b  = \psi_\alpha^{b,modes} + \psi_\alpha^{b,decay}
$$
where $\psi_\alpha^{b,modes}$ is the projection on $\sigma_P \cap \{c, \Im c \ge 0 \}$,
and 
\beq \label{decaypsib}
\| \psi_\alpha^b(t,\cdot) \|_{L^\infty} \lesssim \langle \alpha t \rangle^{-1} .
\eeq
\end{lemma}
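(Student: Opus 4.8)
The plan is to start from the contour–integral representation of $\psi_\alpha^b$, push the contour down onto the real $c$–axis while collecting the residues at the eigenvalues (which will constitute $\psi_\alpha^{b,modes}$), and estimate the remaining principal–value integral by a single integration by parts in $c$. Combining (\ref{expressGb}) with (\ref{G11}) and integrating against $\omega_\alpha^0$ gives
$$
\psi_\alpha^b(t,y)=-\frac{\alpha}{2\pi}\int_\Gamma e^{-i\alpha c t}\,\Psi^b_{\alpha,c}(y)\,dc,
\qquad
\Psi^b_{\alpha,c}(y):=-\,\frac{\psi_{+,\alpha,c}(0)}{\psi_{-,\alpha,c}(0)}\,\psi_{-,\alpha,c}(y)\,h_{\alpha,c},
$$
with $h_{\alpha,c}:=\int_0^{+\infty}\frac{\psi_{-,\alpha,c}(x)}{U_s(x)-c}\,\omega_\alpha^0(x)\,dx$. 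For $\Im c>0$ the map $c\mapsto\Psi^b_{\alpha,c}(y)$ is meromorphic, with poles exactly at the zeros of the dispersion function $c\mapsto\psi_{-,\alpha,c}(0)$, i.e. at the unstable eigenvalues; by Proposition~\ref{localization} and (A3)--(A4) it extends continuously up to $\Im c=0$ off the finite set $\{U_s(0),U_+\}\cup\{c_{extr}^l\}_l\cup\{c_{embedded}^k\}_k$, with a simple pole at each embedded eigenvalue and (as analysed below) only an integrable singularity at $U_s(0)$, at each $c_{extr}^l$, and at $U_+$. One then pushes $\Gamma$ — it suffices to treat its horizontal part $\Gamma_2(\eps)$ as in \S\ref{awayfrom}, the two oblique pieces being harmless — down to a contour equal to $\rit$ except for small upper semicircles around the $c_{embedded}^k$ and $U_+$, collecting the residues crossed. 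This produces the splitting $\psi_\alpha^b=\psi_\alpha^{b,modes}+\psi_\alpha^{b,decay}$ in which $\psi_\alpha^{b,modes}$ is the finite sum of residues at $\sigma_P\cap\{\Im c\ge0\}$ — a combination of Rayleigh eigenmodes, the unstable ones growing in $t$ and the simple embedded ones (by (A4)) purely oscillating — and $\psi_\alpha^{b,decay}(t,y)=-\frac{\alpha}{2\pi}\,\mathrm{p.v.}\!\int_{\rit} e^{-i\alpha c t}\,\Psi^b_{\alpha,c}(y)\,dc$.

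To bound $\psi_\alpha^{b,decay}$, fix $y$ and cut the integral with a partition of unity in $c$ subordinated to the bad set $\{U_s(0)=0,\ U_s(y),\ c_{extr}^l,\ c_{embedded}^k,\ U_+\}$. On the generic part, $\psi_{\pm,\alpha,c}$ are the solutions of Proposition~\ref{propfirst2}, and the usual Plemelj analysis of the simple pole $U_s(x)=c$ (using the $z\log z$ structure of $\psi_{\pm,\alpha,c}$) shows that $c\mapsto h_{\alpha,c}$, and hence $c\mapsto\Psi^b_{\alpha,c}(y)$, is $C^1$ with locally integrable derivative (at worst a logarithm at interior critical values, together with Hilbert–transform–bounded principal values); one integration by parts then gives $O(\langle\alpha t\rangle^{-1})$, and in fact — exactly as for $\psi_\alpha^{int,r}$ in \S\ref{awayfrom} — the interior critical–layer terms and the branch at $c=U_s(y)$ allow a second integration by parts and decay like $\langle\alpha t\rangle^{-2}$. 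Near $c_{embedded}^k$ (simple poles, already removed) and near $U_+$ (where all the functions involved decay exponentially in $y$, as in the analysis at infinity) the contribution is again $O(\langle\alpha t\rangle^{-1})$. The term that does not improve beyond $\langle\alpha t\rangle^{-1}$ is the one supported near $c=U_s(0)$: there the pole $U_s(x)=c$ of $h_{\alpha,c}$ sits at the \emph{endpoint} $x=0$ of the integration interval, so that $h_{\alpha,c}=C_\alpha(y)\log(c-i0)+(\text{bounded and }C^1)$ near $c=U_s(0)$, while $\psi_{+,\alpha,c}(0)/\psi_{-,\alpha,c}(0)$ and $\psi_{-,\alpha,c}(y)$ stay bounded there (by (A4); and if $U_s(0)\in\sigma_P$ its mode has been removed, $\psi_{-,\alpha,c}(0)$ vanishing simply, which still leaves a bounded quotient). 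Integrating $\int e^{-i\alpha ct}\log(c-i0)\,dc$ by parts once produces the bounded integral $\int e^{-i\alpha ct}(c-i0)^{-1}\,dc$ plus endpoint terms, hence $O(\langle\alpha t\rangle^{-1})$; a second integration by parts is impossible because $\int e^{-i\alpha ct}(c-i0)^{-2}\,dc$ grows linearly in $\alpha t$. This logarithmic boundary effect, absent in the periodic and whole–space cases, is precisely the one announced after Theorem~\ref{growthRayleigh}.

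The main obstacle will be the contribution of $c$ near an extremal velocity $c_{extr}^l$. There the pole $U_s(x)=c$ of $h_{\alpha,c}$ collides with the degenerate layer $y=y_{extr}^l$, and the three factors making up $\Psi^b_{\alpha,c}(y)$ are each controlled only by Proposition~\ref{propositiondeg} and may individually blow up like $|c-c_{extr}^l|^{-3/2}$, so naive absolute estimates yield at best $\langle\alpha t\rangle^{-1/2}$. The decay must come from genuine cancellation inside $\Psi^b_{\alpha,c}(y)$: the localization (\ref{bbbb3}) forces $\psi_{+,\alpha,c}(0)=O(|c-c_{extr}^l|^{3/2})$ (recall $0<y_{extr}^l$), and the sharp layer estimates (\ref{atplus1})--(\ref{atplus3}) — in particular the fact that away from $y_{extr}^l$ the singular solution is, to leading order, proportional to $U_s-c$, which cancels the factor $(U_s(x)-c)^{-1}$ inside $h_{\alpha,c}$ — combine to show that $c\mapsto\Psi^b_{\alpha,c}(y)$ is in fact continuous near $c_{extr}^l$, with a single integrable ($\log$–type) derivative, uniformly in $y$ up to a harmless factor $\Theta(y-y_{extr}^l)$. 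One integration by parts in $c$ near $c_{extr}^l$ then yields the $O(\langle\alpha t\rangle^{-1})$ bound, which would complete the proof. This extremal–velocity analysis, which rests squarely on the new description of the solutions of Rayleigh equation near the velocity extremum, is the technical heart of the argument.
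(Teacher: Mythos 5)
Your overall architecture is the same as the paper's: the representation $\psi^b_{\alpha,c}(y)=-\frac{\psi_{+,\alpha,c}(0)}{\psi_{-,\alpha,c}(0)}\,\psi_{-,\alpha,c}(y)\int_0^{\infty}\frac{\psi_{-,\alpha,c}(x)}{U_s(x)-c}\,\omega^0_\alpha(x)\,dx$ coming from (\ref{expressGb}), the contour pushed onto the real axis with the residues producing $\psi^{b,modes}_\alpha$ (unstable modes plus, via Plemelj, the simple embedded eigenvalues), a partition of unity in $c$, and integration by parts on each piece; your identification of the endpoint logarithm at $c=U_s(0)$ as the obstruction to a second integration by parts is consistent with (and more explicit than) the paper's remark that the $\langle\alpha t\rangle^{-1}$ rate is a boundary effect.

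The gap is in the step you yourself call the technical heart, the neighbourhood of an extremal velocity: there you only assert that the cancellations ``combine to show'' that $c\mapsto\psi^b_{\alpha,c}(y)$ is continuous with an integrable ($\log$-type) $c$-derivative, and the inputs you cite do not give this. From (\ref{bbbb3}) you get $\psi_{+,\alpha,c}(0)=O(|c-c_{extr}^l|^{3/2})$, but assumption (A3) literally only says that $\psi_{-,\alpha,c}(0)$ is bounded away from $0$; with $|\psi_{-,\alpha,c}(y)|\lesssim|c-c_{extr}^l|^{-3/2}$ for $y<y_{extr}^l$ and $h_{\alpha,c}=O(|c-c_{extr}^l|^{-3/2})$, this bookkeeping still allows $\psi^b_{\alpha,c}(y)$ to blow up like $|c-c_{extr}^l|^{-3/2}$, and even granting an $O(1)$ bound, each $c$-derivative costs a factor $|c-c_{extr}^l|^{-1}$ (cf.\ (\ref{atplussd1})--(\ref{atplussd4})), so mere continuity would leave a non-integrable first derivative. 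The missing ingredient, which is exactly what the paper uses, is the lower bound contained in Case I of the proof of Proposition \ref{localization}: under (A3), $|\psi_{-,\alpha,c}(0)|$ is \emph{exactly} of order $|c-c_{extr}^l|^{-3/2}$, whence $\psi_{+,\alpha,c}(0)/\psi_{-,\alpha,c}(0)=O(|c-c_{extr}^l|^{3})$ and $\psi^b_{\alpha,c}(y)=O(|c-c_{extr}^l|^{3/2})$ for $y<y_{extr}^l$ (and $O(|c-c_{extr}^l|^{3})$ for $y>y_{extr}^l$), i.e.\ the boundary term actually vanishes at $c_{extr}^l$ (this is also what links it to depletion). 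With this genuine smallness your integration by parts closes and the $\langle\alpha t\rangle^{-1}$ bound follows; without it the claimed integrable-derivative statement is unsupported.
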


\Remarks
Note that the boundary layer part of the velocity decays like the horizontal part of the velocity, namely like $\langle \alpha t \rangle^{-1}$ and not faster.
Moreover, $\psi_\alpha^{b,modes}$ is an explicit finite sum of exponentially increasing or oscillatory eigenmodes.

\begin{proof}
We recall that
$$
G_{\alpha,c}^{b} (x,y) =  -     { \psi_{+,\alpha,c}(0)  \over  \psi_{-,\alpha,c}(0) } {\psi_{-,\alpha,c}(x) \over U_s(x) - c}  \psi_{-,\alpha,c}(y),
$$
which leads to
$$
\psi_{\alpha,c}^b(y) = \int_{\rit^+} G_\alpha^b(x,y) \omega_\alpha^0(x) \, dx = {L_{\alpha,c}(\omega_\alpha^0) \over \psi_{-,\alpha,c}(0)} \psi_{-,\alpha,c}(y)
$$
where
$$
L_{\alpha,c}(\omega_\alpha^0) = - \int_{\rit^+} {\psi_{+,\alpha,c}(0) \psi_{-,\alpha,c}(x) \over U_s(x) - c} \omega_\alpha^0(x) \, dx.
$$
A simple computation shows that $\bar \psi_{-,\alpha,c} / (U_s(x) - \bar c)$ is in fact an eigenmode for the adjoint of Rayleigh operator, thus
$L_{\alpha,c}(\omega_\alpha^0) \psi_{-,\alpha,c}$ can be interpreted as the projection on the eigenmode $\psi_{-,\alpha,c}$ of Rayleigh operator.
We have  to study
$$
\psi^b_\alpha(t,y) = \int_\Gamma \psi_{\alpha,c}^b(y) e^{-i \alpha c t} dc .
$$
We note that the integrand of $\psi^b_{\alpha}$ is holomorphic outside $Range(U_s)$,
can be extended by continuity to $Range(U_s)$,
has poles at the various eigenvalues $c \in \sigma_P \cap \{c, \Im c > 0 \}$, and is not smooth at the embedded eigenvalues and at  $c = 0$.
We first move the contour $\Gamma$ "below" the eigenvalues. This leads to
\beq \label{decompGb}
\psi^b_\alpha = \psi^{b,unstable,+}_\alpha + \widetilde \psi^b_\alpha,
\eeq
where
$$
\psi^{b,unstable,+}_\alpha(t,y) = - 2 i \pi  \sum_{c \in \sigma_P, \Im c > 0}  
{L_{\alpha,c}(\omega_\alpha^0) \over \partial_c \psi_{-,\alpha,c}(0)} \psi_{-,\alpha,c}(y) e^{-i \alpha c t}
$$
is the projection on unstable modes and 
$$
\widetilde \psi^b_\alpha(t,y) = \int_{\Gamma_\eps} \psi_{\alpha,c}^b(y) e^{-i \alpha c t} d\alpha .
$$
We choose $\eps$  small enough, so that all the unstable eigenvalues are "above" $\Gamma_\eps$.
As previously, we introduce $\chi_r$, a partition of unity and $\widetilde \psi_{\alpha}^{b,r}$, defined by
$$
\widetilde \psi_\alpha^{b,r}(t,y) = \int_{\Gamma_\eps} \chi_r(c) \psi_{\alpha,c}^b(y) e^{-i \alpha c t} dc .
$$
If the support of $\chi_r$ contains no singularity of $L_{\alpha,c}(\omega_\alpha^0)$, then $| \widetilde \psi^b_\alpha(t,y) | = O(\langle \alpha t \rangle^{-2})$.
If it contains an embedded eingenvalue $c_{embed}^l$, then
$$
\lim_{\eps \to 0} \widetilde \psi_\alpha^{b,r}(t,y) = \psi_\alpha^{b,unstable,0}(t,y) + O(\langle \alpha t \rangle^{-1})
$$
where
$$
\psi_\alpha^{b,unstable,0}(t,y) = 
-  2 i \pi 
 {L_{\alpha,c_{embed}^l}(\omega_\alpha^0) \over \partial_c \psi_{-,\alpha,c_{embed}^l}(0)} \psi_{\alpha,c_{embed}^l}(y) 
e^{-i \alpha c_{embed}^l t} 
$$
can be interpreted as the projection  on $\sigma_{embed}^l \in \sigma_P \cap \{ c , \Im c = 0 \}$.

If the support of $\chi_r$ contains an extremal point $c_{extr}^l$, then, near this extremal point,
$\psi_{+,\alpha,c}(0)  \psi_{-,\alpha,c}^{-1}(0)$ is of order $O((c - c_{extr}^l)^3)$ and $\psi_{-,\alpha,c}(x)$ is bounded when $x$ is bounded.
Thus $L_{\alpha,c}(\omega_\alpha^0)  \psi_{-,\alpha,c}^{-1}(0)$ is of order $O((c - c_{extr}^l)^3)$.
In particular,
$\psi_{\alpha,c}^b(y)$ is of order $O((c - c_{extr}^l)^{3/2})$ if $y < y_{extr}^l$ and of order $O((c - c_{extr}^l)^{3})$ if $y > y_{extr}^l$.
In any case, it goes to $0$ as $c$ goes to $c_{extr}^l$.
Thus, there is no singularity at this point. On the contrary, $\psi_\alpha^{b,r}(t,y)$ is small, which is  linked to the depletion property
(see next section).

We define
$$
\psi_\alpha^{modes} = \psi_\alpha^{b,modes} = \psi_\alpha^{b,unstable,+} + \psi_\alpha^{b,unstable,0}
$$ 
and $\psi_\alpha^{b,decay}$ to be the sum of all the other terms, in such a way that 
$$
\psi_{\alpha}^b = \psi_\alpha^{b,decay} + \psi_\alpha^{b,modes}.
$$
Note that the modes only come from the boundary part $\psi_\alpha^b$.
We further define
$$
\psi_\alpha^{decay} = \psi_\alpha^{int} + \psi_\alpha^{b,decay}.
$$
 We note that
$$
 | \psi_\alpha^{b,decay} |  \lesssim \langle \alpha t \rangle^{-1},
$$
which ends the proof.
\end{proof}


\section{Behaviour of the  vorticity}



\subsection{Vorticity depletion}


We now turn to the proof of the vorticity depletion. 
We recall that $\omega_\alpha^{int}$ is the vorticity of $\psi_\alpha^{int}$
and that $\omega_\alpha^b$ is the vorticity of $\psi_\alpha^b$. We decompose $\omega_\alpha^b$ in
$\omega_\alpha^{b,modes} + \omega_\alpha^{b,decay}$. We define
$$
\zeta_\alpha^{int} = - (U_s - c) \omega_\alpha^{int}, \qquad
\zeta_\alpha^{b,decay} = - (U_s - c) \omega_\alpha^{decay}
$$
and
$$
\zeta_\alpha^{b,modes} = - (U_s - c) \omega_\alpha^{modes}.
$$
Using Rayleigh equation, we have
$$
\zeta_{\alpha}^{int}(y,c) = i \alpha U_s''(y) \psi_{\alpha,c}^{int}(y)  -  \omega_\alpha^0(y),
$$
$$
\zeta_{\alpha}^{b,decay}(y,c) = i \alpha U_s''(y) \psi_{\alpha,c}^{b,decay}(y)  
$$
and
$$
\zeta_{\alpha}^{b,modes}(y,c) = i \alpha U_s''(y) \psi_{\alpha,c}^{b,modes}(y)  .
$$
We next study $\zeta_\alpha^{int}(y,c)$ and $\zeta_\alpha^{b,decay}(y,c)$.

\begin{proposition}   \label{vanish} 
For any $1 \le l \le L$, we have
\beq \label{vanish1}
\zeta_{\alpha}^{int}(y_{extr}^l,c_{extr}^l) = 0.
\eeq
Moreover, for $y$ close to $y_{extr}^l$,
\beq \label{cancel}
| \zeta_{\alpha}^{int}(y,U_s(y))   
| \lesssim | y - y_{extr}^l |^2   \Bigl| \log | y - y_{extr}^l  | \Bigr|  
\eeq
and
\beq \label{depletionzetab}
| \zeta_\alpha^b(y,U_s(y)) | \lesssim | y - y_{extr}^l |^2.
\eeq
\end{proposition}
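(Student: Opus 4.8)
The plan is to reduce both estimates to the two solutions $\psi_{\pm,\alpha,c}$ near the extremal layer, for which Proposition~\ref{propositiondeg} and Proposition~\ref{localization} provide sharp bounds, and to use the two identities
\[
\zeta_\alpha^{int}(y,c)=i\alpha U_s''(y)\,\psi_{\alpha,c}^{int}(y)-\omega_\alpha^0(y), \qquad \zeta_\alpha^{b}(y,c)=i\alpha U_s''(y)\,\psi_{\alpha,c}^{b}(y),
\]
already derived from Rayleigh's equation. As $U_s''(y_{extr}^l)\neq0$ and $U_s''$ is continuous, on a fixed small neighbourhood of $y_{extr}^l$ it suffices to bound $\psi_{\alpha,c}^{b}(y)$ and $\psi_{\alpha,c}^{int}(y)-\omega_\alpha^0(y)/\big(i\alpha U_s''(y)\big)$ along the curve $c=U_s(y)$. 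The elementary but crucial remark is that, since $U_s'(y_{extr}^l)=0$ and $U_s''(y_{extr}^l)\neq0$, on this curve $|c-c_{extr}^l|=|U_s(y)-U_s(y_{extr}^l)|$ is of order $|y-y_{extr}^l|^2$, so such $y$ always falls in the regime $|y-y_{extr}^l|\lesssim|c-c_{extr}^l|^{1/2}$ of Proposition~\ref{propositiondeg}, where $|\psi_{-,\alpha,c}(y)|\lesssim|c-c_{extr}^l|^{-1/2}$ and $|\psi_{+,\alpha,c}(y)|\lesssim|c-c_{extr}^l|$.

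\emph{The boundary term} is the easy half. Substituting $\psi_{\alpha,c}^{b}(y)=\big(L_{\alpha,c}(\omega_\alpha^0)/\psi_{-,\alpha,c}(0)\big)\psi_{-,\alpha,c}(y)$ with $L_{\alpha,c}(\omega_\alpha^0)=-\psi_{+,\alpha,c}(0)\int_0^{+\infty}\frac{\psi_{-,\alpha,c}(x)}{U_s(x)-c}\omega_\alpha^0(x)\,dx$, one uses the estimate $|\psi_{+,\alpha,c}(0)/\psi_{-,\alpha,c}(0)|\lesssim|c-c_{extr}^l|^{3}$ (obtained above in the study of $\psi_\alpha^b$, via (\ref{decompo2}) and Proposition~\ref{propositiondeg}) together with $\int_0^{+\infty}\frac{|\psi_{-,\alpha,c}(x)|}{|U_s(x)-c|}|\omega_\alpha^0(x)|\,dx\lesssim|c-c_{extr}^l|^{-3/2}$ (the integral, a principal value since the contour is above the spectrum, is dominated by $x<y_{extr}^l$, where $|\psi_{-,\alpha,c}(x)|\lesssim|c-c_{extr}^l|^{-3/2}$). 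Hence $|L_{\alpha,c}(\omega_\alpha^0)/\psi_{-,\alpha,c}(0)|\lesssim|c-c_{extr}^l|^{3/2}$, and multiplying by $|\psi_{-,\alpha,c}(y)|\lesssim|c-c_{extr}^l|^{-1/2}$ yields $|\psi_{\alpha,c}^{b}(y)|\lesssim|c-c_{extr}^l|\sim|y-y_{extr}^l|^2$ on $c=U_s(y)$, which is (\ref{depletionzetab}).

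\emph{The interior term.} I would represent $\psi_{\alpha,c}^{int}(y)=\int_0^{+\infty}G_{\alpha,c}^{int}(x,y)\,\omega_\alpha^0(x)\,dx$ and split the $x$-integral at distance $\epsilon_0$ from $y_{extr}^l$, where $\epsilon_0$ is small enough that $(y_{extr}^l-\epsilon_0,\,y_{extr}^l+\epsilon_0)$ contains no other critical or extremal point. For the \emph{far} piece, $\psi_{+,\alpha,c}(x)=O(|c-c_{extr}^l|^{3/2})$ when $x<y_{extr}^l-\epsilon_0$ and $\psi_{-,\alpha,c}(x)=O(e^{-|\alpha|x})$ when $x>y_{extr}^l+\epsilon_0$; pairing these with the $\psi_{\mp,\alpha,c}(y)$ bounds of the first paragraph gives a contribution $O(|c-c_{extr}^l|)=O(|y-y_{extr}^l|^2)$. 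The \emph{near} piece is purely local: after the rescaling of Proposition~\ref{propositiondeg} it is evaluated with the explicit local solutions, starting from the model $\alpha=0$ formulas ((\ref{psi1def})--(\ref{Rayl3}), (\ref{psi2explicit})) and propagating to $\alpha\neq0$ through (\ref{iter00}). The contour above the spectrum crosses the two critical layers $x=y$ and $x=\tilde y=2y_{extr}^l-y+O(|y-y_{extr}^l|^2)$, and collecting the resulting principal-value integrals and residues one finds that the leading part of $i\alpha U_s''(y)\,\psi_{\alpha,U_s(y)}^{int}(y)$ equals $\omega_\alpha^0(y)$ exactly, while the remainder is $O\big(|y-y_{extr}^l|^2\,|\log|y-y_{extr}^l||\big)$, the logarithm coming from the logarithmic terms $\log\frac{y+\sqrt c}{y-\sqrt c}$ in the local solutions and from the principal-value integrals near the two coalescing critical layers. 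Combining the two pieces gives (\ref{cancel}), and setting $y=y_{extr}^l$ gives (\ref{vanish1}).

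The delicate point is this last \emph{near} computation: one must exhibit the exact cancellation producing $\omega_\alpha^0(y)$ and control the remainder at the sharp order $|y-y_{extr}^l|^2|\log|y-y_{extr}^l||$, which requires careful bookkeeping of the residue and principal-value contributions of the two nearly-coalescing critical layers and of the logarithmic structure of the local solutions, taking care not to lose powers of $|y-y_{extr}^l|$. Everything else is a routine application of the bounds in Propositions~\ref{propositiondeg} and \ref{localization}.
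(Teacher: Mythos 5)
Your treatment of the boundary term is acceptable and essentially the paper's: on the curve $c=U_s(y)$ one has $|c-c^l_{extr}|\sim|y-y^l_{extr}|^2$, and the smallness of $\psi^{b}_{\alpha,c}$ near an extremal value (coming from $\psi_{+,\alpha,c}(0)/\psi_{-,\alpha,c}(0)=O(|c-c^l_{extr}|^{3})$ and the localization estimates) gives (\ref{depletionzetab}); your absolute-value bound on $\int\psi_{-,\alpha,c}(x)\omega^0_\alpha(x)/(U_s(x)-c)\,dx$ is written too casually (the critical-layer singularity forbids putting absolute values inside), but this is a repairable technicality of the same kind the paper itself glosses over.

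The interior estimates (\ref{vanish1})--(\ref{cancel}), however, are the heart of the proposition, and there your proposal has a genuine gap: the "near piece" is exactly where the cancellation producing $\omega^0_\alpha(y)$ and the sharp remainder $|y-y^l_{extr}|^2\,|\log|y-y^l_{extr}||$ must be exhibited, and you do not carry out that computation — you assert it and yourself flag it as the delicate point. The route you sketch (direct Sokhotski--Plemelj/residue bookkeeping for the kernel $\psi_{\pm,\alpha,c}(x)/(U_s(x)-c)$ at the two nearly coalescing critical layers $x=y$ and $x=\tilde y$, using the explicit local solutions) is substantially harder than what the paper does and is not obviously controllable at the claimed order without losing powers of $|y-y^l_{extr}|$; moreover there is no contour in $c$ here at all, since $\zeta^{int}_\alpha(y,c)$ is a fixed-$c$ (resolvent-level) quantity evaluated at $c=U_s(y)$, so the language of "the contour above the spectrum crossing the critical layers" conflates this proposition with the later time-dependent analysis. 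The paper's key idea, which is missing from your proposal, is to remove the singular denominator altogether by using Rayleigh's equation near the extremal point, where $U_s''\neq 0$: $\psi_{\pm,\alpha,c}(x)/(U_s(x)-c)=(\partial_x^2-\alpha^2)\psi_{\pm,\alpha,c}(x)/U_s''(x)$ (formula (\ref{firstexpression2})). One then writes the near part of $\psi^{int}_{\alpha,c}(y)$ with the regularized kernel $\widetilde G^{int}_{\alpha,c}$ of (\ref{tildeGac}) and integrates by parts twice in $x$; the unit-Wronskian jump of $\partial_x\widetilde G^{int}_{\alpha,c}$ at $x=y$ produces exactly the term that cancels $\omega^0_\alpha(y)$ in $\zeta^{int}_\alpha$, and the leftover integral (\ref{formula}) is then estimated piece by piece, using the localization bounds (\ref{1})--(\ref{3}) on $\psi_{\pm}$ in the three regions separated by $y_\pm(c)$, at the order $|y-y^l_{extr}|^2|\log|y-y^l_{extr}||$ once $c=U_s(y)\sim |y-y^l_{extr}|^2$. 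Without this (or an actually executed substitute for it), your argument establishes neither the exact vanishing (\ref{vanish1}) nor the sharp bound (\ref{cancel}).
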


\Remark
We do not need to study $\omega_\alpha^{b,modes}$ since it is explicit, as a sum of modes. 
Moreover, $\omega_\alpha^{b,modes}$ in  general does not satisfy (\ref{cancel}).

\begin{proof}

Near an extremal value $c_{extr}^l$, 
we already know that $\psi_\alpha^{b,decay}$ is of order  $(c - c_{extr}^l)^{3/2}$, which gives (\ref{depletionzetab}).

We now turn to $\zeta_\alpha^{int}$.
Up to a translation of $y$ and $c$, we assume that $y_{extr}^l = 0$ and $c_{extr}^l = 0$ to alleviate the formulas.
Note that $U_s''(0) \ne 0$, thus, on some interval $[-\eps,+\eps]$, $U_s''(y)$ is bounded away from $0$. Let
$\chi(x)$ be a smooth function supported in $[-\eps,+\eps]$,  which equals $1$ on $[-\eps/2,+\eps/2]$.
Then, for $\Im c > 0$,
\beq \label{expepe}
\psi_{\alpha,c}^{int}(y) = \int \chi(x) G_{\alpha,c}^{int}(x,y) \omega_\alpha^0(x) \, dx 
+ \int (1 - \chi(x)) G_{\alpha,c}^{int}(x,y) \omega_\alpha^0(x) \, dx.
\eeq
Let  $N_1$ and $N_2$ be respectively the first and second integrals.
In $N_1$, the integrand is as singular as $\psi_{\pm,\alpha,c} (x)/ (U_s(x) - c)$. 
We now follow the lines of section \ref{awayfrom}. To describe this singularity, as $U_s''(x)$ does not vanish, we rewrite 
it under the form
\beq \label{firstexpression2}
{\psi_{\pm,\alpha,c}(x) \over U_s(x) - c} = {\partial_x^2 \psi_{\pm,\alpha,c}(x) - \alpha^2 \psi_{\pm,\alpha,c}(x) \over U_s''(x) } .
\eeq
For $\Im c > 0$, this leads to
$$
N_1 = \int_{-\eps}^{+\eps} \chi(x) {(\partial_x^2 - \alpha^2) \widetilde G_{\alpha,c}(x,y) \over U_s''(x)} \omega_\alpha^0(x) \, dx,
$$
where
\beq \label{tildeGac}
 \widetilde G_{\alpha,c}^{int}(x,y) =
 \Bigl\{ \begin{array}{c}
 \psi_{-,\alpha,c}(y) \psi_{+,\alpha,c}(x) \qquad \hbox{if} \qquad y > x \cr
 \psi_{-,\alpha,c}(x) \psi_{+,\alpha,c}(y) \qquad \hbox{if} \qquad y < x . \cr
\end{array} 
\eeq
Integrating by parts, and using that $\partial_x \widetilde G_{\alpha,c}$ has a jump at $x = y$ which equals $-1$, we obtain
$$
N_1 = - {i \over \alpha}\chi(y) \omega_\alpha^0(y) + \int_{-\eps}^{+\eps} \widetilde G_{\alpha,c}(x,y) \theta(x) \, dx
$$
where
$$
\theta(x) = (\partial_x^2 - \alpha^2) \Bigl({ \chi \omega_\alpha^0 \over U_s''} \Bigr).
$$
We have
\beq \label{formula}
 \int_{-\eps}^{+\eps} \widetilde G_{\alpha,c}(x,y) \theta(x) \, dx 
 = \psi_{-,\alpha,c}(y) \int_{-\eps}^y \psi_{+,\alpha,c}(x) \theta(x) \, dx
 \eeq
 $$
 + \psi_{+,\alpha,c}(y) \int_y^{+ \eps} \psi_{-,\alpha,c}(x) \theta(x) \, dx.
 $$
 Let $I_1$ and $I_2$ be respectively the first and  second integrals on the right hand side of the above equality.
 Let us assume that $y > y_+$ to fix the ideas, the cases $y < y_-$ and $y_- < y < y_+$ being similar.
 
 Let us first bound $I_1$. 
For $x < y_-$,  $|\psi_{+,\alpha,c}(x) | \lesssim | c|^{3/2} | x|^{-1}$, 
thus the corresponding contribution to the integral is bounded by $|c|^{3/2} \log |y|$. 
Between $y_-$ and $y_+$, $| \psi_{+,\alpha,c}(x) | \lesssim |c|$, leading to a contribution of order $|c|^{3/2}$ to the integral.
For  $y_+ < x <y$, $| \psi_{+,\alpha,c}(x) | \lesssim |x|^2$, thus the integral between $y_+$ and $y$ is bounded by $|y|^3$.
As $|\psi_{-,\alpha,c}(y) | \lesssim |y|^{-1}$,
$$
| I_1 | \lesssim | \psi_{-,\alpha,c}(y) | \Bigl[ | c |^{3/2} |\log |y|| + |c|^{3/2}  \Bigr] 
\lesssim |c|^{3/2} |y|^{-1} |\log |y| | + |c|^{3/2} |y|^{-1} + |y|^2.
$$
Now, as $y > 0$, $| \psi_{+,\alpha,c}| \lesssim y^2$, and for $x > y$, $| \psi_{-,\alpha,c}(x) | \lesssim |x|^{-1}$, thus
$$
| I_2 | \lesssim y^2 \log y.
$$
 For $c = U_s(y)$, namely of order $y^2$, all these terms are bounded by $y^2 \log y$.

Let us turn to $N_2$.
For $x < - \eps/2$, $\psi_{+,\alpha,c}(x)$ is of order $O(c^{3/2})$, thus the computations are similar,
 and for $x > \eps$, $|\psi_{+,\alpha,c} (y) | \lesssim   y^2$
and $| \psi_{-,\alpha,c}(x) |$ is bounded.
Combining these estimates, we get
$$
\Bigl| \psi_{\alpha,c}^{int}(y) + {i \over \alpha} \omega_\alpha^0(y)  \Bigr| \lesssim y^2 | \log y |,
$$
which is exactly (\ref{cancel}). This implies (\ref{vanish1}) and ends the proof.
\end{proof}


\subsection{Asymptotic behavior of $\omega_\alpha^{int}(t,y)$}


We now turn to the asymptotic behaviour of the vorticity $\omega_\alpha^{int}(t,y)$.  
The proof follows the lines of \cite{Bouchet-Morita-2010}. Using Rayleigh equation, we have
\begin{equation*} \begin{split}
\omega_\alpha^{int}(t,x) &= {1 \over 2 i \pi} \int_\Gamma {\zeta_\alpha^{int}(y,c) \over U_s(y) - c} e^{-i \alpha c t} dc
\\ &= {1 \over 2 i \pi} \int_\Gamma {\zeta_\alpha^{int}(y,U_s(y)) \over U_s(y) - c} e^{-i \alpha c t}  dc
\\ &+ {1 \over 2 i \pi} \int_\Gamma {\zeta_\alpha^{int}(y,c) - \zeta_\alpha^{int}(y,U_s(y)) \over U_s(y) - c} e^{-i \alpha c t}  dc
\\& = \omega_\alpha^\infty(y)  e^{- i \alpha U_s(y) t} + R(t,y),
\end{split} \end{equation*}
where we define $\omega_\alpha^\infty$ by
$$
\omega_\alpha^\infty(y) = \zeta_\alpha^{int}(y,U_s(y))
$$ 
and  $R(t,y)$ is the  integral term. Let us now bound $R(t,y)$. 
We note that
$$
\zeta_\alpha^{int}(y,c) - \zeta_\alpha^{int}(y,U_s(y))  = i \alpha U_s''(y) \Bigl( \psi_{\alpha,c}^{int}(y) - \psi_{\alpha,U_s(y)}^{int}(y) \Bigr) .
$$
The speed of decay of $R(t,y)$ thus depends on the smoothness of $\psi_{\alpha,c}^{int}(y)$ viewed as a function of $c$.

We note that $\psi_{\alpha,c}^{int}(y)$ is a $C^\infty$ function of $c$ provided $c$ is not an extremal value.
If $c$ is close to an extremal value $c_{extr}^l$, $\psi_{\alpha,c}^{int}(y)$ is still smooth provided $y$ is away from  $y_{extr}^l$
Hence, we just need to focus on the smoothness of $\psi_{\alpha,c}^{int}(y)$ for $c$ close to $c_{extr}^l$ and $y$ close to $y_{extr}^l$.
Up to a translation, we may assume $c_{extr}^l = 0$ and $y_{extr}^l = 0$.

In this latest case, we can use the explicit formula (\ref{formula}).
Moreover,  $\partial_c^2 \widetilde G_{\alpha,c}(x,y)$ has already been computed during the proof of Lemma \ref{sins}, where we proved
$\| \partial_c^2 \widetilde G_{\alpha,c}(x,y) \|_{L^1_c} \lesssim (|x| + |y|)^{-1}$. Using (\ref{expepe}), we obtain that
$\| \psi_{\alpha,c}^{int}(x,y) \|_{L^1_c} \lesssim | \log |y| |$. This implies that
$$
| R(t,y) | \lesssim {| \log |y| | \over \langle \alpha t \rangle} .
$$
This gives (\ref{omegaremain}) and ends the study of $\omega^{remain}(t,y)$.


\section{Appendix}


We first recall the classical Sokhotski–Plemelj theorem.

\begin{theorem}  \label{Plemelj} ( Sokhotski-Plemelj theorem) \\
Let $\phi$ be a smooth function defined on the real line, with compact support. Then
$$
\lim_{\eps \to 0^+} \int_{\rit} {\phi(x) \over x - i \eps} dx = i \pi \phi(0) + PV \int_\rit {\phi(x) \over x} dx,
$$ 
where $PV$ denotes the principal value of the integral.
\end{theorem}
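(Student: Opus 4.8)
The plan is to prove the identity by the classical Poisson-kernel and symmetrization argument, splitting $\dfrac{1}{x-i\eps}$ into its real and imaginary parts. First I would write, for $\eps > 0$,
$$
\frac{1}{x - i\eps} = \frac{x + i\eps}{x^2 + \eps^2} = \frac{x}{x^2+\eps^2} + i\,\frac{\eps}{x^2+\eps^2},
$$
so that the integral in question is the sum of a ``real'' piece $A(\eps) = \int_\rit \frac{x\,\phi(x)}{x^2+\eps^2}\,dx$ and $i$ times an ``imaginary'' piece $B(\eps) = \int_\rit \frac{\eps\,\phi(x)}{x^2+\eps^2}\,dx$. It then suffices to show $B(\eps) \to \pi\,\phi(0)$ and $A(\eps) \to PV\!\int_\rit \phi(x)/x\,dx$ as $\eps \to 0^+$.

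For $B(\eps)$ I would perform the change of variables $x = \eps u$, which turns it into $\int_\rit \frac{\phi(\eps u)}{u^2+1}\,du$. Since $\phi$ is bounded, the integrand is dominated by $\|\phi\|_{L^\infty}/(u^2+1) \in L^1(\rit)$ and converges pointwise to $\phi(0)/(u^2+1)$; dominated convergence gives $B(\eps) \to \phi(0)\int_\rit \frac{du}{u^2+1} = \pi\,\phi(0)$.

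For $A(\eps)$ the key point is that $x \mapsto x/(x^2+\eps^2)$ is odd, so, folding the integral onto the half-line,
$$
A(\eps) = \int_0^{+\infty} \frac{x\,\bigl(\phi(x) - \phi(-x)\bigr)}{x^2+\eps^2}\,dx .
$$
Because $\phi$ is smooth, $\phi(x) - \phi(-x) = O(x)$ near $0$, hence $(\phi(x)-\phi(-x))/x$ is bounded near the origin and, $\phi$ having compact support, lies in $L^1(0,+\infty)$; moreover $x/(x^2+\eps^2) \le 1/x$ for $x > 0$, so the integrand is dominated by $|\phi(x)-\phi(-x)|/x$, an $\eps$-independent $L^1$ function. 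Dominated convergence then yields
$$
A(\eps) \longrightarrow \int_0^{+\infty} \frac{\phi(x)-\phi(-x)}{x}\,dx = \lim_{\delta \to 0^+}\int_{|x|>\delta}\frac{\phi(x)}{x}\,dx = PV\!\int_\rit \frac{\phi(x)}{x}\,dx,
$$
where the middle equality is the definition of the principal value combined with the same $O(x)$ cancellation. Adding the two limits gives the claimed formula.

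The computation is entirely elementary; the only place requiring care is the real part $A(\eps)$, where the integrand $x\phi(x)/(x^2+\eps^2)$ is not absolutely integrable uniformly in $\eps$ unless one first exploits the oddness. So the symmetrization step, together with the verification that $(\phi(x)-\phi(-x))/x$ is a legitimate $\eps$-independent dominating function, is the crux; everything else is a direct application of the dominated convergence theorem.
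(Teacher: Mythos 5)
Your proof is correct and follows essentially the same route as the paper: split $1/(x-i\eps)$ into its real and imaginary parts, obtain $\pi\phi(0)$ from the Poisson-kernel piece via the substitution $x=\eps u$, and extract the principal value from the real piece by exploiting the oddness of $x/(x^2+\eps^2)$. The only cosmetic difference is that the paper realizes the cancellation by writing $\phi(x)=\phi(0)+x\psi(x)$ and using $\int_{-A}^{A}\frac{x}{x^2+\eps^2}\,dx=0$, whereas you fold the integral onto the half-line via $\phi(x)-\phi(-x)$; these are the same cancellation in different clothing.
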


\begin{proof}
We write
$$
{1 \over x - i \eps} = {x + i \eps \over x^2 + \eps^2} .
$$
We then have, as $\eps \to 0^+$,
\beq \label{imag}
\int_{\rit} {\eps \over x^2 + \eps^2} \phi(x) \, dx = \int_\rit {\phi(\eps y) \over y^2 + 1} dy \to \pi \phi(0).
\eeq
Moreover, as $\phi$ has a compact support, $\phi(x) = 0$ outside some interval of the form $[-A,+A]$. Let $\psi(x)$ be defined by
$\phi(x) = \phi(0) + x \psi(x)$. Then $\psi$ is smooth and we have
\beq \label{real}
\int_\rit {x \over x^2 + \eps^2} \phi(x) dx = \int_{-A}^{+A} {x \over x^2 + \eps^2}  \phi(0) dx
+ \int_{-A}^{+A} {x^2 \over x^2 + \eps^2} \psi(x) dx.
\eeq
The first term equals $0$ and the limit of the second is exactly the principal value.
\end{proof}

From Theorem \ref{Plemelj}, we can get the following results.

\begin{proposition}
Let $\phi$ be a smooth function defined on the real line with compact support. Then it holds that
\beq \label{ple1}
\lim_{\eps \to 0^+} \int_{\rit} {\phi(x)  e^{i x t} \over x - i \eps} dx = 2 i \pi \phi(0) + O(\langle t \rangle^{-2}) ,
\eeq
\beq \label{ple2}
\lim_{\eps \to 0^+} \int_{\rit} \phi(x)  e^{i x t}  \log(x - i \eps) dx = { 2 \pi \over t} \phi(0) + O(\langle t \rangle^{-2}) ,
\eeq
\beq \label{ple3}
\lim_{\eps \to 0^+} \int_{\rit} \phi(x)  e^{i x t}  (x - i \eps) \log(x - i \eps) dx = O(\langle t \rangle^{-2}) .
\eeq\end{proposition}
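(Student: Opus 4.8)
The plan is to derive all three identities from the Sokhotski--Plemelj theorem (Theorem \ref{Plemelj}) together with the elementary fact that the Fourier transform of a smooth, compactly supported function decays faster than any power of $t$. The organizing observation is that (\ref{ple2}) follows from (\ref{ple1}) by one integration by parts, and (\ref{ple3}) follows from (\ref{ple2}) and (\ref{ple1}) by one more; so the actual content lies in (\ref{ple1}). Throughout one assumes $t>0$ (the case $t<0$ is entirely analogous, and for $t$ in a bounded set every claim is vacuous).

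For (\ref{ple1}) I would first apply Theorem \ref{Plemelj} to the smooth, compactly supported function $x\mapsto\phi(x)e^{ixt}$, obtaining $\lim_{\eps\to0^+}\int_\rit \phi(x)e^{ixt}/(x-i\eps)\,dx = i\pi\phi(0) + PV\!\int_\rit \phi(x)e^{ixt}/x\,dx$, so it remains to show the principal value equals $i\pi\phi(0)+O(\langle t\rangle^{-2})$. Fix a bump $\chi$ that is $\equiv 1$ near $0$ and write $\phi=\phi(0)\chi+x\,g$ with $g$ smooth and compactly supported; the $x\,g$ piece contributes $\int g(x)e^{ixt}\,dx$, which is faster than any power of $t$. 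For the $\phi(0)\chi$ piece I would compare with the Dirichlet integral $PV\!\int_\rit e^{ixt}/x\,dx=i\pi\,\mathrm{sgn}(t)$: the difference $\int_\rit (1-\chi(x))e^{ixt}/x\,dx$ involves the function $(1-\chi(x))/x$, which is smooth (it vanishes near $0$) and decays like $1/x$ at infinity, so two integrations by parts — legitimate since the boundary terms vanish at $\pm\infty$ and each differentiation improves the decay of the amplitude by one power of $x$ — produce $O(\langle t\rangle^{-2})$. This gives (\ref{ple1}). Alternatively one can bypass the principal value altogether by using $1/(x-i\eps)=i\int_0^\infty e^{-\eps s}e^{-ixs}\,ds$, which turns the integral into $i\int_{-t}^{\infty}\widehat\phi(u)\,du$; this tends to $2\pi i\,\phi(0)$ by Fourier inversion, with remainder $i\int_{-\infty}^{-t}\widehat\phi$ that decays faster than any power since $\widehat\phi$ is rapidly decreasing.

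For (\ref{ple2}), fix $\eps>0$; since $x-i\eps$ stays off the branch cut, $\phi(x)\log(x-i\eps)$ is smooth with compact support, and integrating by parts against $e^{ixt}$ gives $\int_\rit \phi(x)e^{ixt}\log(x-i\eps)\,dx = -\tfrac{1}{it}\int_\rit (\phi'(x)\log(x-i\eps)+\phi(x)/(x-i\eps))\,e^{ixt}\,dx$. Letting $\eps\to0^+$, the summand $\int \phi(x)e^{ixt}/(x-i\eps)\,dx$ is handled by (\ref{ple1}) and yields the $1/t$ main term; the summand $\int \phi'(x)e^{ixt}\log(x-i\eps)\,dx$ is of the same type as (\ref{ple2}) with $\phi'$ in place of $\phi$, and one further integration by parts bounds it by $O(\langle t\rangle^{-1})$ using only that $\log(x-i\eps)$ is locally integrable uniformly in $\eps$ and that (\ref{ple1}) applies to $\phi'$ — so after the prefactor $1/t$ it contributes $O(\langle t\rangle^{-2})$. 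For (\ref{ple3}) one uses that $(x-i\eps)\log(x-i\eps)$ is $C^1$ with derivative $\log(x-i\eps)+1$; integrating by parts against $e^{ixt}$ reduces $\int_\rit \phi(x)e^{ixt}(x-i\eps)\log(x-i\eps)\,dx$ to $1/t$ times the sum of a type-(\ref{ple3}) integral with $\phi'$ (which is $O(\langle t\rangle^{-1})$ after one more integration by parts and the uniform local integrability of the logarithm), a type-(\ref{ple2}) integral (which is $O(\langle t\rangle^{-1})$ by the previous step), and $\int \phi(x)e^{ixt}\,dx$ (which is faster than any power); hence the total is $O(\langle t\rangle^{-2})$, with no surviving term of order $1/t$.

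The main obstacle is mild but should be isolated: after the reductions one is never integrating a smooth \emph{compactly supported} function against $e^{ixt}$, but rather functions such as $(1-\chi(x))/x$ or $\phi'(x)\log(x-i\eps)$ that are only $O(1/x)$ at infinity or mildly singular (uniformly in $\eps$) at the origin. One must check that integration by parts still gains a full power of $t$ in each such case — it does, because the successive amplitude derivatives $1/x^2, 1/x^3,\dots$ remain integrable and the logarithm is locally $L^1$ uniformly as $\eps\to0^+$ — and that the interchange of $\lim_{\eps\to0^+}$ with these integrations by parts is justified by dominated convergence, the dominating functions being $|\log|x||+\pi$ and the integrable amplitudes just mentioned.
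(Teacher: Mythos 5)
Your proposal is correct and follows essentially the same route as the paper: for (\ref{ple1}) it isolates the Sokhotski--Plemelj contribution $i\pi\phi(0)$ and reduces the remaining principal value to the Dirichlet integral after splitting off $\phi(0)$ near the origin (the smooth remainder giving rapid decay), and it then deduces (\ref{ple2}) and (\ref{ple3}) by integration by parts, exactly as the paper does. Note only that this route, like the paper's own, actually yields $-\tfrac{2\pi}{t}\phi(0)$ as the leading term of (\ref{ple2}) for $t>0$, so the sign printed in the statement appears to be a typographical slip rather than a defect of your argument.
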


\begin{proof}
We first prove (\ref{ple1}). First (\ref{imag}) leads to a first term $i \pi \phi(0)$.
Then
$$
\int_\rit {x \over x^2 + \eps^2} \phi(x) e^{i x t} dx = \phi(0) \int_\rit {x e^{i x t} \over x^2 + \eps^2} dx + \int {x^2 \over x^2 + \eps^2} \psi(x) e^{i x t} dx.
$$
As $\eps$ goes to $0$, the last integral converges to the integral of $\psi(x) e^{ixt}$ which is in particular smaller than $\langle t \rangle^{-2}$.
Moreover,
$$
\lim_{\eps \to 0^+} \int_\rit {x e^{i x t}  \over x^2 + \eps^2} dt = 2 i \lim_{\eps \to 0^+} \int_\rit {x \over x^2 + \eps^2} \sin(x) dx = i \pi,
$$
which gives the formula.
Equality (\ref{ple2}) can be deduced from (\ref{ple1}) after an integration by parts, and  (\ref{ple2}) implies  (\ref{ple3}) after
an integration by parts.
\end{proof}


{\bf Acknowledgments}  D. Bian is supported by NSF of China under the Grant 12271032.



\end{document}